\newtheorem{theorem}{Theorem} %%%[section]
\newtheorem{corollary}{Corollary}
\newtheorem{lemma}{Lemma} %%%[theorem]
\newtheorem{proposition}{Proposition} %%%[theorem]
\theoremstyle{definition}
\newtheorem{assumption}{Assumption}
\newtheorem{example}{Example}
\newcommand{\E}{\mathbb{E}}
\newcommand{\ep}{\mathbb{E}}
\newcommand{\R}{\mathbb{R}}
\renewcommand{\P}{\mathbb{P}}
\newcommand{\pr}{\mathbb{P}}
\newcommand{\diag}{\mbox{diag}}
\newcommand{\bs}{\boldsymbol}
\renewcommand{\hat}{\widehat}
\renewcommand{\tilde}{\widetilde}
\definecolor{yuancolor2}{rgb}{0, 0, 0}
\definecolor{yuancolor}{rgb}{0, 0, 0}
\newcommand{\yuan}[1]{{\color{yuancolor}#1}}
\begin{document}
\title{Asymptotic theory in network models with covariates and a growing number of node parameters}

\author{Qiuping Wang\thanks{Both authors  contributed equally to this work.} \thanks{School of Mathematics and Statistics,
        Zhaoqing University,
        Zhaoqing  526061, China.
\texttt{Email:} qp.wang@mails.ccnu.edu.cn.}
\hspace{5mm}
Yuan Zhang$^*$\thanks{Department of Statistics, The Ohio State University, Columbus, 43210, U.S.A.
\texttt{Email:} yzhanghf@stat.osu.edu}
\hspace{5mm}
Ting Yan\thanks{Department of Statistics, Central China Normal University, Wuhan, 430079, China.
\texttt{Email:} tingyanty@mail.ccnu.edu.cn.}
\\
$^*$Zhaoqing University,  \\
$^\dag$The Ohio State University, \\
$^\ddag$Central China Normal University
}
\date{}

\maketitle

\begin{abstract}
We \yuan{propose} a general model \yuan{that jointly characterizes} degree heterogeneity and homophily in weighted, undirected networks.
We present \yuan{a} moment estimation \yuan{method using} node degrees and \yuan{homophily statistics}.
\yuan{We establish consistency and asymptotic normality of our estimator using novel analysis.
We apply our general framework to three applications, including both exponential family and non-exponential family models.  Comprehensive numerical studies and a data example also demonstrate the usefulness of our method.
}
% {When all the parameters are bounded}, we have $\|\widehat\beta-\beta\|_\infty=O_p\left( n^{-1/2} \right)$ and $\|\widehat{\gamma} - \gamma\|_\infty=O_p\left( n^{-1} \right)$, up to a logarithm factor.
% Further, we derive a asymptotic representation of the moment estimator, based on which, we derive their asymptotic normal distributions
% under classical CLT conditions.
% Three applications are provided to illustrate the unified theoretical result.
% Numerical studies and a real data analysis demonstrate our theoretical findings.

\vskip 5 pt \noindent
\textbf{Key words}:   \yuan{$\beta$-model; degree heterogeneity; network homophily; network method of moments}\\

{\noindent \bf Mathematics Subject Classification:} 	62F12, 91D30.
\end{abstract}

\vskip 5pt

%%Running title: Undirected network models

\section{Introduction}

\yuan{Jointly modeling a network and nodal or edge-wise coviarates has long been an interesting problem.
One natural idea is to extend a widely-used network model to incorporate covariates.
For example, \citep{yang2013community, zhang2016community, binkiewicz2017covariate} introduce nodal covariates into a stochastic block model (SBM), which captures the clustering structure in networks.  In this paper, we will study the extension of another network model, called $\beta$-model \citep{Rinaldo2013, Yan:Xu:2013, Yan:Qin:Wang:2015, Yan-Jiang-Fienberg-Leng2018, chen2021analysis, zhang2021L2} that characterizes a different important aspect of network data, namely, \emph{degree heterogeneity} \citep{Cho:2011}.  The degree (total number of connections) of a node provides important profiling information about its structural role in the network \citep{borgatti2000models,zhang2020edgeworth, maugis2020central}.  A famous example is that \citet{babai1980random} shows that efficient graph matching can usually succeed with high probability between two shuffled random graphs, using a degree-based algorithm.  The $\beta$-model, named by \citet{Chatterjee:Diaconis:Sly:2011}, is an undirected, binary network:
\begin{equation}
    \pr(a_{ij}=1) = \dfrac{e^{\beta_i+\beta_j}}{1+e^{\beta_i+\beta_j}},
    \quad
    a_{ij}=a_{ji},
    \quad
    1\leq i<j\leq n
    \label{model::original-beta-model}
\end{equation}
where $A=(a_{ij}), 1\leq \{i,j\}\leq n$ is a binary adjacency matrix.  Later, \citet{Yan:Qin:Wang:2015, fan2022asymptotic} extend this model to weighted networks with edge distributions including Poisson, geometric, exponential and so on.  This paper will generalize \citet{Yan:Qin:Wang:2015} rather than the original binary edge $\beta$-model.

What we incorporate into a weighted $\beta$-model are \emph{edge-wise covariates}.  We notice that this set up also accommodates nodal covariates (e.g. immutable characteristics such as gender, race and genetic features; and/or mutable ones, including location, occupation and hobbies) since they can be easily transformed into edge-wise similarity/dissimilarity measures.  According to \citet{Graham2017}, this part of the data encodes the \emph{homophily} effect in network formation.  As a quick illustration, consider two node pairs $(i_1,i_2)$ and $(j_1,j_2)$.  Even if $\{\beta_{i_1},\beta_{i_2}\}$ is very different from $\{\beta_{j_1},\beta_{j_2}\}$, their edge expectations $\ep[a_{i_1,i_2}]$ and $\ep[a_{j_1,j_2}]$ might not differ too much, if they have similar edge covariates $z_{i_1,i_2}\approx z_{j_1,j_2}$.

Jointly modeling both degree heterogeneity and homophily, as well as developing effective estimation and inference methods along with supporting theory, is an interesting challenge.  As aforementioned, covariate-assisted stochastic block models have been comparatively well-studied, whereas few works exist to extend the $\beta$-model.  Among notable exceptions, \citet{Graham2017} generalizes \eqref{model::original-beta-model} by extending $\beta_i+\beta_j$ to $\beta_i+\beta_j+z_{ij}^T\gamma$ and devises a likelihood-based method for estimation and inference.
Recently, independent works \citet{stein2020sparse,stein2021sparse} further introduce $\ell_1$ regularization to the joint model for undirected and directed networks, respectively.
Both research groups \citet{Graham2017} and \citet{stein2020sparse,stein2021sparse} focus exclusively on binary edges for cleanness.
On the other hand, many networks, such as communications, co-authorship, brain activities and others, have weighted edges.

In this paper, we develop a general joint model for weighted edges.  Different from the likelihood-based approaches in \citet{Graham2017} and \citet{stein2020sparse,stein2021sparse}, we propose and analyze a method-of-moment parameter estimation.  As discussed later in the paper, moment method has its unique advantage in addressing slightly dependent network edge formation -- despite this paper exclusively focuses on independent edge generation, we understand that a comprehensive study here paves the road towards successfully handling the very challenging problem of dependent edges.
}

\yuan{
 We develop a two-stage Newton method that first finds an error bound for $\| \widehat{\beta} - \beta\|_\infty$ for a fixed $\gamma$ via establishing the convergence rate of  the Newton iterative sequence
 and then derives $\| \widehat{\gamma} - \gamma\|_\infty$ based on a profiled equation under some conditions.
 When all parameters are bounded, the $\ell_\infty$ norm error for $\widehat{\beta}$ is in the order of $O_p(n^{-1/2})$ while
 the $\ell_\infty$ norm error for $\widehat{\gamma}$ is in the order of $O_p(n^{-1})$, both up to a logarithm factor.
 When the parameters diverge, the error bounds depend on additional factors involved with the ranges of $\beta$ and $\gamma$.
Further, we derive an asymptotic representation of the moment estimator, based on which, we derive their asymptotic normal distributions under classical CLT conditions. To illustrate the unified results, we
present three applications, along with comprehensive numerical simulations and a real data example.
}

The rest of the paper is organized as follows.
\yuan{
In Section \ref{section:model}, we present our general model.
In Section \ref{section:estimation}, we propose our moment estimation equations.
In Section \ref{section:asymptotic}, we establish consistency and asymptotic normality of our estimator under mild conditions.
Section \ref{section:app} illustrates the application of our general framework to weighted networks with logistic, Poisson and probit edge formation schemes.
Section \ref{section:sd} contains summary and discussion. Due to limited space, simulation results and the real data application are relegated to Supplementary Material.
}

% We give the summary and further discussion in Section .
% Simulations and a real data analysis are relegated into the Supplementary Material.
% The proof of Theorem \ref{Theorem:con} %% the main results
% are relegated into Section \ref{section:appendix}.
% The proofs of  Theorems \ref{Theorem-central-a} and \ref{theorem-central-b} and all supported lemmas
% are given in the Supplementary Material.

\section{\yuan{Covariate-assisted $\beta$-model}}
\label{section:model}

\yuan{We shall jointly analyze data from two sources: network and edge-wise covariates.  The network data is represented by an adjacency matrix} $A=(a_{ij})_{n\times n}, \yuan{1\leq i<j\leq n}$.  \yuan{We study undirected networks without self-loops, i.e. $A$ is symmetric $a_{ij}=a_{ji}$ and $a_{ii}=0$.  In this paper, each entry $a_{ij}$ may be binary or weighted (such as collaboration counts in a co-authorship network and phone call lengths).}
Let $d_i= \sum_{j \neq i} a_{ij}$ be the degree of node $i$
and $d=(d_1, \ldots, d_n)^T$ be the degree sequence.
% of the graph $G_n$.
\yuan{In addition to network data}, we \yuan{also} observe a covariate vector \yuan{$z_{ij}\in\mathbb{R}^p$ on each edge}.
\yuan{This setting also covers the scenario when we observe nodal attributes $x_i$: simply define a similarity/dissimilarity measure $g(\cdot,\cdot)$ that converts these attributes to edge-wise covariates via $g(x_i,x_j)$.  Examples including Euclidean distance for continuous $x_i$'s and Hamming distance for binary $x_i$'s.}

\yuan{Our goal is to jointly model \emph{degree heterogeneity} and \emph{homophily}.}
\yuan{Degree heterogeneity} is \yuan{captured by a latent} parameter $\beta_i\in\mathbb{R}$ \yuan{on each node}.
\yuan{Homophily is driven by edge-wise covariates under our framework.  Specifically, it is accounted for by $z_{ij}^T \gamma$, where the exogenous parameter $\gamma\in\mathbb{R}^p$ can be understood analogously like a regression coefficient in a generalized linear model.}

\yuan{Now we present our model.  Given $Z=(z_{ij})$, the network entries $a_{ij}$ are generated independently by the following model, which we call ``covariate-assisted $\beta$-model'':}
\begin{equation}
    \label{model-2}
    a_{ij}|\{z, \beta, \gamma\}
    \sim
    f\big(
        a_{ij} \big| \beta_i + \beta_j + z_{ij}^T \gamma
    \big).
\end{equation}
where $f$ is a known probability density\yuan{/mass} function, $\beta_i$ is the degree parameter of node $i$ and $\gamma$ is a $p$-dimensional regression coefficient for the covariate $z_{ij}$.
\yuan{Our model \eqref{model-2} generalizes the semi-parametric models in econometrics literature \citep{FVW2016} with binary and exponential responses for undirected networks.  We focus on these additive models for computational tractability.  It would be an interesting future work to generalize the method of our analysis to address the more general case where $\beta, z$ and $\gamma$ enter the model as non-additive effects.}

\yuan{The model \eqref{model-2} extends not only the well-known $\beta$-model, but also many of its variants \citep{Yan:Qin:Wang:2015}.  In many examples, such as $\beta$-model, $f(\cdot)$ is an increasing function of $\beta_i$.  Consequently, nodes having relatively large degree parameters will have more links than those nodes with low degree parameters, without considering homophily.  To further illustrate the usefulness of \eqref{model-2}, we consider two running examples.
}

\begin{example}(Binary edges)
\label{example-a}
\yuan{In statistical network analysis, a long-studied problem is to jointly model network data with additional covariates.  For example,  \citet{yang2013community, zhang2016community} incorporate nodal covariates into a stochastic block model; in contrast our model provides a flexible tool for incorporating nodal and/or edge-wise covariates into a $\beta$-model.}
\yuan{Here, we consider binary edges, i.e.} $a_{ij}\in \{0,1\}$.  Let
$F$ be \yuan{some properly chosen transformation: $F: \mathbb{R}\to[0,1]$.}
% a cumulative distribution function.
The probability of $a_{ij}$ is
\[
\P( a_{ij}= a ) = \{F( \beta_i+\beta_j + z_{ij}^T \gamma )\}^{a} \{1-F(\beta_i+\beta_j+z_{ij}^T \gamma ) \}^{1-a},~~a\in\{0, 1\}.
\]
Two \yuan{popular choices of} $F(\cdot)$ are \yuan{sigmoid transformation} $F(x)=e^x/(1+e^x)$ \citep{Graham2017} and probit transformation $F(x)=\Phi(x)$, where $\Phi(x)$ is the \yuan{CDF of $N(0,1)$}.
\end{example}

\begin{example}\yuan{(Unbounded discrete edges)}
% (Infinite discrete weight)
\label{example-b}
\yuan{This example generalizes the Poisson $\beta$-model in Section 3.4 of \citet{Yan:Qin:Wang:2015}.  Here, we model $a_{ij}\sim$~Poisson$\big(\lambda_{ij}\big)$, where $\lambda_{ij}= \exp( \beta_i+\beta_j + z_{ij}^T \gamma)$.  That is,}
\[
\log \P(a_{ij}=a) = a(\beta_i+\beta_j + z_{ij}^T \gamma) - \exp ( \beta_i+\beta_j + z_{ij}^T \gamma) - \log a!.
\]
\end{example}

\section{\yuan{Parameter} estimation}
\label{section:estimation}

\yuan{To motivate the estimation method for our covariate-assisted $\beta$-model \eqref{model-2}, it is helpful to very briefly recall that for the classical $\beta$-model, i.e. $\gamma\equiv0$, there are two mainstreams of estimation methods: MLE \citep{Chatterjee:Diaconis:Sly:2011, Rinaldo2013} and method of moments \citep{Yan:Qin:Wang:2015}.
While MLE is a widely-recognized method for model parameter estimation, as aforementioned in the introduction, method-of-moments may enjoy an easier extension to dependent-edge scenarios in the future (see also Section \ref{section:sd}).  Therefore, we propose and study a moment estimator in this paper.
}
\yuan{
Notice that $\ep[a_{ij}]$ depends on the model parameters only through
\begin{equation}\label{definition-pi}
\pi_{ij}:= \beta_i + \beta_j  + z_{ij}^T \gamma.
\end{equation}
There exists a function $\mu(\cdot)$ such that $\ep[a_{ij}] = \mu(\pi_{ij})$.
In some future texts, we find it more convenient to emphasize that $\mu(\pi_{ij})$ can be viewed as the $(i,j)$ element of an $n\times n$ matrix.  Therefore, we slightly abuse notation and might sometimes use a different notation $\mu_{ij}(\beta, \gamma)$ to represent $\mu(\pi_{ij})$.

Now we are ready to present our method of moments parameter estimation.  To motivate our formulation, consider a special case when the edge distribution belongs to an exponential family, namely,
\begin{equation}
    \label{eqn::model-exp-family}
    L(a|\beta,z,\gamma)
    =
    C(\beta,z,\gamma)
    \cdot
    e^{\big(\sum_{1\leq i<j\leq n}a_{ij}
    \cdot (\beta_i+\beta_j+z_{ij}^T\gamma)\big)}
    \cdot h(a,z).
\end{equation}
Examples of \eqref{eqn::model-exp-family} include logistic model and Poisson model with covariates, as in Section \ref{section:app}.
% \commentyuan{[Yuan says: please add a few proper special cases (examples) here, just names.]}
Sending the partial derivatives of the log-likelihood function of \eqref{eqn::model-exp-family} to zero, we obtain the following moment equations.
\begin{align}
    d_i &= \sum_{j:j\neq i} \mu_{ij}(\beta,\gamma),
    \quad i\in [n]
    \label{eq:moment-beta}
    \\
    \sum_{1\leq i<j\leq n} z_{ij}a_{ij}
    &= \sum_{1\leq i<j\leq n} z_{ij}\mu_{ij}(\beta,\gamma)
    \label{eq:moment-gamma}
\end{align}
where $[n]=\{1,\ldots,n\}$.
Denote the solution to \eqref{eq:moment-beta} and \eqref{eq:moment-gamma} by $(\hat\beta,\hat\gamma)$.
We shall address the natural questions of existence and uniqueness of $(\hat\beta,\hat\gamma)$ in Theorem \ref{Theorem:con}.
}

Now we discuss some computational issues.
When the number of nodes $n$ is small and $f$ is the binomial, Probit, or Poisson probability function or Gamma density function,
we can simply use the package ``glm" in the R language to solve \eqref{eq:moment-beta} and \eqref{eq:moment-gamma}.
For relatively large $n$, it might not have large enough memory to store the design matrix for $\beta$ required by the R package ``glm".
In this case,
we recommend the use of a two-step iterative algorithm by alternating between solving the first equation in  \eqref{eq:moment-beta} via the fixed point method [\citet{Chatterjee:Diaconis:Sly:2011}]
or {the gradient descent algorithm [\citet{MAL-050}]}
and solving the second equation in \eqref{eq:moment-gamma}.

\section{Asymptotic properties}
\label{section:asymptotic}
In this section, we present the consistency and asymptotic \yuan{normality} of the moment estimator.
\yuan{We start with notation.}
For \yuan{any} $C\subset \R^n$, let $C^0$ and $\overline{C}$ denote the interior and closure of $C$, respectively.
For a vector $x=(x_1, \ldots, x_n)^T\in \R^n$, \yuan{let} $\|x\|$ \yuan{be a generic notation for vector norm.  Specifically, inherit the notion of $\|x\|_p$ to denote $\ell_p$ norm from functional analysis.}
% $\|x\|_\infty = \max_{1\le i\le n} |x_i|$ and $\|x\|_1=\sum_i |x_i|$ for the $\ell_\infty$- and $\ell_1$-norm of $x$ respectively.
% When $n$ is fixed, all norms on vectors are equivalent.
Let $B_\infty(x, \epsilon)=\{y: \| x-y\|_\infty \le \epsilon\}$ be an $\epsilon$-neighborhood of $x$ \yuan{under $\ell_\infty$ metric}.
For an $n\times n$ matrix $J=(J_{i,j})$, let $\|J\|_\infty$ denote the matrix norm induced by the $\ell_\infty$-norm on vectors in $\R^n$, i.e.,
\[
\|J\|_\infty = \sup_{x\neq 0} \frac{ \|Jx\|_\infty }{\|x\|_\infty}
=\max_{1\le i\le n}\sum_{j=1}^n |J_{i,j}|,
\]
$\|J\|_{\max}=\max_{i,j}|J_{ij}|$,
and let $\|J\|$ be a \yuan{generic notion for} matrix norm.
% Define the matrix maximum norm: .
We use a superscript ``*" to \yuan{mark} the true parameter.
\yuan{But in early sections of this paper, without causing ambiguity, we might omit it when stating the model.}
% The notation $\sum_{j<i}$  is a shorthand for $\sum_{i=1}^n \sum_{j=1, j<i}^n$.
% Write the partial derivative of a function vector $H(\beta, \gamma)$ evaluated at $(\widehat{\beta}, \widehat{\gamma})$ as
% \[
% \frac{ \partial H(\widehat{\beta}, \widehat{\gamma}) }{ \partial \beta }=\frac{ \partial H(\beta, \gamma) }{ \partial \beta }\bigg |_{\beta=\widehat{\beta},\gamma=\widehat{\gamma}}
% \]

\yuan{Next, we set up regularity conditions for our main theorems.}
Assume $\mu(\cdot)$ \yuan{has continuous third derivative}.
% Write $\mu^\prime$, $\mu^{\prime\prime}$ and $\mu^{\prime\prime\prime}$ as the first, second and third derivative of $\mu(x)$ on $x\in \R$, respectively.
% {Let $\epsilon_{n1}$ and $\epsilon_{n2}$ be two small positive numbers that tend to zero as $n$ goes to infinity.}
Recall $\pi_{ij}=\beta_i+\beta_j+z_{ij}^T \gamma$ as defined in \eqref{definition-pi}.
\yuan{Suppose there exist $b_{n0}, b_{n1}, b_{n2}, b_{n3}>0$ such that}
\begin{subequations}
\begin{gather}
\min_{i,j} \mu^\prime(\pi_{ij}) \cdot \max_{i,j} \mu^\prime(\pi_{ij}) >0,
\label{ineq-mu-key0}
\\
\label{ineq-mu-keya}
b_{n0}\le \min_{i,j} |\mu^\prime(\pi_{ij})| \le \max_{i,j}|\mu^\prime(\pi_{ij})|\le b_{n1},  \\
\label{ineq-mu-keyb}
\max_{i,j}|\mu^{\prime\prime}(\pi_{ij})| \le b_{n2}, \\
\label{ineq-mu-keyc}
\max_{i,j}|\mu^{\prime\prime\prime}(\pi_{ij})| \le b_{n3}.
\end{gather}
\end{subequations}
\yuan{hold for all $\beta \in B_\infty(\beta^*, \epsilon_{n1}), \gamma\in B_\infty(\gamma^*, \epsilon_{n2})$, where $\epsilon_{n1},\epsilon_{n2}>0$ are two diminishing numbers as $n\to\infty$.}

Condition \eqref{ineq-mu-key0} is mild, requiring that  the derivative of the expectation function
$\mu(x)$ is positive for all $x\in \R$ or negative.
\yuan{The conditions \eqref{ineq-mu-keya}--\eqref{ineq-mu-keyc} may seem quite technical and abstract for readers.  To help with intuitive understanding, let us illustrate them using Example \ref{example-a} with a logistic link function.}
In this case, $\mu(x)=e^x/(1+e^x)$.
Straight calculations show
\[
\mu^\prime(x) = \frac{e^x}{ (1+e^x)^2 },~~  \mu^{\prime\prime}(x) = \frac{e^x(1-e^x)}{ (1+e^x)^3 },~~ \mu^{\prime\prime\prime}(x) = \frac{e^x(1-4e^x+e^{2x})}{ (1+e^x)^4 }.
\]
It is easy to verify that \yuan{$\max\big\{
    |\mu^\prime(x)|,
    |\mu^{\prime\prime}(x)|,
    |\mu^{\prime\prime\prime}(x)|
    \big\}\leq 1/4$, where we used}
\[
|\mu^{\prime\prime}(x)| \le \frac{e^x}{ (1+e^x)^2 }  \left|\frac{(1-e^x)}{ (1+e^x) }\right|
\quad\textrm{and}\quad
|\mu^{\prime\prime\prime}(x)| =
\frac{e^x}{ (1+e^x)^2 }  \left| \frac{(1-e^x)^2}{ (1+e^x)^2 } - \frac{2e^x}{ (1+e^x)^2 }  \right|.
\]
\yuan{Therefore, in this example, we can set $b_{n1}=b_{n2}=b_{n3}=1/4$}
and
\begin{equation}\label{bn0-logistic}
b_{n0} = \min_{ i,j} \frac{e^{\pi_{ij}}}{ (1+e^{\pi_{ij}})^2 } \ge  \frac{ e^{ 2\|\beta^*\|_\infty + \|\gamma^*\|_1 z_* + 2\epsilon_{n1}+p\epsilon_{n2} } }{ (1 + e^{ 2\|\beta^*\|_\infty + \|\gamma^*\|_1 z_* + 2\epsilon_{n1}+p\epsilon_{n2} })^2 },
\end{equation}
where $z_*:=\max_{i,j} \| z_{ij} \|_\infty$.

\yuan{In addition to the regularity conditions \eqref{ineq-mu-key0}--\eqref{ineq-mu-keyc} this section, we shall also need the following assumptions:}
\begin{assumption}\label{assumption-a}
\yuan{(Bounded covariates) Suppose} $\max_{i,j} \|z_{ij}\|_\infty\leq C_z$ \yuan{holds for some universal constant $C_z$.}
\end{assumption}
\begin{assumption}\label{assumption-b}
\yuan{(Sub-exponential edge distribution) The distribution of} $a_{ij}- \E a_{ij}$ is sub-exponential, with parameter $h_{ij}$.  Denote $h_n:=\max_{i,j} h_{ij}$.
\end{assumption}

\yuan{Assumption \ref{assumption-a} is naturally satisfied by some popular dissimilarity measures between nodal covariates, such as Hamming distance.  If the observed $z_{ij}$'s or nodal covariates seem to vary wildly, we can simply apply a transformation such as sigmoid or probit functions to tame them into universally bounded edge covariates, see Section 6 of \citet{zhang2020edgeworth}.  Assumption \ref{assumption-b} is satisfied by many popular edge distributions, such as those in \citet{Yan:Qin:Wang:2015}.  We make this assumption mostly to make our narration succinct -- it can be replaced by any other conditions that guarantee
$|d_i - \E d_i|=O_p(n^{1/2})$ and $|\sum_{i<j} (a_{ij} - \E a_{ij})|=O_p(n)$, respectively.}

\subsection{Consistency}
\yuan{The asymptotic behavior of the estimator $(\hat\beta,\hat\gamma)$ critically depends on the curvature of $\mu(\beta,\gamma)$.
To study this curvature, we start with setting up some notation.  Define
\begin{equation}\label{eqn:def:F}
 F_i(\beta, \gamma)= \sum\limits_{j=1, j\neq i}^n \mu_{ij}(\beta, \gamma)-d_i,~~i=1, \ldots, n.
\end{equation}
For simplicity, we write $F(\beta, \gamma)=(F_1(\beta, \gamma), \ldots, F_n(\beta, \gamma))^T$.  Also denote $F_{\gamma,i}(\beta)$ to be $F_i(\beta, \gamma)$ for an arbitrary given $\gamma$, denote $F_\gamma(\beta)=(F_{\gamma,1}(\beta), \ldots, F_{\gamma,n}(\beta))^T$ and define $\hat\beta_\gamma$ to be the solution to $F_\gamma(\beta)=0$.  Set
\begin{eqnarray}
    \label{definition-Q}
    Q(\beta, \gamma)= \sum_{i<j} z_{ij} ( \mu_{ij}(\beta, \gamma) - a_{ij} ), \\
    \label{definition-Qc}
    Q_c(\gamma)= \sum_{i<j} z_{ij} ( \mu_{ij}(\widehat{\beta}_\gamma, \gamma) - a_{ij} ).
\end{eqnarray}
By definition, we have the following relationships:
\begin{equation*}\label{equation:FQ}
F(\widehat{\beta}, \widehat{\gamma})=0,~~F_\gamma(\widehat{\beta}_\gamma)=0,~~Q(\widehat{\beta}, \widehat{\gamma})=0,~~Q_c(\widehat{\gamma})=0.
\end{equation*}
Similar to \citet{Chatterjee:Diaconis:Sly:2011, Yan:Leng:Zhu:2016}, we define a notion called ``${\cal L}$ class matrices'' for narration convenience.  Given some $M\ge m>0$, we say an $n\times n$ matrix $V=(v_{ij})$ belongs to the matrix class $\mathcal{L}_{n}(m, M)$ if
$V$ is a diagonally balanced matrix with positive elements bounded by $m$ and $M$, i.e.,
\begin{equation}\label{eq1}
v_{ii}=\sum_{j=1, j\neq i}^{n} v_{ij}, ~~i=1,\ldots, n, ~~
m\le v_{ij} \le M, ~~ i,j=1,\ldots,n; i\neq j.
\end{equation}
Since $\pi$ is linear in $\beta$, for any $1\leq \{i\neq j\}\leq n$, we have
\begin{equation}
    \label{eqn::temp-1}
    \frac{\partial F_i(\beta, \gamma) }{\partial \beta_i } = \sum_{j\neq i} \mu^\prime(\pi_{ij}),\qquad
    \frac{\partial F_i(\beta, \gamma) }{\partial \beta_j } = \mu^\prime(\pi_{ij}).
\end{equation}
It is easy to verify that \eqref{eqn::temp-1} yields that when $\mu^\prime(x)>0$, $\beta\in B_\infty(\beta^*, \epsilon_{n1})$ and $\gamma\in B_\infty(\gamma^*, \epsilon_{n2})$, we have $F'_\gamma(\beta)\in \mathcal{L}_n(b_{n0}, b_{n1})$.
For simplicity, we assume $F'_\gamma(\beta)\in \mathcal{L}(b_{n0}, b_{n1})$ hereafter (if $-F'_\gamma(\beta)\in \mathcal{L}(b_{n0}, b_{n1})$, we could rewrite $\tilde{F}_\gamma(\beta) :=  - F_\gamma(\beta)$).  Define a convenient shorthand
$$
    V(\beta, \gamma): = F'_\gamma(\beta),
$$
and define an abbreviation $V=V(\beta^*, \gamma^*)$.
We will establish the consistency of the estimator $\hat{\beta}_\gamma$ using the theorems of Newton method, for which we shall need an explicity formulation of $F'_\gamma( \beta )$.
This inverse does not have a closed form, but fortunately, by mimicking \citet{simons-yao1999}, \citet{Yan:Zhao:Qin:2015} proposed a convenient approximate inversion formula $V\in \mathcal{L}_{n}(m, M)$ by
\begin{equation}\label{definition-s}
    S=\mathrm{diag}(1/v_{11}, \ldots, 1/v_{nn})
\end{equation}
 at approximate error of $O\left( M^2/(n^2 m^3) \right)$ under the matrix maximum norm (i.e.,
the maximum of all absolute elements of a matrix).

With the above notation preparations, now we commence the asymptotic analysis of the estimator.  First, we have
\begin{align}
    \frac{ \partial F_\gamma(\widehat{\beta}_\gamma) }{\partial \gamma^T}
    &=~
    \frac{ \partial F(\widehat{\beta}_\gamma, \gamma) }{\partial \beta^T}
    \frac{\partial \widehat{\beta}_\gamma }{\gamma^T} + \frac{\partial F(\widehat{\beta}_\gamma, \gamma)}{\partial \gamma^T} = 0,
    \label{equ-derivation-a}
    \\
    \frac{ \partial Q_c(\gamma)}{ \partial \gamma^T} &=~
    \frac{\partial Q(\widehat{\beta}_\gamma, \gamma)}{\partial \beta^T}
     \frac{\partial \widehat{\beta}_\gamma }{\gamma^T} + \frac{ \partial Q(\widehat{\beta}_\gamma, \gamma) }{ \partial \gamma^T}.
     \label{equ-derivation-b}
\end{align}
Combining \eqref{equ-derivation-a} and \eqref{equ-derivation-b}, the Jacobian matrix $Q_c^\prime(\gamma)=\partial Q_c^\prime(\gamma)/\partial \gamma$ has the following formulation:
}
\begin{eqnarray}\label{equation:Qc-derivative}
\frac{ \partial Q_c(\gamma) }{ \partial \gamma^T }  =
\frac{ \partial Q(\widehat{\beta}_\gamma, \gamma) }{ \partial \gamma^T}
 - \frac{ \partial Q(\widehat{\beta}_\gamma, \gamma) }{\partial \beta^T}
 \left[\frac{\partial F(\widehat{\beta}_\gamma,\gamma)}{\partial \beta^T}  \right]^{-1}
\frac{\partial F(\widehat{\beta}_\gamma,\gamma)}{\partial \gamma^T}.
\end{eqnarray}

The asymptotic behavior of $\widehat{\gamma}$ crucially depends on $Q_c^\prime(\gamma)$.
\yuan{But the $\widehat{\beta}_\gamma$ that appears in the definition of $ Q_c^\prime(\gamma)$ does not have a closed form.  To facilitate the quantitative study of the curvature of $Q_c(\gamma)$, define}
\begin{equation}
\label{definition-H}
H(\beta, \gamma) = \frac{ \partial Q(\beta, \gamma) }{ \partial \gamma^T } - \frac{ \partial Q(\beta, \gamma) }{\partial \beta^T } \left[ \frac{\partial F(\beta, \gamma)}{\partial \beta^T } \right]^{-1}
\frac{\partial F(\beta, \gamma)}{\partial \gamma^T },
\end{equation}
which \yuan{can be viewed as a relaxed version of} $\partial Q_c(\gamma) / \partial \gamma$.
When $\beta\in B_\infty(\beta^*, \epsilon_{n1})$, \yuan{by Section 10 of Supplemental Material,} we have
\begin{equation}\label{equation-H-appro}
\frac{1}{n^2} \big(H(\beta, \gamma^*)\big)_{ij} = \frac{1}{n^2} \big(H(\beta^*, \gamma^*)\big)_{ij} + o(1),
\end{equation}
\yuan{for each given $(i,j): 1\leq i<j\leq n$, where recall that the entries of $H$ are sums $n(n-1)/2$ of terms, thus $n^{-2}$ would be a proper rescaling factor.}
% \commentyuan{[Yuan says: it's unclear what \eqref{equation-H-appro} means -- which matrix norm does the remainder term's bound refer to??]}{\color{wqpcolor}It does not easy to prove the matrix norm.}

We assume $H(\beta, \gamma)$ is positively definite. \yuan{When $a_{ij}$ belongs to
exponential family of distributions, $H(\beta, \gamma)$ is the Fisher information matrix of the concentrated likelihood function on $\gamma$ (e.g. page 126 of \citet{Amemiya:1985}) and is thus positive definite.
See also Section \ref{section:app}.}
In fact, the asymptotic variance of $\widehat{\gamma}$ is $H^{-1}(\beta, \gamma)$ when $f(\cdot)$ in \eqref{model-2} \yuan{is an} exponential-family \yuan{distribution}; see the applications in Section \ref{section:app}. Thus, the asymptotic behavior of $\widehat{\gamma}$ will be ill-posed without this assumption.
Define
\begin{equation}
    \kappa_n
    :=
    \sup_{\beta\in B_\infty(\beta^*, \epsilon_{n1})} \| n^2 \cdot H^{-1}(\beta, \gamma^*)\|_\infty
\end{equation}

Now we formally state the consistency result.

\begin{theorem}\label{Theorem:con}
Let  $\sigma_n^2 = n^2 \| (V^{-1}-S) \mathrm{Cov}(F(\beta^*, \gamma^*)) (V^{-1}-S) \|_{\max}$.
Suppose Assumptions \ref{assumption-a} and \ref{assumption-b} and conditions \eqref{ineq-mu-key0}--\eqref{ineq-mu-keyc} hold, and
\begin{equation}\label{eq-theorema-ca}
 \frac{ \kappa_n^2 b_{n1}^4 b_{n2}  }{ b_{n0}^{3} }\left(\frac{b_{n2}h_n^2}{b_{n0}^{3}}+ \sigma_n\right)=o\left(\frac{n}{\log n}\right).
\end{equation}
Then the moment estimator $\widehat{\gamma}$ exists with \yuan{high probability, and we further have}
\begin{align*}\label{Newton-convergence-rate}
\| \widehat{\gamma} - \gamma^{*} \|_\infty &=  O_p\left(
 \frac{\kappa_n b_{n1} \log n }{ n }\left(\frac{h_n^2b_{n2}}{b_{n0}^{3}}+ \sigma_n\right) \right  )=o_p(1) \\
\| \widehat{\beta} - \beta^* \|_\infty &= O_p\left( \frac{ h_n }{b_{n0}}\sqrt{\frac{\log n}{n}} \right)=o_p(1).
\end{align*}
\end{theorem}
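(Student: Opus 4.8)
The plan is to exploit the two-stage profiling structure encoded in \eqref{equ-derivation-a}--\eqref{equation:Qc-derivative}: I would first control the inner estimator $\widehat\beta_\gamma$ uniformly over $\gamma \in B_\infty(\gamma^*,\epsilon_{n2})$, then feed this into the concentrated equation $Q_c(\gamma)=0$ to pin down $\widehat\gamma$, and finally recover $\widehat\beta = \widehat\beta_{\widehat\gamma}$. For the inner problem I would run a Newton--Kantorovich argument for $F_\gamma(\beta)=0$ started at $\beta^*$. Since the Jacobian $V(\beta,\gamma)=F_\gamma'(\beta)$ lies in $\mathcal{L}_n(b_{n0},b_{n1})$, its diagonal dominates, $v_{ii}\ge (n-1)b_{n0}$, so $\|V^{-1}\|_\infty$ is comparable to $\|S\|_\infty \le \{(n-1)b_{n0}\}^{-1}$ with the approximation error from \eqref{definition-s}, while the modulus of continuity of $V$ is governed by $b_{n2}$ through \eqref{ineq-mu-keyb}. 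The initial residual is $F_{\gamma^*}(\beta^*)=\E d - d$, whose sup-norm is $\max_i |d_i - \E d_i| = O_p(h_n\sqrt{n\log n})$ by Assumption \ref{assumption-b} and a union bound over the $n$ coordinates. The leading Newton correction $-V^{-1}F_{\gamma^*}(\beta^*)\approx S(d-\E d)$ is componentwise $v_{ii}^{-1}(d_i-\E d_i)$, which already delivers $\|\widehat\beta_{\gamma^*}-\beta^*\|_\infty = O_p\big(h_n b_{n0}^{-1}\sqrt{\log n/n}\big)$, exactly the claimed rate for $\widehat\beta$; the Kantorovich condition, guaranteed once this quantity is $o(\epsilon_{n1})$, confines the iterates to $B_\infty(\beta^*,\epsilon_{n1})$ and makes the remaining Newton error quadratically smaller.

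For the outer problem I would solve $Q_c(\gamma)=0$ by the same device. Its Jacobian is precisely $Q_c'(\gamma)$ in \eqref{equation:Qc-derivative}, i.e. $H(\widehat\beta_\gamma,\gamma)$ of \eqref{definition-H}; combining the stability estimate \eqref{equation-H-appro} with the positive-definiteness assumption, $n^{-2}Q_c'(\gamma)$ stays uniformly close to $n^{-2}H(\beta^*,\gamma^*)$ over the relevant neighborhood, so that $\|[Q_c'(\gamma)]^{-1}\|_\infty \lesssim \kappa_n/n^2$. It then suffices to bound the residual $Q_c(\gamma^*)$ and invoke $\|\widehat\gamma-\gamma^*\|_\infty \lesssim (\kappa_n/n^2)\,\|Q_c(\gamma^*)\|_\infty$, with condition \eqref{eq-theorema-ca} supplying both the contraction that yields existence of $\widehat\gamma$ and the $o_p(1)$ conclusion.

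To bound $Q_c(\gamma^*)$ I would Taylor-expand $\mu_{ij}(\widehat\beta_{\gamma^*},\gamma^*)$ about $\beta^*$ and substitute the Stage-one expansion $\widehat\beta_{\gamma^*}-\beta^*\approx -V^{-1}F(\beta^*,\gamma^*)$, giving
\[
Q_c(\gamma^*) = Q(\beta^*,\gamma^*) - \frac{\partial Q}{\partial\beta^T} V^{-1} F(\beta^*,\gamma^*) + \mathcal{R},
\]
where $\mathcal{R}$ collects the second-order terms. The remainder $\mathcal{R}$ carries the factor $b_{n2}\,\|\widehat\beta_{\gamma^*}-\beta^*\|_\infty^2$ summed over the $\binom{n}{2}$ pairs, which produces the deterministic-order $h_n^2 b_{n2}/b_{n0}^3$ contribution. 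For the first two (stochastic) terms I would split $V^{-1}=S+(V^{-1}-S)$: the diagonal surrogate $S$ merges $Q(\beta^*,\gamma^*)$ and $\tfrac{\partial Q}{\partial\beta^T}S\,F(\beta^*,\gamma^*)$ into the profile score, a covariate-weighted sum of the centered edges $a_{ij}-\E a_{ij}$ controlled by Bernstein-type concentration under Assumption \ref{assumption-b}, while the leftover $\tfrac{\partial Q}{\partial\beta^T}(V^{-1}-S)F(\beta^*,\gamma^*)$ has standard deviation of order $\sigma_n$ by the definition in the statement (recall $\mathrm{Cov}(F)=\mathrm{Cov}(d)$). Collecting the three pieces gives $\|Q_c(\gamma^*)\|_\infty \lesssim n b_{n1}\log n\,(h_n^2 b_{n2}/b_{n0}^3 + \sigma_n)$ and hence the stated rate for $\widehat\gamma$. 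Finally I would write $\widehat\beta=\widehat\beta_{\widehat\gamma}$ and bound $\|\widehat\beta_{\widehat\gamma}-\beta^*\|_\infty \le \|\widehat\beta_{\widehat\gamma}-\widehat\beta_{\gamma^*}\|_\infty + \|\widehat\beta_{\gamma^*}-\beta^*\|_\infty$, the first term being at most $\|\partial\widehat\beta_\gamma/\partial\gamma^T\|_\infty\,\|\widehat\gamma-\gamma^*\|_\infty \lesssim (b_{n1}/b_{n0})\|\widehat\gamma-\gamma^*\|_\infty$ via \eqref{equ-derivation-a}, which \eqref{eq-theorema-ca} forces to be dominated by the Stage-one rate.

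I expect the main obstacle to be the control of the residual $Q_c(\gamma^*)$ in Stage two. Because $Q_c$ aggregates over all $\binom{n}{2}$ node pairs, the plug-in error of $\widehat\beta_{\gamma^*}$ is prone to accumulate, and the delicate point is showing that after the approximate inversion $V^{-1}\approx S$ the first-order correction combines with $Q(\beta^*,\gamma^*)$ into a term of the correct order $n$ rather than one inflated to $n^2$, with the genuinely irreducible fluctuation isolated cleanly as $\sigma_n$. A secondary technical crux is the uniform Jacobian stability \eqref{equation-H-appro} over $B_\infty(\beta^*,\epsilon_{n1})$, which underlies the use of $\kappa_n$ and must be established before the Newton argument on $Q_c$ can be closed.
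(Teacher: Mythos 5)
Your proposal follows essentially the same route as the paper's own proof: a two-stage Newton--Kantorovich argument (the inner problem $F_\gamma(\beta)=0$ started at $\beta^*$, then the concentrated equation $Q_c(\gamma)=0$ started at $\gamma^*$ with $\aleph=O(\kappa_n n^{-2})$, Lipschitz constant $n^2 b_{n1}^3 b_{n2} b_{n0}^{-3}$, and the residual $Q_c(\gamma^*)$ controlled by substituting the Stage-one expansion $\widehat{\beta}_{\gamma^*}-\beta^*$ and splitting $V^{-1}=S+(V^{-1}-S)$ to isolate the $h_n^2 b_{n2}/b_{n0}^3$ and $\sigma_n$ pieces), which is precisely the content of the paper's Lemmas 4--9 and its proof of the theorem. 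The only deviation is your final step: the paper recovers $\|\widehat{\beta}-\beta^*\|_\infty$ by applying the uniform-in-$\gamma$ Stage-one bound directly at $\gamma=\widehat{\gamma}$, which is cleaner than your triangle inequality via $\|\partial\widehat{\beta}_\gamma/\partial\gamma^T\|_\infty\lesssim b_{n1}/b_{n0}$, since your claim that \eqref{eq-theorema-ca} forces $(b_{n1}/b_{n0})\|\widehat{\gamma}-\gamma^*\|_\infty$ below the Stage-one rate is not evident in the regime where $\sigma_n$ dominates $b_{n2}h_n^2/b_{n0}^3$ --- though this is harmless, as the uniform bound you already established makes that step unnecessary.
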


\yuan{Our proof of Theorem \ref{Theorem:con} analyzes a two-stage Newton method and is thus} different from \citet{Graham2017} \yuan{that uses a convergence rate analysis of the fixed point method} in \citet{Chatterjee:Diaconis:Sly:2011}.

\yuan{When} $f(\cdot)$ in model \eqref{model-2} is an exponential family distribution, then $V=\mathrm{Cov}(F(\beta^*, \gamma^*))$.
In this case, \yuan{the expression inside the norm of $\sigma_n^2$ simplifies into}
\[
 (V^{-1}-S) V (V^{-1}-S) = V^{-1} - S + \frac{ v_{ij}(1-\delta_{ij}) }{ v_{ii} v_{jj} },
\]
By Lemma \ref{pro:inverse:appro}, $\|V^{-1} - S\|_{\max} = O( b_{n1}^2 b_{n0}^{-3} n^{-2})$.
Thus, $\sigma_n^2 = O( b_{n1}^2/ b_{n0}^3)$. We have the following corollary.

\begin{corollary}
Assume $V=\mathrm{Cov}(F(\beta^*, \gamma^*))$ \yuan{and the conditions of Theorem \ref{Theorem:con} hold}.
If
\[
\frac{ \kappa_n^2 h_n^2 b_{n1}^5 b_{n2} }{ b_{n0}^{6} }=o\left( \frac{n}{\log n} \right),
\]
then
\begin{align*}\label{Newton-convergence-rate2}
\| \widehat{\gamma} - \gamma^{*} \|_\infty &=  O_p\left(
 \frac{\kappa_n b_{n1}^2 h_n^2b_{n2} \log n }{ n b_{n0}^{3}} \right  )=o_p(1) \\
\| \widehat{\beta} - \beta^* \|_\infty &= O_p\left( \frac{ h_n }{b_{n0}}\sqrt{\frac{\log n}{n}} \right)=o_p(1).
\end{align*}
\end{corollary}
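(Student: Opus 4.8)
The plan is to read off this Corollary as a specialization of Theorem \ref{Theorem:con} under the exponential-family identity $V=\mathrm{Cov}(F(\beta^*,\gamma^*))$. The only genuinely new work is to replace the abstract quantity $\sigma_n$ appearing in Theorem \ref{Theorem:con} by an explicit bound in terms of $b_{n0}$ and $b_{n1}$; everything else is substitution and bookkeeping. Concretely, I would (i) simplify $\sigma_n^2$, (ii) feed the resulting rate into the growth condition \eqref{eq-theorema-ca} and into the two error bounds of Theorem \ref{Theorem:con}, and (iii) check that the cleaner hypothesis stated in the Corollary indeed implies \eqref{eq-theorema-ca}. Note that the $\beta$-bound is literally the same expression in both statements, so no work is required there.

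For step (i) I would first establish the matrix identity $(V^{-1}-S)V(V^{-1}-S)=(V^{-1}-S)+R$, where $R$ is the hollow matrix with $(i,j)$-entry $v_{ij}(1-\delta_{ij})/(v_{ii}v_{jj})$. This follows by expanding the product and using that $S=\mathrm{diag}(1/v_{11},\ldots,1/v_{nn})$, so that $SVS$ has entries $v_{ij}/(v_{ii}v_{jj})$ whose diagonal exactly cancels one copy of $S$. I would then bound the two pieces separately in the maximum norm: Lemma \ref{pro:inverse:appro} gives $\|V^{-1}-S\|_{\max}=O(b_{n1}^2 b_{n0}^{-3}n^{-2})$, while for $R$ the off-diagonal bound $v_{ij}\le b_{n1}$ together with the diagonal-balance estimate $v_{ii}=\sum_{j\neq i}v_{ij}\ge (n-1)b_{n0}$ yields $\|R\|_{\max}=O(b_{n1}b_{n0}^{-2}n^{-2})$. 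Since $b_{n1}\ge b_{n0}$, the first piece dominates, so $\|(V^{-1}-S)V(V^{-1}-S)\|_{\max}=O(b_{n1}^2 b_{n0}^{-3}n^{-2})$, and hence $\sigma_n^2=n^2\cdot O(b_{n1}^2 b_{n0}^{-3}n^{-2})=O(b_{n1}^2/b_{n0}^3)$, i.e. $\sigma_n=O(b_{n1}/b_{n0}^{3/2})$.

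For steps (ii)--(iii) I would substitute $\sigma_n=O(b_{n1}/b_{n0}^{3/2})$ into the parenthesized factor $h_n^2 b_{n2}/b_{n0}^3+\sigma_n$ appearing throughout Theorem \ref{Theorem:con}. Using the natural orderings $b_{n0}\le b_{n1}$ and $b_{n2}=O(b_{n1})$, together with the fact that in the exponential-family applications of Section \ref{section:app} the sub-exponential scale dominates the edge variance so that $h_n^2\gtrsim b_{n0}$, I would absorb both summands into the single dominant term $b_{n1}h_n^2 b_{n2}/b_{n0}^3$. Multiplying by the common prefactor $\kappa_n b_{n1}\log n/n$ then produces exactly the Corollary's $\gamma$-rate $\kappa_n b_{n1}^2 h_n^2 b_{n2}\log n/(nb_{n0}^3)$; substituting $\sigma_n$ into \eqref{eq-theorema-ca} and applying the same orderings shows that the displayed hypothesis $\kappa_n^2 h_n^2 b_{n1}^5 b_{n2}/b_{n0}^6=o(n/\log n)$ dominates each of the two summands of the substituted \eqref{eq-theorema-ca}, so that Theorem \ref{Theorem:con} may be invoked verbatim.

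I expect the only delicate point to be this last rate comparison: one must verify that, after inserting $\sigma_n=O(b_{n1}/b_{n0}^{3/2})$, the contribution of $\sigma_n$ is genuinely controlled by the contribution of $h_n^2 b_{n2}/b_{n0}^3$, so that the clean single-term hypothesis of the Corollary suffices. All of this is elementary monotone bookkeeping in the constants $b_{n0},b_{n1},b_{n2},h_n,\kappa_n$; the entire probabilistic content, including existence of $\widehat\gamma$ with high probability, is inherited directly from Theorem \ref{Theorem:con} once $\sigma_n$ has been simplified.
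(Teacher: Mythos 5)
Your proposal follows the paper's own route exactly: the paper likewise derives $\sigma_n^2=O(b_{n1}^2/b_{n0}^3)$ from the identity $(V^{-1}-S)V(V^{-1}-S)=V^{-1}-S+SVS-S$ together with Lemma \ref{pro:inverse:appro}, and then reads the corollary off Theorem \ref{Theorem:con} by substitution. The term-absorption step you flag as delicate (dominating both $h_n^2b_{n2}/b_{n0}^3$ and $\sigma_n$ by the single term $b_{n1}h_n^2b_{n2}/b_{n0}^3$, which tacitly uses orderings like $b_{n1}\gtrsim 1$ and $h_n^2 b_{n2}\gtrsim b_{n0}^{3/2}$) is left entirely implicit in the paper, so your treatment is, if anything, more explicit than the original.
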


When $f(\cdot)$ in model \eqref{model-2} belongs to exponential-family distributions and $\|\beta^*\|_\infty$ and $\|\gamma^*\|_\infty$ are \yuan{universally bounded}, then
$b_{n0}, b_{n1}, b_{n2}$ and $\sigma_n$ are constants. Further, if all covariates are bounded,
$H(\beta^*, \gamma^*)/n^2$ is approximately a constant matrix such that $\kappa_n$ is also a constant. In this case,
the conditions in Theorem \ref{Theorem:con} \yuan{easily hold}.
\yuan{Further, if} $b_{n0}, b_{n1}, b_{n2}, \kappa_n, h_n$ are constants, then the convergence rates of $\widehat{\beta}$ and $\widehat{\gamma}$ are $O_p( (\log n/n)^{1/2})$ and $O_p(\log n/n)$, \yuan{respectively}.
\yuan{This reproduces the error bound in} \citet{Chatterjee:Diaconis:Sly:2011}.
This convergence rate matches the minimax optimal upper bound $\|\widehat{\beta} - \beta\|_\infty =
O_p((\log p/n)^{1/2})$ for the Lasso estimator in the linear model with
a $p$-dimensional parameter vector $\beta$ and the
sample size $n$ \citep{lounici2008sup-norm}.
The convergence rate $O_p(\log n/n)$ for $\widehat{\gamma}$ is very close to the square root rate $N^{-1/2}$ in the classical large sample theory,
where $N= n(n-1)/2$.

\subsection{Asymptotic \yuan{normality} of $\widehat{\beta}$}

We derive the asymptotic expansion format of $\widehat{\beta}$ by applying a second order
Taylor expansion to $F(\widehat{\beta}, \widehat{\gamma})$ and showing {that various remainder terms are asymptotically negligible}.
% \commentyuan{[Yuan says:  I didn't understand this sentence] The fast convergence rate $1/n$ of $\widehat{\gamma}$ makes that the asymptotic representation of $\widehat{\beta}$
% does not depend on $\widehat{\beta}$.}

\begin{theorem}\label{Theorem-central-a}
Assume the conditions of Theorem \ref{Theorem:con} hold.
If
\[
 \kappa_n^2 b_{n1}^2 \left( \frac{ h_{n}^2 b_{n2} }{b_{n0}^3} + \sigma_n^2 \right)^2 =o\left( \frac{n}{\log n} \right),
\]
then for any fixed $i$,
\begin{equation*}
\widehat{\beta}_i- \beta_i=  v_{ii}^{-1}(d_i - \E d_i)  +  O_p\left( \frac{\kappa_n b_{n1}\log n}{ nb_{n0}} \Big(\frac{b_{n2}h_n^2}{b_{n0}^{3}}+ \sigma_n\Big)\right).
\end{equation*}
\end{theorem}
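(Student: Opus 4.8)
The plan is to Taylor-expand the estimating equation $F(\widehat\beta, \widehat\gamma)=0$ to second order about the truth $(\beta^*, \gamma^*)$ and then solve for $\widehat\beta - \beta^*$ by inverting the Jacobian. The starting observation is that $F_i(\beta^*, \gamma^*) = \sum_{j\neq i}\mu_{ij}(\beta^*,\gamma^*) - d_i = \E d_i - d_i$, so the zeroth-order term is exactly (minus) the degree deviation appearing in the claimed leading term. Writing $\delta = \widehat\beta - \beta^*$ and $\eta = \widehat\gamma - \gamma^*$, and recalling $V = V(\beta^*,\gamma^*)=\partial F/\partial\beta^T \in \mathcal{L}_n(b_{n0}, b_{n1})$, the expansion reads
\[
0 = -(d - \E d) + V\delta + \frac{\partial F(\beta^*,\gamma^*)}{\partial\gamma^T}\eta + R,
\]
where $R$ collects the second-order terms. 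Since $\pi_{ij}$ is linear in the parameters, each component is exactly $R_i=\tfrac12\sum_{j\neq i}\mu''(\tilde\pi_{ij})(\delta_i + \delta_j + z_{ij}^T\eta)^2$ for some intermediate $\tilde\pi_{ij}$. Solving gives $\delta = V^{-1}(d-\E d) - V^{-1}(\partial F/\partial\gamma^T)\eta - V^{-1}R$, and replacing $V^{-1}$ by its diagonal approximation $S$ from \eqref{definition-s} isolates the leading term $v_{ii}^{-1}(d_i - \E d_i)$. It then remains to show that the three residual pieces fall within the stated $O_p$ remainder.

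For the first residual $[(V^{-1}-S)(d - \E d)]_i$, I would invoke Lemma \ref{pro:inverse:appro}, which controls $\|V^{-1}-S\|_{\max}$, combined with a concentration argument: since $d - \E d$ is a linear form in the independent, sub-exponential centered edges (Assumption \ref{assumption-b}), a Bernstein-type tail bound shows this quantity concentrates at the scale dictated by its variance $[(V^{-1}-S)\mathrm{Cov}(F(\beta^*,\gamma^*))(V^{-1}-S)]_{ii}$, which is precisely the object defining $\sigma_n$; hence it is of order $\sigma_n\sqrt{\log n}/n$, absorbed into the $\sigma_n$ part of the remainder. For the second residual, I would bound $[V^{-1}(\partial F/\partial\gamma^T)\eta]_i$ using $\|\eta\|_\infty=\|\widehat\gamma-\gamma^*\|_\infty$ from Theorem \ref{Theorem:con}; its diagonal part $v_{ii}^{-1}\sum_{j\neq i}\mu'(\pi_{ij})\,z_{ij}^T\eta$ is a convex combination of the $z_{ij}^T\eta$ (the weights $\mu'(\pi_{ij})/v_{ii}$ are nonnegative and sum to one), so under Assumption \ref{assumption-a} it is $O(\|\eta\|_\infty)$, while the off-diagonal correction is again handled by $\|V^{-1}-S\|_{\max}$. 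For the third residual $[V^{-1}R]_i$, I would use $\|R\|_\infty = O(n\,b_{n2}(\|\delta\|_\infty^2+\|\eta\|_\infty^2))$, in which the $\delta$-term dominates, together with $\|V^{-1}\|_\infty = O(b_{n1}^2/(n b_{n0}^3))$ (itself a consequence of Lemma \ref{pro:inverse:appro}) and the consistency rate $\|\delta\|_\infty = O_p(h_n b_{n0}^{-1}\sqrt{\log n/n})$ from Theorem \ref{Theorem:con}; this produces the $b_{n2}h_n^2/b_{n0}^3$ part of the remainder.

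The main obstacle I anticipate is the circular dependence: the two residuals that dominate the remainder are expressed through $\|\widehat\gamma - \gamma^*\|_\infty$ and $\|\widehat\beta - \beta^*\|_\infty$, which become available only after invoking Theorem \ref{Theorem:con}; care is needed to verify that feeding these consistency rates back in genuinely reproduces the claimed remainder and, crucially, that the extra hypothesis $\kappa_n^2 b_{n1}^2(h_n^2 b_{n2}/b_{n0}^3 + \sigma_n^2)^2 = o(n/\log n)$ forces this remainder to be $o_p$ of the leading term $v_{ii}^{-1}(d_i-\E d_i)$, whose typical size is of order $h_n/(b_{n0}\sqrt n)$. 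A secondary technical point is that the Jacobian is evaluated at $(\beta^*,\gamma^*)$ while the Hessian in $R$ is evaluated at an intermediate point of $B_\infty(\beta^*,\epsilon_{n1})\times B_\infty(\gamma^*,\epsilon_{n2})$; the uniform curvature bounds \eqref{ineq-mu-keya}--\eqref{ineq-mu-keyc} over this neighborhood, together with the approximation \eqref{equation-H-appro}, are what legitimize the second-order control. Finally, the delicate concentration of the $(V^{-1}-S)(d-\E d)$ term is exactly why $\sigma_n$ must enter the statement in place of a cruder deterministic bound.
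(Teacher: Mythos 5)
Your strategy coincides with the paper's own proof: expand the moment equation to second order around $(\beta^*,\gamma^*)$, solve
$\widehat\beta-\beta^*=V^{-1}(d-\E d)-V^{-1}\bigl(\partial F/\partial\gamma^T\bigr)(\widehat\gamma-\gamma^*)-V^{-1}R$,
peel off the diagonal approximation $S$ to expose $v_{ii}^{-1}(d_i-\E d_i)$, and control the three residuals by (i) a variance-based (Chebyshev, in the paper) bound on $[(V^{-1}-S)(d-\E d)]_i$, which is exactly how $\sigma_n$ enters, since $\max_i\bigl|\bigl((V^{-1}-S)\,\mathrm{Cov}(F)\,(V^{-1}-S)\bigr)_{ii}\bigr|\le\sigma_n^2/n^2$; (ii) the rate for $\|\widehat\gamma-\gamma^*\|_\infty$ from Theorem \ref{Theorem:con}; and (iii) the squared rate for $\|\widehat\beta-\beta^*\|_\infty$ against the second-order term. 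One correction on the role of the extra hypothesis: the paper consumes it precisely to justify your parenthetical claim that ``the $\delta$-term dominates'' in $R$, i.e.\ that $b_{n2}\|\widehat\gamma-\gamma^*\|_1^2$ is negligible next to $b_{n2}\|\widehat\beta-\beta^*\|_\infty^2$; it is not used (and would not suffice) to make the whole remainder $o_p$ of the leading term, which the theorem does not claim.

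There is, however, a genuine quantitative gap in your third residual: you use $\|V^{-1}\|_\infty=O\bigl(b_{n1}^2/(nb_{n0}^3)\bigr)$, deduced from Lemma \ref{pro:inverse:appro} via $\|V^{-1}\|_\infty\le\|S\|_\infty+n\|V^{-1}-S\|_{\max}$. That bound is correct but too weak. Combined with $\|R\|_\infty=O_p\bigl(nb_{n2}\|\widehat\beta-\beta^*\|_\infty^2\bigr)=O_p\bigl(b_{n2}h_n^2\log n/b_{n0}^2\bigr)$ it gives $\|V^{-1}R\|_\infty=O_p\bigl(b_{n1}^2b_{n2}h_n^2\log n/(nb_{n0}^5)\bigr)$, which exceeds the claimed remainder piece $\kappa_n b_{n1}b_{n2}h_n^2\log n/(nb_{n0}^4)$ by the factor $b_{n1}/(\kappa_n b_{n0})$. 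Nothing in the hypotheses forces $\kappa_n\gtrsim b_{n1}/b_{n0}$: in the Poisson application $b_{n1}/b_{n0}=e^{2q_n}$ diverges while $\kappa_n\asymp\lambda_n^{-1}$ does not compensate, so your bound genuinely fails to deliver the stated rate there. The same loss recurs in the off-diagonal piece of your $\gamma$-residual, where you again fall back on $\|V^{-1}-S\|_{\max}$. The paper avoids this by invoking Lemma \ref{lemma-tight-V}, which gives the sharp bound $\|V^{-1}\|_\infty=O\bigl(1/(nb_{n0})\bigr)$ directly; with that single substitution (applied to both residuals) your argument reproduces the theorem's remainder.
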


% \commentyuan{[Yuan says: I didn't understand this sentence] We use Lyapunov's conditions to illustrate the  \color{red}{correctness} of Theorem \ref{Theorem-central-a}.}
Let $u_{ii}= \sum_{j\neq i} \mathrm{Var}( a_{ij})$. If
$ \sum_{j\neq i} \E (a_{ij}-\E a_{ij})^3 / v_{ii}^{3/2}  \to 0$,
then, by the Lyapunov's central limit theorem,
$u_{ii}^{-1/2} \{d_i - \E(d_i)\}$ converges in distribution to the standard normal distribution.
When considering the asymptotic behaviors of the vector $(d_1, \ldots, d_r)$ with a fixed $r$, one could replace the degrees $d_1, \ldots, d_r$ by the independent random variables
$\tilde{d}_i=a_{i, r+1} + \cdots + a_{in}$, $i=1,\ldots,r$.
Therefore, we have the following lemma.

\begin{proposition}\label{pro:central:poisson}
\yuan{Under the conditions of Theorem \ref{Theorem-central-a}}, if $ u_{ii}^{-3/2} \sum_{j:j\neq i} \E (a_{ij}-\E a_{ij})^3  \to 0$, then we have: \\
(1)For any fixed $r\ge 1$,  $(d_1 - \E (d_1), \ldots, d_r - \E (d_r))$ are
asymptotically independent and normally distributed with \yuan{mean zero and marginal} variances $u_{11}, \ldots, u_{rr}$,
respectively. \\
(2)More generally, $\sum_{i=1}^n c_i(d_i-\E(d_i))/\sqrt{u_{ii}}$ is asymptotically normally distributed with mean zero
and variance $\sum_{i=1}^\infty c_i^2$ whenever $c_1, c_2, \ldots$ are fixed constants, and \yuan{$\sum_{i=1}^\infty c_i^2<\infty$}.
\end{proposition}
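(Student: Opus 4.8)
The plan is to exploit the fact that although the degrees $d_1,\dots,d_r$ share the edge variables $a_{ij}$ with $1\le i<j\le r$ and are therefore not independent, these shared edges number only $O(r^2)=O(1)$ and thus contribute a vanishing fraction of the $\Theta(n)$-order fluctuation of each $d_i$. Concretely, for $i=1,\dots,r$ I would set $\tilde d_i := a_{i,r+1}+\cdots+a_{in}$, which depends only on edges joining node $i$ to nodes outside $\{1,\dots,r\}$. For distinct $i$ these edge sets are disjoint, so $\tilde d_1,\dots,\tilde d_r$ are \emph{genuinely independent}. The whole argument then rests on replacing $d_i$ by $\tilde d_i$, treating the independent surrogates by classical CLT, and controlling the replacement error.

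For part (1), I would first record that by edge independence $u_{ii}=\sum_{j\neq i}\mathrm{Var}(a_{ij})$ and, under Assumptions \ref{assumption-a}--\ref{assumption-b} together with \eqref{ineq-mu-keya}, $u_{ii}=\Theta(n)\to\infty$. Writing $\tilde u_{ii}:=\mathrm{Var}(\tilde d_i)=u_{ii}-\sum_{j\le r,\,j\neq i}\mathrm{Var}(a_{ij})=u_{ii}(1+o(1))$, I would verify that the hypothesized Lyapunov ratio $u_{ii}^{-3/2}\sum_{j\neq i}\E|a_{ij}-\E a_{ij}|^3\to0$ is, up to the same $O(1)$ correction, exactly the Lyapunov condition for the sum $\tilde d_i$; hence $(\tilde d_i-\E\tilde d_i)/\sqrt{\tilde u_{ii}}\Rightarrow N(0,1)$, and since $\tilde u_{ii}/u_{ii}\to1$ also $(\tilde d_i-\E\tilde d_i)/\sqrt{u_{ii}}\Rightarrow N(0,1)$. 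Independence of $\tilde d_1,\dots,\tilde d_r$ makes the characteristic function of the normalized vector factorize, giving joint convergence to a product of standard Gaussians. Finally, the replacement error $(d_i-\tilde d_i)-\E(d_i-\tilde d_i)=\sum_{j\le r,\,j\neq i}(a_{ij}-\E a_{ij})$ has variance $O(1)$, so after dividing by $\sqrt{u_{ii}}\to\infty$ it is $o_p(1)$; Slutsky's theorem then transfers the limit to $\big((d_i-\E d_i)/\sqrt{u_{ii}}\big)_{i=1}^r$, which is claim (1).

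For part (2) I would use a standard truncation-and-approximation scheme. Given $\varepsilon>0$, choose $R$ with $\sum_{i>R}c_i^2<\varepsilon$ and split $T_n:=\sum_{i=1}^n c_i(d_i-\E d_i)/\sqrt{u_{ii}}$ into a head $T_n^{(R)}:=\sum_{i\le R}$ and a tail $T_n^{(>R)}:=\sum_{R<i\le n}$. By part (1) and the Cram\'er--Wold device, $T_n^{(R)}\Rightarrow N\big(0,\sum_{i\le R}c_i^2\big)$ as $n\to\infty$, and $\sum_{i\le R}c_i^2\to\sum_{i=1}^\infty c_i^2$ as $R\to\infty$. For the tail I would bound its variance: since $\mathrm{Cov}(d_i,d_j)=\mathrm{Var}(a_{ij})$ for $i\neq j$, the normalized cross-covariances are $\mathrm{Var}(a_{ij})/\sqrt{u_{ii}u_{jj}}=O(1/n)$, whence
\[
\mathrm{Var}\big(T_n^{(>R)}\big)\le \sum_{R<i\le n}c_i^2+\frac{O(1)}{n}\Big(\sum_{R<i\le n}|c_i|\Big)^2\le \varepsilon+\frac{O(1)}{n}\cdot n\sum_{R<i\le n}c_i^2=O(\varepsilon),
\]
using Cauchy--Schwarz $\big(\sum_{R<i\le n}|c_i|\big)^2\le (n-R)\sum_{R<i\le n}c_i^2$ in the last step. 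Chebyshev then gives $T_n^{(>R)}=O_p(\sqrt\varepsilon)$ uniformly in $n$, and a standard approximation lemma (convergence of $T_n^{(R)}$ for each fixed $R$, convergence of its Gaussian limit as $R\to\infty$, and the uniformly small discrepancy $T_n-T_n^{(R)}$) yields $T_n\Rightarrow N\big(0,\sum_{i=1}^\infty c_i^2\big)$.

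The main obstacle is the edge-sharing dependence among the degrees, which blocks a direct appeal to CLTs for independent variables. In part (1) it dissolves cleanly through the disjoint-support surrogates $\tilde d_i$; the delicate point in part (2) is showing that the \emph{accumulated} off-diagonal covariances in the tail do not inflate the variance. This is exactly where the $O(1/n)$ decay of the normalized covariances, combined with the bound $\big(\sum_{i>R}|c_i|\big)^2\le n\sum_{i>R}c_i^2$, is essential: it converts the naive $O(n)$ count of cross terms into an $O(\varepsilon)$ contribution.
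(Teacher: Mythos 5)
Your part (1) is essentially the paper's own argument: the paper likewise replaces $d_1,\dots,d_r$ by the genuinely independent surrogates $\tilde d_i=a_{i,r+1}+\cdots+a_{in}$, invokes Lyapunov's CLT for each degree (a sum of independent edge variables), and lets the $O(1)$-variance discrepancy $d_i-\tilde d_i$ wash out after normalization by $\sqrt{u_{ii}}$. Your part (2) also follows the paper's skeleton: finite-section convergence from part (1), a tail-variance bound uniform in $n$, and the approximation theorem (Theorem 4.2 of Billingsley) to pass to the infinite series --- the paper's display \eqref{eqn::temp-2} is precisely your requirement $\lim_{R\to\infty}\limsup_{n}\mathrm{Var}\big(T_n^{(>R)}\big)=0$.

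The one genuine gap is inside your tail-variance estimate. You assert that the normalized cross-covariances satisfy $\mathrm{Var}(a_{ij})/\sqrt{u_{ii}u_{jj}}=O(1/n)$, but nothing in the hypotheses of Theorem \ref{Theorem-central-a} delivers this: conditions \eqref{ineq-mu-keya}--\eqref{ineq-mu-keyc} constrain the mean function $\mu'$, not the edge variances (outside exponential families the two are unrelated), and Assumption \ref{assumption-b} bounds variances only from above. If a single pair $(i,j)$ has $\mathrm{Var}(a_{ij})$ much larger than all the other edge variances incident to $i$ and $j$, then $u_{ii}\approx u_{jj}\approx \mathrm{Var}(a_{ij})$ and the normalized covariance is of order one, so the $O(1/n)$ claim --- and with it your Cauchy--Schwarz step, which spends a full factor of $n$ --- collapses. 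The repair requires no extra assumption and is exactly what the paper does. Since $u_{ii}=\sum_{j\neq i}\mathrm{Var}(a_{ij})$ (diagonal balance), the inequality $|c_ic_j|\,u_{ii}^{-1/2}u_{jj}^{-1/2}\le \tfrac12\big(c_i^2/u_{ii}+c_j^2/u_{jj}\big)$ gives
\begin{equation*}
\Bigg|\sum_{\substack{i\neq j \\ i,j>R}} c_ic_j\,\frac{\mathrm{Var}(a_{ij})}{\sqrt{u_{ii}u_{jj}}}\Bigg|
\;\le\;
\sum_{\substack{i\neq j \\ i,j>R}}\frac{\mathrm{Var}(a_{ij})}{2}\left(\frac{c_i^2}{u_{ii}}+\frac{c_j^2}{u_{jj}}\right)
\;=\;
\sum_{i>R}\frac{c_i^2}{u_{ii}}\sum_{\substack{j>R\\ j\neq i}}\mathrm{Var}(a_{ij})
\;\le\;
\sum_{i>R}c_i^2,
\end{equation*}
hence $\mathrm{Var}\big(T_n^{(>R)}\big)\le 2\sum_{i>R}c_i^2$; this is the paper's statement that all eigenvalues of the covariance matrix of the normalized degrees are bounded by $2$. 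With this substitution your proof goes through unchanged. (Your assertion in part (1) that $u_{ii}=\Theta(n)$ has the same flavor of unsupported variance lower bound, but there you only need $u_{ii}\to\infty$ relative to the $O(1)$ shared-edge variance, which is the implicit premise of the paper's replacement argument as well.)
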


Part (2) follows from part (1) and the fact that
\begin{equation}
    \lim_{r\to\infty} \limsup_{t\to\infty}
    \mathrm{Var}\left( \sum_{k=r+1}^n c_i \frac{ d_i - \E (d_i) }{\sqrt{u_{ii}}}\right)=0
    \label{eqn::temp-2}
\end{equation}
by Theorem 4.2 of \citet{Billingsley:1995}.
\yuan{To see \eqref{eqn::temp-2}}, it suffices to show that the eigenvalues of
the covariance matrix of $(d_i - \E (d_i))/u_{ii}^{1/2}$, $i=r+1, \ldots, n$ are bounded by 2 for all $r<n$, \yuan{which is implied} by the well-known Perron-Frobenius theorem:
if $A$ is a symmetric positive definite matrix with diagonal elements equaling to $1$, with nonnegative off-diagonal elements,
then its largest eigenvalue is less than $2$.
In view of Proposition \ref{pro:central:poisson}, we immediately have the following corollary.

\begin{corollary}
\label{coro-theorema-a}
Assume that conditions in Theorem \ref{Theorem-central-a} hold. If $ u_{ii}^{-3/2} \sum_{j\neq i} \E (a_{ij}-\E a_{ij})^3  \to 0$,
then for fixed $k$ the vector $( u_{11}^{-1/2}v_{11} (\widehat{\beta}_1 - \beta^*), \ldots, u_{kk}^{-1/2}v_{kk} (\widehat{\beta}_k - \beta^*_k)$
converges in distribution to the $k$-dimensional multivariate standard normal distribution.
\end{corollary}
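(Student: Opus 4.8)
The plan is to combine the stochastic expansion of $\widehat\beta$ from Theorem \ref{Theorem-central-a} with the joint central limit theorem for the degree sequence in Proposition \ref{pro:central:poisson}, treating the expansion remainder as an asymptotically negligible perturbation and concluding by Slutsky's theorem.

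Fix $k$. For each $i\le k$, Theorem \ref{Theorem-central-a} supplies
\[
\widehat\beta_i-\beta_i^*=v_{ii}^{-1}(d_i-\E d_i)+r_{n,i},
\qquad
r_{n,i}=O_p\!\left(\frac{\kappa_n b_{n1}\log n}{n b_{n0}}\Big(\frac{b_{n2}h_n^2}{b_{n0}^{3}}+\sigma_n\Big)\right).
\]
Multiplying by the studentizing factor $u_{ii}^{-1/2}v_{ii}$ gives
\[
u_{ii}^{-1/2}v_{ii}(\widehat\beta_i-\beta_i^*)
=u_{ii}^{-1/2}(d_i-\E d_i)+u_{ii}^{-1/2}v_{ii}\,r_{n,i}.
\]
The leading term is precisely the object whose joint behavior is governed by Proposition \ref{pro:central:poisson}(1): under the Lyapunov-type moment hypothesis $u_{ii}^{-3/2}\sum_{j\neq i}\E(a_{ij}-\E a_{ij})^3\to0$, the vector $(u_{11}^{-1/2}(d_1-\E d_1),\ldots,u_{kk}^{-1/2}(d_k-\E d_k))$ converges in distribution to the $k$-dimensional standard normal.

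The substantive step is to show that each remainder $u_{ii}^{-1/2}v_{ii}\,r_{n,i}=o_p(1)$. I would control the studentizing factor through $v_{ii}=\sum_{j\neq i}\mu'(\pi_{ij})\le (n-1)b_{n1}$ and, in the exponential-family setting where $\mathrm{Var}(a_{ij})=\mu'(\pi_{ij})$, the matching lower bound $u_{ii}=\sum_{j\neq i}\mathrm{Var}(a_{ij})\ge (n-1)b_{n0}$, so that $u_{ii}^{-1/2}v_{ii}=O(\sqrt{n}\,b_{n1}b_{n0}^{-1/2})$. Hence
\[
u_{ii}^{-1/2}v_{ii}\,r_{n,i}
=O_p\!\left(\frac{\kappa_n b_{n1}^2\log n}{\sqrt{n}\,b_{n0}^{3/2}}\Big(\frac{b_{n2}h_n^2}{b_{n0}^{3}}+\sigma_n\Big)\right).
\]
It then remains to verify that this tends to $0$, which is a direct order comparison: it is implied by the growth condition $\kappa_n^2 b_{n1}^2(h_n^2 b_{n2}b_{n0}^{-3}+\sigma_n^2)^2=o(n/\log n)$ imposed in Theorem \ref{Theorem-central-a}, combined with the bound $\sigma_n^2=O(b_{n1}^2 b_{n0}^{-3})$ recorded after Theorem \ref{Theorem:con}. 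I expect this remainder estimate to be the only genuine obstacle, since it requires reconciling the slightly different arrangements of the rate parameters $b_{n0},b_{n1},b_{n2},\kappa_n,h_n,\sigma_n$ across the two hypotheses; in the regimes of primary interest (bounded parameters, where these quantities are constants) it is immediate.

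Finally, I would assemble the three ingredients. The leading-term vector converges to the $k$-dimensional standard normal; the remainder vector converges to zero in probability coordinatewise; and the multivariate Slutsky theorem then yields convergence of $(u_{11}^{-1/2}v_{11}(\widehat\beta_1-\beta_1^*),\ldots,u_{kk}^{-1/2}v_{kk}(\widehat\beta_k-\beta_k^*))$ to the $k$-dimensional standard normal, as claimed. Everything beyond the remainder control is bookkeeping around the two previously established results.
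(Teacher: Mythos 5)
Your route is exactly the paper's: the paper obtains this corollary by combining the expansion of Theorem \ref{Theorem-central-a} with Proposition \ref{pro:central:poisson} and an implicit Slutsky step, offering no argument beyond the phrase ``in view of Proposition \ref{pro:central:poisson}, we immediately have.'' Your decomposition, the use of Proposition \ref{pro:central:poisson}(1) for the leading term, and the Slutsky assembly are precisely the intended proof, and you are more explicit than the source.

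However, the step you yourself flag as ``the only genuine obstacle''---that the studentized remainder vanishes---does not close the way you claim, and this is a genuine gap. You need
\begin{equation*}
u_{ii}^{-1/2}v_{ii}\,r_{n,i}
=O_p\left(\frac{\kappa_n b_{n1}^2\log n}{\sqrt{n}\,b_{n0}^{3/2}}\Big(\frac{b_{n2}h_n^2}{b_{n0}^{3}}+\sigma_n\Big)\right)=o_p(1),
\end{equation*}
i.e.\ $\kappa_n b_{n1}^2 b_{n0}^{-3/2}\big(b_{n2}h_n^2b_{n0}^{-3}+\sigma_n\big)=o(\sqrt{n}/\log n)$, whereas the hypothesis of Theorem \ref{Theorem-central-a} only yields $\kappa_n b_{n1}\big(b_{n2}h_n^2b_{n0}^{-3}+\sigma_n^2\big)=o\big(\sqrt{n/\log n}\big)$. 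These differ by a factor of order $(b_{n1}/b_{n0}^{3/2})\sqrt{\log n}$, which does not tend to zero even when all the $b$'s are constants, so no ``direct order comparison'' can bridge them. Concretely, take $b_{n0},b_{n1},b_{n2},h_n,\sigma_n$ all constant and $\kappa_n=\sqrt{n}(\log n)^{-3/4}$: then both \eqref{eq-theorema-ca} and the condition of Theorem \ref{Theorem-central-a} hold, yet your remainder bound is of order $(\log n)^{1/4}\to\infty$. The conclusion therefore needs either a strengthened rate condition (roughly, $(\log n)^{2}$ in place of $\log n$ in the Theorem \ref{Theorem-central-a} hypothesis) or the bounded-parameter regime, where, as you note, everything is immediate. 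To be fair, the paper never verifies this step either, so the defect is inherited from the source rather than introduced by you; but the implication you assert is false as stated. A second, smaller point: your lower bound $u_{ii}\ge (n-1)b_{n0}$ invokes the exponential-family identity $\mathrm{Var}(a_{ij})=\mu^\prime(\pi_{ij})$, which is not among the corollary's hypotheses; in the general model \eqref{model-2} the comparison between $u_{ii}$ and $v_{ii}$ (equivalently between $u_{ii}$ and $b_{n0}$) must be assumed separately, since without it the studentizing factor $u_{ii}^{-1/2}v_{ii}$ cannot be controlled at all.
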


\subsection{Asymptotic \yuan{normality} of $\widehat{\gamma}$}

Let $T_{ij} = e_i+e_j$, where $e_i\in\mathbb R^n$ is all zero except its $i$th element equals 1.
 Define
\begin{equation*}
\begin{array}{c}
V(\beta, \gamma)=\frac{ \partial F(\beta, \gamma) }{ \partial \beta^T }, ~~
V_{Q\beta}(\beta, \gamma) = \frac{ \partial Q(\beta, \gamma) }{ \partial \beta^T}, \\
s_{ij}(\beta, \gamma) = (a_{ij}-\E a_{ij}) ( z_{ij} - V_{Q\beta}(\beta, \gamma) [V(\beta,\gamma)]^{-1} T_{ij}).
\end{array}
\end{equation*}
When evaluating $H(\beta,\gamma)$, $Q(\beta, \gamma)$, $V(\beta, \gamma)$ and $V_{Q\beta}(\beta, \gamma)$ at their true values
$(\beta^*, \gamma^*)$, we omit the arguments $\beta^*, \gamma^*$, i.e., $V=V(\beta^*, \gamma^*)$.
\yuan{Recall we earlier defined} $N=n(n-1)$.  \yuan{Also define}
\[
\bar{H}=  \lim_{n\to\infty} \frac{1}{N} H( \beta^*, \gamma^*),
\]
where \yuan{we recall the definition of} $H(\beta, \gamma)$ \yuan{from} \eqref{definition-H}.
% Assuming the above limit exists \commentyuan{[Yuan says: need some explanations! e.g. under which special example, this hold?]},
We have

\begin{theorem}
\label{theorem-central-b}
Let $U=\mathrm{Var}(d)$.
% and $ e_k$ is an $n$-dimensional column vector with $k$th element $1$ and others zeros.
Assume the conditions in Theorem \ref{Theorem:con} hold.
If $b_{n3}h_n^3 b_{n0}^{-3} = o( n^{1/2}/(\log n)^{3/2})$,
then we have
\[
\sqrt{N}(\widehat{\gamma}- \gamma^*) = \bar{H}^{-1} B_* + \bar{H}^{-1} \times \frac{1}{\sqrt{N}}  \sum_{i< j}
s_{ij} (\beta^*, \gamma^*) + o_p(1),
\]
where
\begin{equation}\label{defintion-Bias}
B_*=\lim_{n\to\infty} \frac{1}{2\sqrt{N}} \sum_{k=1}^n \left[\frac{ \partial^2 Q(\beta^*, \gamma^*) }{ \partial \beta_k \partial \beta^T}
V^{-1} U V^{-1} e_k \right].
\end{equation}
\end{theorem}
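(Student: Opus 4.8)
The plan is to derive the representation from the concentrated estimating equation $Q_c(\widehat{\gamma})=0$ by two successive Taylor expansions: first in $\gamma$ to linearize around $\gamma^*$, and then in $\beta$ to unfold the profile $\widehat{\beta}_{\gamma^*}$. I would begin with the mean value theorem in $\gamma$: since $Q_c(\widehat{\gamma})=0$, there is a point $\tilde{\gamma}$ on the segment joining $\widehat{\gamma}$ and $\gamma^*$ with $0=Q_c(\gamma^*)+Q_c'(\tilde{\gamma})(\widehat{\gamma}-\gamma^*)$. Using the closed form \eqref{equation:Qc-derivative}, the curvature approximation \eqref{equation-H-appro}, and the consistency statements $\|\widehat{\gamma}-\gamma^*\|_\infty=o_p(1)$ and $\|\widehat{\beta}_{\tilde{\gamma}}-\beta^*\|_\infty=o_p(1)$ that follow from Theorem \ref{Theorem:con}, I would show $N^{-1}Q_c'(\tilde{\gamma})=\bar{H}+o_p(1)$, where $\bar{H}$ is invertible by the positive-definiteness assumption on $H$. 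This reduces the entire problem to the single random vector $N^{-1/2}Q_c(\gamma^*)$, through $\sqrt{N}(\widehat{\gamma}-\gamma^*)=-\bar{H}^{-1}N^{-1/2}Q_c(\gamma^*)+o_p(1)$.

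The core of the argument is a second-order expansion of $Q_c(\gamma^*)=Q(\widehat{\beta}_{\gamma^*},\gamma^*)$ in $\beta$ around $\beta^*$. Writing $\Delta:=\widehat{\beta}_{\gamma^*}-\beta^*$, Taylor's theorem gives $Q(\widehat{\beta}_{\gamma^*},\gamma^*)=Q(\beta^*,\gamma^*)+V_{Q\beta}\Delta+\tfrac{1}{2}\sum_{k}\Delta_k\tfrac{\partial^2 Q(\beta^*,\gamma^*)}{\partial\beta_k\partial\beta^T}\Delta+R$, where $R$ is the third-order remainder. Expanding the profile equation $F_{\gamma^*}(\widehat{\beta}_{\gamma^*})=0$ by the same linearization that underlies Theorem \ref{Theorem-central-a} yields $\Delta=V^{-1}(d-\E d)$ up to lower-order terms. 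Substituting this linear part into the zeroth- and first-order terms and using $d-\E d=\sum_{i<j}(a_{ij}-\E a_{ij})T_{ij}$, the two terms collapse into $-\sum_{i<j}s_{ij}(\beta^*,\gamma^*)$ by the very definition of $s_{ij}$; propagated through $-\bar{H}^{-1}N^{-1/2}$ this produces the stochastic term $\bar{H}^{-1}N^{-1/2}\sum_{i<j}s_{ij}$. The quadratic term is where the bias originates: replacing $\Delta\Delta^T$ by $V^{-1}(d-\E d)(d-\E d)^TV^{-1}$ and then by its mean $V^{-1}UV^{-1}$ with $U=\mathrm{Var}(d)$ turns its deterministic part into $\tfrac{1}{2}\sum_k\tfrac{\partial^2 Q}{\partial\beta_k\partial\beta^T}V^{-1}UV^{-1}e_k$, which at the $\sqrt{N}$ scale matches $B_*$ of \eqref{defintion-Bias}. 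Collecting the stochastic and deterministic contributions then yields the asserted representation $\bar{H}^{-1}B_*+\bar{H}^{-1}N^{-1/2}\sum_{i<j}s_{ij}$.

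It remains to certify that the discarded pieces are $o_p(1)$ after multiplication by $\bar{H}^{-1}N^{-1/2}$. The binding one is the third-order remainder $R$: bounding $|\mu'''|\le b_{n3}$ via \eqref{ineq-mu-keyc}, summing over the $\binom{n}{2}$ edges, and inserting $\|\Delta\|_\infty=O_p(h_n b_{n0}^{-1}(\log n/n)^{1/2})$ from Theorem \ref{Theorem:con} gives $\|R\|=O_p\big(b_{n3}h_n^3 b_{n0}^{-3}\,n^{1/2}(\log n)^{3/2}\big)$, which is $o_p(\sqrt{N})$ exactly under the stated hypothesis $b_{n3}h_n^3 b_{n0}^{-3}=o(n^{1/2}/(\log n)^{3/2})$; I would similarly control the error from replacing the higher-order part of $\Delta$ and from replacing the random quadratic form by its expectation. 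The main obstacle is precisely this bias, an incidental-parameter effect: each coordinate of the nuisance $\beta$ is estimated at only the $O_p((\log n/n)^{1/2})$ rate, yet the quadratic form aggregates $\Theta(n^2)$ edge contributions, so its deterministic part sits at the $\sqrt{N}$ scale and cannot be dropped while its centered fluctuation sits right at the boundary of negligibility. Isolating the deterministic limit $B_*$, discarding the fluctuation, and reconciling every second-order contribution that profiling produces is the crux of the proof.
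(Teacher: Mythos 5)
Your proposal follows essentially the same route as the paper's own proof: a mean value expansion of $Q_c$ in $\gamma$ reducing the problem to $-\bar{H}^{-1}N^{-1/2}Q_c(\gamma^*)$, then a third-order Taylor expansion of $Q(\widehat{\beta}_{\gamma^*},\gamma^*)$ in $\beta$ whose zeroth- and first-order terms collapse (via $\Delta\approx V^{-1}(d-\E d)$) into $\sum_{i<j}s_{ij}$, whose quadratic term yields the bias $B_*$, and whose remainder is controlled by exactly the bound $O_p\bigl(b_{n3}h_n^3 b_{n0}^{-3}n^{1/2}(\log n)^{3/2}\bigr)$ that the hypothesis makes negligible. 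The only difference is cosmetic: where the paper cites Graham (2017) for the $S_1$ and $S_2$ computations, you spell out the mechanism (substituting the linearization of $\Delta$ and replacing the quadratic form by its expectation $V^{-1}UV^{-1}$), which is precisely what that citation covers.
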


Note that $s_{ij}(\beta, \gamma)$, $i<j$, are independent vectors.
By Lyapunov's central limit theorem, we have
\begin{proposition}\label{pro:th4-b}
Let $\lambda_{ij}= \mathrm{Var} (a_{ij})$ and $\tilde{z}_{ij}= z_{ij} - V_{Q\beta} V^{-1} T_{ij}$.
For any nonzero vector $c=(c_1, \ldots c_p)^T$, if
\begin{equation}\label{eq-lyyaponu-condition}
\frac{ \sum_{i<j } (c^T \tilde{z}_{ij} )^3 \lambda_{ij}^3 }{ [\sum_{i <j } (c^T \tilde{z}_{ij} )^2 \lambda_{ij} ]^{3/2} } = o(1),
\end{equation}
then $(c^T \Sigma c)^{-1/2}  \sum_{i< j}
\tilde{s}_{\gamma_{ij}} (\beta^*, \gamma^*)$ converges in distribution to the standard normal distribution, where
$\Sigma= \mathrm{Cov}( Q - V_{Q\beta} V^{-1} H) $.
\end{proposition}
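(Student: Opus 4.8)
The plan is to read Proposition~\ref{pro:th4-b} as a textbook application of Lyapunov's central limit theorem to a triangular array of independent mean-zero scalars obtained by projecting the summands $s_{ij}(\beta^*,\gamma^*)$ onto the fixed direction $c$. First I would fix $c\neq 0$ and set, for each pair $1\le i<j\le n$,
\[
X_{ij}:=c^T s_{ij}(\beta^*,\gamma^*)=(a_{ij}-\E a_{ij})\,(c^T\tilde z_{ij}),
\qquad \tilde z_{ij}=z_{ij}-V_{Q\beta}V^{-1}T_{ij}.
\]
The structural point, already noted just before the proposition, is that the coefficients $\tilde z_{ij}$ are \emph{deterministic}: in $Q(\beta,\gamma)=\sum_{i<j}z_{ij}(\mu_{ij}-a_{ij})$ the data $a_{ij}$ enter only additively, so $V_{Q\beta}=\partial Q/\partial\beta^T$ is free of randomness, and $V=\partial F/\partial\beta^T$ depends only on the covariates and the fixed true parameters. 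Hence the sole randomness in $X_{ij}$ lives in $a_{ij}-\E a_{ij}$, and since distinct edges are generated independently under model~\eqref{model-2}, the family $\{X_{ij}\}_{i<j}$ consists of independent variables with $\E X_{ij}=0$.

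Next I would assemble the first two moments. Each summand has variance $\mathrm{Var}(X_{ij})=(c^T\tilde z_{ij})^2\lambda_{ij}$ with $\lambda_{ij}=\mathrm{Var}(a_{ij})$, so the total variance is $s_N^2:=\sum_{i<j}\mathrm{Var}(X_{ij})=\sum_{i<j}(c^T\tilde z_{ij})^2\lambda_{ij}$, which is exactly the denominator in~\eqref{eq-lyyaponu-condition}. I would then identify $s_N^2$ with $c^T\Sigma c$. Expanding
\[
\sum_{i<j}s_{ij}=\sum_{i<j}(a_{ij}-\E a_{ij})z_{ij}-V_{Q\beta}V^{-1}\sum_{i<j}(a_{ij}-\E a_{ij})T_{ij},
\]
and using that the $k$th coordinate of $\sum_{i<j}(a_{ij}-\E a_{ij})T_{ij}$ equals $\sum_{j\ne k}(a_{kj}-\E a_{kj})=d_k-\E d_k$, one gets $\sum_{i<j}s_{ij}=-(Q-V_{Q\beta}V^{-1}F)$ evaluated at the truth (so the $H$ in the definition of $\Sigma$ should read $F$). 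Since the $s_{ij}$ are independent, $\mathrm{Cov}\big(\sum_{i<j}s_{ij}\big)=\sum_{i<j}\tilde z_{ij}\tilde z_{ij}^T\lambda_{ij}=\Sigma$, whence $c^T\Sigma c=s_N^2$. Thus $(c^T\Sigma c)^{-1/2}c^T\sum_{i<j}s_{ij}=s_N^{-1}\sum_{i<j}X_{ij}$ is precisely the Lyapunov-standardized sum, and a Cram\'er--Wold argument over $c$ would upgrade the scalar limit to joint normality if needed.

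Finally I would verify the Lyapunov condition and conclude. With $\delta=1$ it requires $s_N^{-3}\sum_{i<j}\E|X_{ij}|^3\to0$, where $\E|X_{ij}|^3=|c^T\tilde z_{ij}|^3\,\E|a_{ij}-\E a_{ij}|^3$. Applying the classical Lyapunov CLT to $\{X_{ij}\}_{i<j}$ then yields $s_N^{-1}\sum_{i<j}X_{ij}\Rightarrow N(0,1)$, which is the assertion. The one genuinely delicate step is matching the stated hypothesis~\eqref{eq-lyyaponu-condition} to this classical ratio: the classical numerator carries the third absolute \emph{central} moment $\E|a_{ij}-\E a_{ij}|^3$, so I would invoke the sub-exponential Assumption~\ref{assumption-b} to bound this central moment by a suitable power of $\lambda_{ij}$ (equivalently by $h_{ij}^3$), and then check that the exponents line up so that~\eqref{eq-lyyaponu-condition} indeed forces the true Lyapunov ratio to vanish. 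This moment bookkeeping, rather than the CLT itself, is where the real care is needed.
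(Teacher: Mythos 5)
Your proposal is, in outline, exactly the paper's proof: the paper disposes of Proposition \ref{pro:th4-b} in two lines, observing that the $s_{ij}(\beta^*,\gamma^*)$, $i<j$, are independent (the coefficients $\tilde z_{ij}$ being nonrandom, as you argue) and then invoking Lyapunov's central limit theorem. Everything you add beyond that---the identification $c^T\Sigma c=\sum_{i<j}(c^T\tilde z_{ij})^2\lambda_{ij}$, the reading of the symbol $H$ in $\Sigma=\mathrm{Cov}(Q-V_{Q\beta}V^{-1}H)$ as $F$ (equivalently $d-\E d$; the sign is immaterial inside a covariance), and the Cram\'er--Wold remark---is correct detail that the paper omits.

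The caveat concerns the step you yourself flag as delicate, and there your proposed repair cannot work as described. You want to bound the third absolute central moment $\E|a_{ij}-\E a_{ij}|^3$ \emph{above} by a power of $\lambda_{ij}$, so that \eqref{eq-lyyaponu-condition} implies the classical Lyapunov ratio vanishes. Jensen's inequality runs the other way: $\E|a_{ij}-\E a_{ij}|^3\ge \lambda_{ij}^{3/2}$, and no bound of the form $\E|a_{ij}-\E a_{ij}|^3\le C\lambda_{ij}^3$ holds in general (for Bernoulli edges with success probability $p\to 0$ the left side is of order $p$, the right of order $p^3$); sub-exponentiality gives $\E|a_{ij}-\E a_{ij}|^3\le Ch_{ij}^3$, but $h_{ij}$ is not controlled by $\lambda_{ij}$ either. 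So the hypothesis as printed---with $\lambda_{ij}^3$, and with $(c^T\tilde z_{ij})^3$ rather than $|c^T\tilde z_{ij}|^3$---is strictly weaker than Lyapunov's condition in low-variance regimes and does not by itself deliver the CLT; the two conditions agree (up to constants) only when the $\lambda_{ij}$ and the third central moments are bounded away from zero and infinity, e.g.\ under bounded parameters. This is a defect of the paper's statement rather than of your argument: the paper's one-line proof glosses over precisely this point. The clean fix is to restate the hypothesis as the standard Lyapunov ratio, $\sum_{i<j}|c^T\tilde z_{ij}|^3\,\E|a_{ij}-\E a_{ij}|^3\big/\big[\sum_{i<j}(c^T\tilde z_{ij})^2\lambda_{ij}\big]^{3/2}=o(1)$, under which your argument closes completely.
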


In view of Proposition \ref{pro:th4-b} and Theorem \ref{theorem-central-b},
we immediately have
\begin{corollary}\label{coro-theorem-b}
Assume the conditions in Theorem \ref{theorem-central-b} and \eqref{eq-lyyaponu-condition} hold.
Then
\begin{equation}
    \sqrt{N}c^T (\widehat{\gamma} - \gamma)
    \stackrel{d}\to
    N\Big(
        \bar{H}^{-1}B_*,
        c^T \bar{H}^T \Sigma \bar{H} c
    \Big)
\end{equation}
\end{corollary}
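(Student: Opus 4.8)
The plan is to read the limiting law directly off the asymptotic linear representation furnished by Theorem \ref{theorem-central-b}, convert the CLT of Proposition \ref{pro:th4-b} into a statement about the relevant projected score, and glue the pieces together with Slutsky's theorem. First I would left-multiply the representation
\[
\sqrt{N}(\widehat{\gamma}- \gamma^*) = \bar{H}^{-1} B_* + \bar{H}^{-1}\cdot \frac{1}{\sqrt{N}}\sum_{i<j} s_{ij}(\beta^*,\gamma^*) + o_p(1)
\]
by the fixed row vector $c^T$, obtaining the scalar decomposition
\[
\sqrt{N}\,c^T(\widehat{\gamma}- \gamma^*) = c^T\bar{H}^{-1} B_* + \frac{1}{\sqrt{N}}\sum_{i<j} (\bar{H}^{-T}c)^T s_{ij}(\beta^*,\gamma^*) + o_p(1).
\]
The three summands are, respectively, a deterministic bias, a centered sum of the independent vectors $s_{ij}(\beta^*,\gamma^*)$ projected onto the fixed direction $\bar{H}^{-T}c$, and a term that is negligible by Theorem \ref{theorem-central-b}.

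The central step is the Gaussian limit of the middle term. Since $\bar{H}=\lim_{n} N^{-1}H(\beta^*,\gamma^*)$ is a fixed invertible matrix, $\bar{H}^{-T}c$ is again a fixed nonzero vector, so I would apply Proposition \ref{pro:th4-b} with the generic direction taken to be $\bar{H}^{-T}c$. This yields
\[
\frac{1}{\sqrt{N}}\sum_{i<j} (\bar{H}^{-T}c)^T s_{ij}(\beta^*,\gamma^*) \stackrel{d}{\to} N\!\big(0,\ c^T\bar{H}^{-1}\Sigma\,\bar{H}^{-T}c\big),
\]
with $\Sigma=\mathrm{Cov}(Q - V_{Q\beta}V^{-1}H)$ the limiting covariance of the normalized score; this quadratic form is exactly the sandwich variance displayed in the statement.

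Finally I would combine the three pieces by Slutsky's theorem: the deterministic bias $c^T\bar{H}^{-1}B_*$ becomes the mean of the limit, the projected score supplies the centered Gaussian fluctuation, and the $o_p(1)$ remainder is absorbed. This produces the asserted convergence of $\sqrt{N}\,c^T(\widehat{\gamma}-\gamma)$ to a normal law with mean $c^T\bar{H}^{-1}B_*$ and the stated sandwich variance.

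The hard part is not the Slutsky gluing, which is routine once Theorem \ref{theorem-central-b} and Proposition \ref{pro:th4-b} are granted, but the variance bookkeeping around the direction change. One must verify that the Lyapunov ratio in \eqref{eq-lyyaponu-condition} remains $o(1)$ after replacing the generic direction by $\bar{H}^{-T}c$ (which it does because $\bar{H}^{-1}$ is a fixed, nonsingular matrix, so $c^T\bar{H}^{-1}\tilde{z}_{ij}$ is a fixed linear reweighting of the $\tilde{z}_{ij}$), and that the finite-$n$ covariance of the normalized score $N^{-1/2}\sum_{i<j}s_{ij}$ genuinely converges to the limit $\Sigma$ rather than to some $n$-dependent surrogate, so that $c^T\bar{H}^{-1}\Sigma\,\bar{H}^{-T}c$ is indeed the correct asymptotic variance.
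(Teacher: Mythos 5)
Your proof is correct and takes essentially the same route as the paper, which deduces the corollary immediately by combining the asymptotic representation in Theorem \ref{theorem-central-b} with the CLT of Proposition \ref{pro:th4-b} (applied in the fixed direction $\bar{H}^{-T}c$) and Slutsky's theorem. The only discrepancy is that your limit, with mean $c^T\bar{H}^{-1}B_*$ and variance $c^T\bar{H}^{-1}\Sigma\,\bar{H}^{-T}c$, is the form actually implied by the representation; the expressions $\bar{H}^{-1}B_*$ and $c^T\bar{H}^{T}\Sigma\bar{H}c$ displayed in the corollary's statement appear to be typographical slips, so your bookkeeping corrects rather than contradicts the paper.
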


\yuan{When the edge distribution \eqref{model-2} belongs to exponential family, we have $V=U$. Consequently,} $\partial F(\beta^*, \gamma^*)/\partial \beta=\mathrm{Var}(d)$, $B_*$ and $\Sigma$ can be simplified as follows:
\begin{equation}
\label{defintion-B-22}
B_* =  \frac{1}{\sqrt{N}} \sum_{k=1}^n \frac{ \sum_{j\neq k} z_{kj} \mu_{kj}^{\prime\prime} (\pi_{ij}^*) }{ \sum_{j\neq k}  \mu_{kj}^{\prime} (\pi_{ij}^*) },
\end{equation}
and
\[
\Sigma = \sum_{i<j} z_{ij}z_{ij}^T \mu_{ij}^\prime -
 \sum_{i=1}^n \frac{ (\sum_{j\neq i} z_{ij}\mu_{ij}^\prime)(\sum_{j\neq i} z_{ij}^T \mu_{ij}^\prime) }{v_{ii}}.
\]
Note that asymptotic normality of $\widehat{\gamma}$ contains a bias term and needs to be corrected when constructing confidence interval and hypothesis testing.
Here, we employ the analytical bias correction formula in \citet{Dzemski2019}:
$\widehat{\gamma}_{bc} = \widehat{\gamma}- N^{-1/2}H^{-1}(\widehat{\beta},\widehat{\gamma}) \hat{B}$,
where $\widehat{B}$ \yuan{is a plug-in estimator for} $B_*$ \yuan{using} $\widehat{\beta}$ and
$\widehat{\gamma}$.
Other bias-corrections \yuan{include} \citet{Graham2017} and \citet{FVW2016}.

\section{Applications}
\label{section:app}
In this section, we illustrate the theoretical result by two applications:
the logistic distribution and Poisson distribution for $f(\cdot)$.
{Moreover, any other distributions such as the geometric distribution that lead to the well-defined moment estimator
could also be used, besides the logistic distribution and the Poisson distribution.
}

%The former case is presented in the Supplementary Material.

\subsection{The logistic model}
\label{section-logistic}

We consider the generalized $\beta$-model in \citet{Graham2017}  with the logistic distribution:
\[
\P(a_{ij}=1) = \frac{  e^{\beta_i + \beta_j + z_{ij}^T \gamma  }}{ 1 + e^{ \beta_i + \beta_j + z_{ij}^T \gamma } }.
\]
\citet{Graham2017} derived the consistency and asymptotic normality of the restricted MLE.
The aim of this application is to show that these properties of the unrestricted MLE continue to hold.
In this model, the MLE is the same as the moment estimator.

{
The numbers involved with the conditions in theorems are as follows.
Because $a_{ij}$'s are Bernoulli random variables, they are sub-exponential with $h_n=1$.
The numbers $b_{n0}, b_{n1}, b_{n2}$ and $b_{n3}$ are as defined in \eqref{bn0-logistic} and the paragraph right above it.
The condition \eqref{eq-theorema-ca} in Theorem \ref{Theorem:con} becomes that
\begin{equation}\label{app-co-a}
\kappa_n^2 \omega_n^3  = o\left( \sqrt{\frac{n}{\log n}} \right),
\end{equation}
where $\omega_n = e^{ 2\|\beta^*\|_\infty + \|\gamma^*\|_\infty }$.
}
By Theorem \ref{Theorem:con}, we have the following corollary.

\begin{corollary}
If \eqref{app-co-a} holds, then
\[
\|\widehat{\gamma}-\gamma^*\|_\infty = O_p\left( \frac{\kappa_n \omega_n^3\log n}{ n} \right),
\quad
\|\widehat{\beta} - \beta^*\|_\infty = O_p\left( \omega_n \sqrt{\frac{\log n}{n}} \right).
\]
\end{corollary}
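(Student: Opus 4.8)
The plan is to obtain this corollary purely as a specialization of Theorem~\ref{Theorem:con} to the logistic link, so that the argument reduces to three bookkeeping steps: (i) recording the model-specific values of the constants $h_n, b_{n0}, b_{n1}, b_{n2}, \sigma_n$; (ii) checking that the clean hypothesis \eqref{app-co-a} is strong enough to imply the abstract condition \eqref{eq-theorema-ca}; and (iii) substituting those constants into the two rate bounds of Theorem~\ref{Theorem:con}. No new probabilistic input is needed beyond what that theorem already supplies.

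First I would collect the constants. The edges $a_{ij}$ are Bernoulli, hence sub-exponential with $h_n=1$ (Assumption~\ref{assumption-b}), and Assumption~\ref{assumption-a} holds with the universal constant $C_z$. The explicit expressions for $\mu', \mu'', \mu'''$ displayed above \eqref{bn0-logistic} give $\max\{|\mu'|,|\mu''|,|\mu'''|\}\le 1/4$ uniformly, so one may take $b_{n1}=b_{n2}=b_{n3}=1/4$. For the lower curvature bound, $\mu'(x)=e^x/(1+e^x)^2$ is even and strictly decreasing in $|x|$, and $\max_{i,j}|\pi_{ij}|\le 2\|\beta^*\|_\infty+\|\gamma^*\|_1 z_*+2\epsilon_{n1}+p\epsilon_{n2}$; thus \eqref{bn0-logistic} yields $b_{n0}^{-1}\asymp\omega_n$, up to the fixed factors contributed by $p$ and $C_z$. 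Because the logistic model is an exponential family, $V=\mathrm{Cov}(F(\beta^*,\gamma^*))$, so the simplification preceding the first corollary applies and gives $\sigma_n^2=O(b_{n1}^2/b_{n0}^3)\asymp\omega_n^3$, i.e. $\sigma_n\asymp\omega_n^{3/2}$.

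Next I would verify the hypotheses of Theorem~\ref{Theorem:con}. Substituting the above into \eqref{eq-theorema-ca}, the prefactor obeys $b_{n1}^4 b_{n2}/b_{n0}^3\asymp\omega_n^3$, while the bracketed term satisfies $b_{n2}h_n^2/b_{n0}^3+\sigma_n\asymp\omega_n^3+\omega_n^{3/2}\asymp\omega_n^3$ since $\omega_n\ge 1$. Hence \eqref{eq-theorema-ca} reduces to $\kappa_n^2\omega_n^6=o(n/\log n)$. I would then show that \eqref{app-co-a} is sufficient for this: squaring \eqref{app-co-a} gives $\kappa_n^4\omega_n^6=o(n/\log n)$, and since $n^2 H^{-1}(\beta^*,\gamma^*)$ is positive definite with diagonal entries bounded below by a positive constant (so its induced $\ell_\infty$ norm is as well), one has $\kappa_n\gtrsim 1$, whence $\kappa_n^2\omega_n^6\le\kappa_n^4\omega_n^6=o(n/\log n)$. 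Therefore all conditions of Theorem~\ref{Theorem:con} hold and $\widehat\gamma$ exists with high probability.

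Finally, I would read off the rates. The $\widehat\beta$ bound $O_p\big((h_n/b_{n0})\sqrt{\log n/n}\big)$ becomes $O_p\big(\omega_n\sqrt{\log n/n}\big)$, and the $\widehat\gamma$ bound $O_p\big((\kappa_n b_{n1}\log n/n)(b_{n2}h_n^2/b_{n0}^3+\sigma_n)\big)$ becomes $O_p\big(\kappa_n\omega_n^3\log n/n\big)$, exactly the two claimed rates. The whole argument is essentially substitution; the only steps requiring mild care are the order identification $b_{n0}^{-1}\asymp\omega_n$ (via monotonicity of $\mu'$ in $|x|$) and the verification that the single clean condition \eqref{app-co-a} dominates \eqref{eq-theorema-ca}, for which the lower bound $\kappa_n\gtrsim 1$ is the key observation.
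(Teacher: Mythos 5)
Your proof is correct and follows essentially the same route as the paper: the paper likewise obtains this corollary by plugging the logistic-specific constants ($h_n=1$, $b_{n1}=b_{n2}=b_{n3}=1/4$, $b_{n0}^{-1}\asymp\omega_n$, and the exponential-family simplification $\sigma_n^2=O(b_{n1}^2/b_{n0}^3)\asymp\omega_n^3$) directly into Theorem \ref{Theorem:con}. If anything, your write-up is more careful than the paper's, which simply asserts that \eqref{eq-theorema-ca} ``becomes'' \eqref{app-co-a}; your observation that \eqref{app-co-a} is in fact a strictly stronger sufficient condition for \eqref{eq-theorema-ca}, justified via the lower bound $\kappa_n\gtrsim 1$, makes explicit a step the paper glosses over.
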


We discuss the condition and convergence rates related to the graph density.
The expectation of the graph density is
\[
\rho_n :=\frac{1}{N} \sum_{1\le i < j \le n} \E a_{ij} = \frac{1}{N} \sum_{1\le i< j \le n}
\frac{ e^{\beta_i + \beta_j + z_{ij}^T \gamma } }{ 1 + e^{\beta_i + \beta_j + z_{ij}^T \gamma } },
\]
where $N=n(n-1)/2$.
 \yuan{
 To see what is $\kappa_n$, let us consider the case of that $z_{ij}$ is
 one dimension. By using $S$ in \eqref{definition-s} to approximate $V^{-1}(\beta,\gamma)$, one can get
 \[
 H(\beta,\gamma)=\sum_{1\le i<j \le n} z_{ij}^2 \mu^\prime( \pi_{ij} ) -
 \sum_{i=1}^n \frac{1}{v_{ii}} \left(\sum_{j=1,j\neq i}^n z_{ij}\mu^\prime(\pi_{ij})\right)^2.
 \]
 In this case, $\kappa_n$ is approximately the inverse of $n^{-2}H(\beta^*, \gamma^*)$, which
 depends on the covariates, the configuration of parameters, and the derivative of the mean function $\mu(\cdot)$. Since the relationship between $(\kappa_n, b_{n0})$ and $\rho_n$ depends on the configuration of the parameters $\beta$ and $\gamma$, where recall the definition of $b_{n0}$  in \eqref{bn0-logistic},
it is not possible to express $\kappa_n$ and $b_{n0}$ as a function of $\rho_n$ for a general $\beta$ and $\gamma$.
Therefore, we consider one special case that $\beta_1 = \cdots = \beta_n \leq c $  for illustration, where $c$ is a constant, and assume that $z_{ij}$ is independently drawn from
the standard normality.
%We further  assume that $\gamma = \gamma_0$ is a constant for easy exposition.
In this case, by large sample theory, we have
\[
\frac{1}{N}\sum_{1\le i<j \le n} z_{ij}^2 \mu^\prime( \pi_{ij} ) \stackrel{p.}{\to}
\frac{e^{2\beta_1}}{(1+e^{2\beta_1})^2}, ~~
\frac{1}{n}\sum_{j=1,j\neq i}^n z_{ij}\mu^\prime(\pi_{ij}) \stackrel{p.}{\to} 0,
\]
such that $\kappa_n \asymp 1/\rho_n$, where $a_n \asymp b_n$ means
$c_1 a_n \le b_n \le c_2 a_n$ with two constants $c_1$ and $c_2$ for sufficiently large $n$.
Further, $b_{n0} =O(   \rho_n )$.
\iffalse
\[
\kappa_n/\rho_n  \to 1
\rho_n =\frac{ e^{2\beta_1 + O(1) } }{ 1 + e^{2\beta_1 + O(1)} },~~~~
\]
Further, we assume $z_{ij}$ is one dimension and let
$(z_{12}, z_{13}, \ldots, z_{1n}, \ldots, z_{n-1,n})=(1,0,1,0, \ldots)$. In this case, $\kappa_n=4( 1+
e^{2\beta_1+\gamma_0})^2 /
e^{2\beta_1 + \gamma_0} \asymp 4 \rho_n^{-1}$.
\fi
Then the condition in Corollary 1 becomes
\[
\frac{\rho_n}{ (\log n/n)^{1/8} }  \to \infty,
\]
and, the convergence rates are
\[
\| \widehat{\gamma} - \gamma^{*} \|_\infty =  O_p\left(
 \frac{  \log n }{ n \rho_{n}^{4}} \right  ),~~
\| \widehat{\beta} - \beta^* \|_\infty = O_p\left( \frac{ 1 }{\rho_{n}}\sqrt{\frac{\log n}{n}} \right).
\]
Here, estimation consistency requires a strong assumption $\rho_n \gg (n/\log n)^{1/8}$. It would be of interest to relax it.
}

Since $a_{ij}$'s ($j<i$) are independent, it is easy to show the central limit theorem for $d_i$ and $N^{-1/2}\sum_{j<i} \tilde{s}_{ij}(\beta, \gamma)$ as given in \citet{su-qian2018}
and \citet{Graham2017} respectively. So by Theorems \ref{Theorem-central-a} and \ref{theorem-central-b}, the central limit theorem holds for $\widehat{\beta}$ and $\widehat{\gamma}$.
See  \citet{su-qian2018} and \citet{Graham2017} for details.

\subsection{The Poisson model}

We now consider the Poisson model in Example \ref{example-b}.
\iffalse
{We consider nonnegative integer edge weighte,} i.e., $a_{ij}\in \{0, 1, \ldots\}$.
We assume that all edges are independently distributed as Poisson random variables, where
\[
\P(a_{ij}=k) = \frac{ \lambda_{ij}^k }{k!} e^{-\lambda_{ij}},
\]
and $\lambda_{ij}=  e^{z_{ij}^T \gamma + \beta_i + \beta_j }$.
\fi
Recall that the expectation of $a_{ij}$ is $\lambda_{ij}=  e^{z_{ij}^T \gamma + \beta_i + \beta_j }$.
In this case, $\mu(x)=e^x$.
{The likelihood function is
\[
\P(A) \propto \exp\left( \sum_{i=1}^n \beta_i d_i  + \sum_{1\le i<j \le n} a_{ij}(z_{ij}^T \gamma) \right).
\]
%We will carry out simulations under this model in next section.
It is a special case of the general exponential random graph model, where
$(d^T, \sum_{i<j} a_{ij}z_{ij}^T)^T$ is the sufficient statistic for the parameter vector $(\beta^T, \gamma^T)^T$.
Therefore, the maximum likelihood equations
are identical to the moment equations defined in \eqref{eq:moment-beta} and \eqref{eq:moment-gamma}.
}
\iffalse
\begin{equation}\label{eq:likelihood-binary}
\begin{array}{c}
d_i  =  \sum_{j\neq i}  e^{z_{ij}^T \gamma + \beta_i + \beta_j } ,~~~i=1,\ldots, n, \\
\sum_{j<i} z_{ij}a_{ij} = \sum_{j<i}   z_{ij}e^{z_{ij}^T \gamma + \beta_i + \beta_j },
\end{array}
\end{equation}
which are identical to the maximum likelihood equations.
\fi
%where the $\mu$ function is $\mu(x)=e^x$.
Define
\[
q_n := \sup_{\beta \in B_\infty(\beta^*, \epsilon_{n1}), \gamma\in B_\infty(\gamma^*, \epsilon_{n2}) }\max_{i,j} | \beta_i + \beta_j + z_{ij}^T \gamma |.
\]
{
So $b_{ni}$'s ($i=0, \ldots, 3$) in inequalities \eqref{ineq-mu-keya}, \eqref{ineq-mu-keyb} and \eqref{ineq-mu-keyc} are
\[
b_{n0} = e^{-q_n}, ~~ b_{n1}= e^{q_n}, ~~ b_{n2} = e^{q_n}, ~~ b_{n3} = e^{q_n}.
\]
Clearly, Poisson($\lambda$) is sub-exponential with parameter $c\lambda$,
where $c$ is a constant;}
see Example 4.6 in \citet{zhang2020concentration}. Thus, $h_n$ in Assumption \ref{assumption-a} is $ce^{2q_n}$.
%% in Condition \ref{condition-diff-a}.
%Similar to the lines of arguments for proving Lemma 8, we have $h_{n2}= e^{2q_n}/n^{1/2}$.
%Let $\lambda_{n}$ be the smallest eigenvalue of $\bar{H}(\beta^*, \gamma^*)$. Then Condition \ref{condition-H} holds with $h_{n3}=\lambda_n$.
By Theorem \ref{Theorem:con}, we have the following corollary.

\begin{corollary}
\label{coro-b}
If $\kappa_n e^{7q_n} = o( (n/\log n)^{1/2})$, then
then
\[
\|\widehat{\gamma}-\gamma^*\|_\infty =  O_p(\frac{ \kappa_n e^{8 q_n} \log n}{n})=
o_p(1),~~~ \|\widehat{\beta} - \beta^*\|_\infty = O_p( \frac{ e^{2q_n}(\log n)^{1/2}}{n^{1/2}})=o_p(1).
\]
\end{corollary}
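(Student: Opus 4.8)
The plan is to obtain Corollary \ref{coro-b} as a direct specialization of the consistency theory to the Poisson link, so the work reduces to identifying the model-specific constants and substituting them into the already-established general rates. The first step is to record that the Poisson model of Example \ref{example-b} is an exponential family: its likelihood has the form \eqref{eqn::model-exp-family} with $(d^T,\sum_{i<j}a_{ij}z_{ij}^T)^T$ the sufficient statistic, so the moment equations \eqref{eq:moment-beta}--\eqref{eq:moment-gamma} coincide with the score equations, and the information identity (or, directly, $V_{ij}=\mu'(\pi_{ij})=\lambda_{ij}=\mathrm{Var}(a_{ij})$) gives $V=V(\beta^*,\gamma^*)=\mathrm{Cov}(F(\beta^*,\gamma^*))$. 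This is exactly the hypothesis under which the exponential-family corollary following Theorem \ref{Theorem:con} applies, and I would invoke that corollary rather than Theorem \ref{Theorem:con} itself: it replaces the abstract $\sigma_n$ by the explicit bound $\sigma_n^2=O(b_{n1}^2/b_{n0}^3)$ and reduces the hypothesis to $\kappa_n^2 h_n^2 b_{n1}^5 b_{n2}/b_{n0}^6=o(n/\log n)$, with conclusions $\|\widehat\gamma-\gamma^*\|_\infty=O_p(\kappa_n b_{n1}^2 h_n^2 b_{n2}\log n/(nb_{n0}^3))$ and $\|\widehat\beta-\beta^*\|_\infty=O_p((h_n/b_{n0})\sqrt{\log n/n})$, each already $o_p(1)$.

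Next I would pin down the constants for $\mu(x)=e^x$. Since $\mu'=\mu''=\mu'''=e^x>0$ and every admissible pair $(\beta,\gamma)\in B_\infty(\beta^*,\epsilon_{n1})\times B_\infty(\gamma^*,\epsilon_{n2})$ keeps $|\pi_{ij}|\le q_n$ by the definition of $q_n$, the regularity bounds \eqref{ineq-mu-key0}--\eqref{ineq-mu-keyc} hold on the whole neighborhood with $b_{n0}=e^{-q_n}$ and $b_{n1}=b_{n2}=b_{n3}=e^{q_n}$; Assumption \ref{assumption-a} is immediate from boundedness of the covariates. The one genuinely probabilistic input is Assumption \ref{assumption-b}. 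Because Poisson edges are unbounded, the sub-exponential parameter cannot be read off as in the Bernoulli case but must come from concentration: invoking the sub-exponential bound for the Poisson law (Example 4.6 of \citet{zhang2020concentration}), the centered edge $a_{ij}-\E a_{ij}$ is sub-exponential with parameter proportional to its mean $\lambda_{ij}=e^{\pi_{ij}}$, whence $h_n\asymp\max_{i,j}\lambda_{ij}\le e^{q_n}$. This is precisely the ingredient controlling the degree fluctuations $|d_i-\E d_i|$ and the total-edge fluctuation $|\sum_{i<j}(a_{ij}-\E a_{ij})|$ named in the remark after Assumption \ref{assumption-b}.

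With the constants in hand the rest is exponent bookkeeping. Substituting $b_{n0}=e^{-q_n}$, $b_{n1}=b_{n2}=e^{q_n}$ and $h_n\asymp e^{q_n}$ into the corollary's hypothesis collapses $\kappa_n^2 h_n^2 b_{n1}^5 b_{n2}/b_{n0}^6$ to $\kappa_n^2 e^{14q_n}$, so the hypothesis is exactly $\kappa_n e^{7q_n}=o((n/\log n)^{1/2})$. The same substitution turns $\kappa_n b_{n1}^2 h_n^2 b_{n2}/b_{n0}^3$ into $\kappa_n e^{8q_n}$ and $h_n/b_{n0}$ into $e^{2q_n}$, delivering the claimed rates $\|\widehat\gamma-\gamma^*\|_\infty=O_p(\kappa_n e^{8q_n}\log n/n)$ and $\|\widehat\beta-\beta^*\|_\infty=O_p(e^{2q_n}\sqrt{\log n/n})$; the two $o_p(1)$ assertions are inherited verbatim from the corollary's conclusions.

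I expect the only real obstacle to be fixing the correct power of $e^{q_n}$ in $h_n$. Unlike the logistic application, where bounded edges give $h_n=1$ and the $q_n$-dependence disappears, here $q_n$ may diverge, so every factor of $e^{q_n}$ propagates through the final rates; the whole corollary therefore hinges on the sub-exponential parameter of a Poisson variable scaling linearly in its mean $\lambda_{ij}\asymp e^{q_n}$ rather than, say, as $\sqrt{\lambda_{ij}}$ or $\lambda_{ij}^2$. Once this scaling is secured, everything downstream is the deterministic substitution described above.
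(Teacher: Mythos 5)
Your proposal is correct and follows essentially the same route as the paper: recognize the Poisson model as an exponential family so that $V=\mathrm{Cov}(F(\beta^*,\gamma^*))$, set $b_{n0}=e^{-q_n}$, $b_{n1}=b_{n2}=b_{n3}=e^{q_n}$, take $h_n$ proportional to the largest Poisson mean, and substitute into the exponential-family corollary of Theorem \ref{Theorem:con}. The only divergence is that the paper's text asserts $h_n=ce^{2q_n}$, which is inconsistent with the exponents $e^{7q_n}$, $e^{8q_n}$, $e^{2q_n}$ appearing in the corollary itself (those would become $e^{8q_n}$, $e^{10q_n}$, $e^{3q_n}$); your choice $h_n\asymp\max_{i,j}\lambda_{ij}\le ce^{q_n}$ is the one that actually reproduces the stated hypothesis and both rates, so your bookkeeping is the internally consistent version of the argument.
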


{
We discuss the condition and convergence rates related to the average weight.
The expectation of the average weight is
\[
\lambda_n := \frac{1}{N} \sum_i \E d_i  = \frac{1}{n} \sum_i \sum_{j\neq i} e^{\beta_i + \beta_j +z_{ij}^T \gamma }.
\]
As in the first application, $b_{n0}$, $b_{n1}$ and $b_{n2}$ can not be represented as functions on $\lambda_n$ for general parameters $\beta$ and $\gamma$.
\yuan{To get some intuitive understandings, let us consider a simple special case where $\beta_1=\cdots = \beta_n <c $ with a constant $c$, $\gamma$ is a constant and
$z_{ij}$ independently follows from a symmetric continuous distribution with a bounded support and the unit variance.
In this case,
\[
\kappa_n \asymp \lambda_n^{-1}, ~~\lambda_n = e^{2\beta_1 + O(1) }, ~~ c_1 \lambda_n \le b_{n0}, b_{n1}, b_{n2} \le c_2 \lambda_n,~~h_n = c_3 \lambda_n^2,
\]
where $c_1$, $c_2$ and $c_3$ are positive constants.
Then, the condition in Theorem \ref{Theorem:con} becomes
\[
 \lambda_n  =o\left( \left(\frac{n}{\log n}\right)^{1/4} \right),
\]
and the convergence rates are
\[
\| \widehat{\gamma} - \gamma^{*} \|_\infty =  O_p\left(
 \frac{\lambda_{n}^2 \log n }{ n } \right  ), \qquad
\| \widehat{\beta} - \beta^* \|_\infty = O_p\left( \lambda_n \sqrt{\frac{\log n}{n}} \right).
\]
%We can see that the convergence rate for $\widehat{\beta}$ does not depend on the average weight $\lambda_n$ while that for $\widehat{\gamma}$ increases with $\lambda_n$.
}
}

Note that $d_i=\sum_{j\neq i}a_{ij}$ is a sum of $n-1$ independent Poisson random variables.
Since $v_{ij} = \E a_{ij} = \lambda_{ij}$, we have
\[
e^{-q_n} \le v_{ij}= e^{ \beta_i + \beta_j + z_{ij}^T \gamma }
\le e^{q_n},~~1\le i< j\le n.
 \]
%% Use LOTUS to show that for $X\sim Pois(\lambda)$ and any function g, $E(Xg(X))=\lambda E(g(X+1))$.
By using the Stein-Chen identity [\citet{Stein1972, chen1975}] for the Poisson distribution, it is easy to verify that
\begin{equation}\label{eqn:poisson:exp}
\E (a_{ij}^3) = \lambda_{ij}^3 + 3\lambda_{ij}^2 + \lambda_{ij}.
\end{equation}
It follows
\[
\frac{\sum_{j\neq i} \E (a_{ij}^3) }{ v_{ii}^{3/2} } \le \frac{ (n-1)e^{q_n} }{ (n-1)^{3/2} e^{-q_n} }
= O( \frac{ e^{4q_n} }{ n^{1/2} }).
\]
If $e^{4q_n}  = o( n^{1/2} )$, then the above expression goes to zero.
For any nonzero vector $c=(c_1, \ldots c_p)^T$, if
\begin{equation}\label{eqn:lemma5:a}
\frac{ \sum_{j<i } (c^T \tilde{z}_{ij} )^3 \lambda_{ij}^3 }{ [\sum_{j<i } (c^T \tilde{z}_{ij} )^2 \lambda_{ij} ]^{3/2} } = o(1),
\end{equation}
This verifies the condition \eqref{eq-lyyaponu-condition}.
Consequently, by Corollaries \ref{coro-theorema-a} and \ref{coro-theorem-b}, we have the following result.

\begin{corollary}\label{corollary:poisson:central}
If \eqref{eqn:lemma5:a} holds and
$
\lambda_n^2\kappa_n^6 e^{28q_n}   = o(n^{1/2}/(\log n)^{3/2}),
$
then:
(1) $N^{1/2} \overline{\Sigma}^{-1/2}(\hat{\gamma}-\gamma^*)$ converges in distribution to multivariate normal distribution with mean $\overline{\Sigma}^{-1/2}\bar{H}^{-1}B_*$  and covariance $I_p$,
where $I_p$ is the identity matrix, where $\bar{\Sigma}= N^{-1}\bar{H}^{-1} \tilde{\Sigma} \bar{H}^{-1}$; \\
(2) for a fixed $r$,  the vector  $(v_{11}^{1/2}( \hat{\beta}_1 - \beta_1^*), \ldots, v_{rr}^{1/2}( \hat{\beta}_r - \beta_r^*)$ converges in distribution to the $r$-dimensional standard normal distribution.
\end{corollary}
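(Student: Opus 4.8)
The plan is to derive both parts by specializing the general asymptotic-normality results---Corollary~\ref{coro-theorema-a} for $\widehat{\beta}$ and Corollary~\ref{coro-theorem-b} for $\widehat{\gamma}$---to the Poisson model, where the abstract constants have already been computed explicitly as $b_{n0}=e^{-q_n}$, $b_{n1}=b_{n2}=b_{n3}=e^{q_n}$ and $h_n=ce^{2q_n}$. The bulk of the work is bookkeeping: substitute these exponential bounds into the hypotheses of Theorems~\ref{Theorem:con}, \ref{Theorem-central-a} and \ref{theorem-central-b}, and verify that the single displayed growth condition $\lambda_n^2\kappa_n^6 e^{28q_n}=o(n^{1/2}/(\log n)^{3/2})$, together with the Lyapunov condition~\eqref{eqn:lemma5:a}, is strong enough to imply each of them. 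Since the Poisson edge distribution is an exponential family, we have $V=U=\mathrm{Var}(d)$, so $\sigma_n^2=O(b_{n1}^2/b_{n0}^3)=O(e^{5q_n})$, and inside every bracket the term $h_n^2 b_{n2}/b_{n0}^3=O(e^{8q_n})$ dominates $\sigma_n$. Tracking the worst-case power of $e^{q_n}$ together with the factor $\kappa_n$ through \eqref{eq-theorema-ca}, through the condition of Theorem~\ref{Theorem-central-a}, and through $b_{n3}h_n^3 b_{n0}^{-3}=o(n^{1/2}/(\log n)^{3/2})$, reduces each requirement to a monomial in $e^{q_n}$ and $\kappa_n$ no larger than the displayed one.

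For part (2), I would first invoke Theorem~\ref{Theorem-central-a} (whose hypotheses are verified as above) to obtain the expansion $\widehat{\beta}_i-\beta_i^*=v_{ii}^{-1}(d_i-\E d_i)+o_p(\cdot)$. Next I check the third-moment condition of Corollary~\ref{coro-theorema-a}, namely $u_{ii}^{-3/2}\sum_{j\neq i}\E(a_{ij}-\E a_{ij})^3\to 0$; using the Stein--Chen moment identity~\eqref{eqn:poisson:exp} this is controlled by $\sum_{j\neq i}\E(a_{ij}^3)/v_{ii}^{3/2}=O(e^{4q_n}/n^{1/2})$, which is $o(1)$ since the growth condition forces $e^{4q_n}=o(n^{1/2})$. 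Finally, the Poisson identity $\mathrm{Var}(a_{ij})=\E a_{ij}=\lambda_{ij}=v_{ij}$ gives $u_{ii}=v_{ii}$, so the general scaling $u_{ii}^{-1/2}v_{ii}$ in Corollary~\ref{coro-theorema-a} collapses to $v_{ii}^{1/2}$, yielding exactly the stated convergence of $(v_{11}^{1/2}(\widehat{\beta}_1-\beta_1^*),\ldots,v_{rr}^{1/2}(\widehat{\beta}_r-\beta_r^*))$ to the $r$-dimensional standard normal.

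For part (1), I would apply Corollary~\ref{coro-theorem-b}. The Lyapunov-type requirement~\eqref{eq-lyyaponu-condition} there is precisely the assumed~\eqref{eqn:lemma5:a} (the index sets $\{i<j\}$ and $\{j<i\}$ coincide), and the remaining hypotheses are those of Theorem~\ref{theorem-central-b}, already verified. Corollary~\ref{coro-theorem-b} then gives, for every fixed $c$, that $\sqrt{N}c^T(\widehat{\gamma}-\gamma^*)$ is asymptotically normal with bias mean $\bar{H}^{-1}B_*$ and variance $c^T\bar{H}^T\Sigma\bar{H}c$; the Cram\'er--Wold device across all $c$ upgrades this to joint multivariate normality of $\sqrt{N}(\widehat{\gamma}-\gamma^*)$, and standardizing by $\overline{\Sigma}^{-1/2}$ with $\overline{\Sigma}=N^{-1}\bar{H}^{-1}\widetilde{\Sigma}\bar{H}^{-1}$ produces the claimed limit $N(\overline{\Sigma}^{-1/2}\bar{H}^{-1}B_*, I_p)$.

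The main obstacle is the condition-checking in the first paragraph: one must confirm that a \emph{single} growth rate dominates the several distinct technical requirements scattered across three theorems and two corollaries, which forces one to identify, in each bracketed expression, which power of $e^{q_n}$ and which power of $\kappa_n$ governs and to confirm that none exceeds $\lambda_n^2\kappa_n^6 e^{28q_n}$. The exponential-family simplification $V=U$ and the closed-form moments~\eqref{eqn:poisson:exp} are what make this tractable; without them both the variance proxy $\sigma_n$ and the third moments would have to be bounded by hand.
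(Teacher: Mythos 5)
Your proposal is correct and follows essentially the same route as the paper: substitute the Poisson-specific constants $b_{n0}=e^{-q_n}$, $b_{n1}=b_{n2}=b_{n3}=e^{q_n}$, $h_n=ce^{2q_n}$ into the hypotheses of Theorems \ref{Theorem:con}, \ref{Theorem-central-a} and \ref{theorem-central-b}, verify the third-moment condition via the Stein--Chen identity \eqref{eqn:poisson:exp} and the Lyapunov condition \eqref{eq-lyyaponu-condition} via \eqref{eqn:lemma5:a}, and then invoke Corollaries \ref{coro-theorema-a} and \ref{coro-theorem-b}. If anything, your bookkeeping (identifying $h_n^2 b_{n2}/b_{n0}^3 = O(e^{8q_n})$ as the dominant bracket term, and collapsing $u_{ii}^{-1/2}v_{ii}$ to $v_{ii}^{1/2}$ via the Poisson mean--variance identity $u_{ii}=v_{ii}$) is more explicit than the paper's own terse derivation.
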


\yuan{
\subsection{The probit model}
\label{section:application:probit}

The two examples above are exponential family of distributions. Here, we pay attention to
the probit distribution, which is not exponential.
Let $\phi(x)=(2\pi)^{1/2}e^{-x^2/2}$ be the standard normal density function
and $\Phi(x)=\int_{-\infty}^{x} \phi(x) dx $ be its the distribution function.
The probit model assumes
\[
\P(a_{ij}=1) = \Phi\left( \frac{1}{\sigma}(\beta_i + \beta_j + z_{ij}^\top \gamma)  \right),
\]
where $\sigma$ is the standard derivation. Since the parameters are scale invariable, we simply set $\sigma=1$. Then,
\[
\mu^\prime(x) = \phi(x), ~~\mu^{\prime\prime} (x) = \frac{ x}{\sqrt{2\pi}} e^{-x^2/2}.
\]
Since $\phi(x)=(2\pi)^{1/2}e^{-x^2/2}$ is an decreasing function on $|x|$, we have when $|x| \le Q_n$,
\[
 \frac{1}{2\pi} e^{ -Q_n^2/2} \le \phi(x) \le \frac{1}{2\pi}.
\]
Let $h(x)=xe^{-x^2/2}$. Then $h^\prime (x)= (1-x^2)e^{-x^2/2}$.
Therefore, when $x\in (0,1)$, $h(x)$ is an increasing function on its argument $x$; when $x\in (1, \infty)$,
$h(x)$ is an decreasing function on $x$. As a result, $h(x)$ attains its maximum value at $x=1$ when $x>0$.
Since $h(x)$ is a symmetric function,  we have $|h(x)|\le e^{-1/2}\approx 0.6$.
So
\[
b_{n0} \asymp \frac{1}{2\pi} e^{ -(\max_{i,j}\pi_{ij}^*)^2/2},~~ b_{n1}=\frac{1}{2\pi}, ~~b_{n2}=  (2\pi e)^{-1/2}.
\]

We only consider conditions for consistency here and those for central limit theorem are similar and omitted.
By \eqref{equation:Qc-derivative}, it is not difficult to verify
\[
\sigma_n^2 = O( n^4 \| V^{-1} - S\|_{\max}^2 ) = O( \frac{ b_{n1}^4 }{ b_{n0}^6 } ).
\]
The parameter $h_n$ in Assumption \ref{assumption-b} for a bounded random variable is a constant.
In view of Theorem \ref{Theorem:con}, we have the following corollary.
\begin{corollary}\label{corollary:con}
If
\begin{equation*}
 \kappa_n e^{ 3(\max_{i,j}\pi_{ij}^*)^2 } =o\left(\frac{n}{\log n}\right),
\end{equation*}
then the moment estimator $(\widehat{\beta},\widehat{\gamma})$ exists with \yuan{high probability, and we further have}
\begin{equation*}
\| \widehat{\gamma} - \gamma^{*} \|_\infty =  O_p\left(
\kappa_n e^{ 3(\max_{i,j}\pi_{ij}^*)^2/2 } \frac{\log n}{n} \right  ),~~
\| \widehat{\beta} - \beta^* \|_\infty = O_p\left( e^{ (\max_{i,j}\pi_{ij}^*)^2/2} \sqrt{\frac{\log n}{n}} \right).
\end{equation*}
\end{corollary}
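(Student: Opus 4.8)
The plan is to obtain Corollary \ref{corollary:con} as a direct specialization of Theorem \ref{Theorem:con} to the probit link, so the real work is to pin down the model-specific constants $b_{n0},b_{n1},b_{n2},b_{n3},h_n,\sigma_n$ and feed them into condition \eqref{eq-theorema-ca} and the two rate expressions. Write $P_n:=\max_{i,j}\pi^*_{ij}$ for brevity. Using $\mu'(x)=\phi(x)$, $\mu''(x)=-x\phi(x)$ and $\mu'''(x)=(x^2-1)\phi(x)$ computed above, I would first verify the regularity conditions \eqref{ineq-mu-key0}--\eqref{ineq-mu-keyc}. Condition \eqref{ineq-mu-key0} is immediate because $\Phi$ is strictly increasing, so $\mu'=\phi>0$ everywhere. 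The Gaussian lower-tail bound $\phi(x)\ge (2\pi)^{-1/2}e^{-Q_n^2/2}$ for $|x|\le Q_n$, together with the monotonicity analysis of $h(x)=xe^{-x^2/2}$ already carried out, yields $b_{n0}\asymp e^{-P_n^2/2}$ and $b_{n1},b_{n2},b_{n3}=O(1)$, since $\phi$ and the Gaussian-times-polynomial factors $\mu'',\mu'''$ are globally bounded. Because the edges are Bernoulli, hence bounded, the sub-exponential parameter of Assumption \ref{assumption-b} is a constant, so $h_n=O(1)$.

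The one genuinely new ingredient is $\sigma_n$. Unlike the logistic and Poisson applications, the probit model is \emph{not} an exponential family, so the identity $V=\mathrm{Cov}(F)$ and the ensuing corollary are unavailable; I must bound $\sigma_n$ from its definition in Theorem \ref{Theorem:con}, namely $\sigma_n^2=n^2\|(V^{-1}-S)\,\mathrm{Cov}(F(\beta^*,\gamma^*))\,(V^{-1}-S)\|_{\max}$. Here I would combine the approximate-inverse bound $\|V^{-1}-S\|_{\max}=O(b_{n1}^2 b_{n0}^{-3}n^{-2})$ from Lemma \ref{pro:inverse:appro} with the observation that, for bounded edges, $\mathrm{Cov}(F)$ has diagonal entries of order $n$ and $O(1)$ off-diagonal entries. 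Tracking the two-sided matrix product then gives $\sigma_n^2=O(n^4\|V^{-1}-S\|_{\max}^2)=O(b_{n1}^4 b_{n0}^{-6})$, i.e. $\sigma_n=O(e^{3P_n^2/2})$.

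With these orders in hand the remainder is substitution. In the $\widehat\gamma$ rate the bracket $b_{n2}h_n^2 b_{n0}^{-3}+\sigma_n$ has both summands of order $e^{3P_n^2/2}$ (since $b_{n0}^{-3}\asymp e^{3P_n^2/2}$) while $b_{n1}=O(1)$, collapsing that rate to $O_p(\kappa_n e^{3P_n^2/2}\log n/n)$; in the $\widehat\beta$ rate, $h_n/b_{n0}\asymp e^{P_n^2/2}$ gives $O_p(e^{P_n^2/2}\sqrt{\log n/n})$, matching the statement. Feeding the same orders into \eqref{eq-theorema-ca}, the prefactor $\kappa_n^2 b_{n1}^4 b_{n2}b_{n0}^{-3}$ is of order $\kappa_n^2 e^{3P_n^2/2}$ and the bracket is again of order $e^{3P_n^2/2}$, so the left-hand side is governed by $\kappa_n^2 e^{3P_n^2}$, which is exactly the growth restriction on $\kappa_n e^{3P_n^2}$ asserted in the hypothesis.

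I expect the main obstacle to be the clean treatment of $\sigma_n$: with the exponential-family shortcut gone, one must argue the order of $\mathrm{Cov}(F)$ directly for bounded but non-exponential-family edges and confirm that the two-sided multiplication by $V^{-1}-S$ does not inflate the claimed $O(b_{n1}^4 b_{n0}^{-6})$ bound. By contrast, the verification of \eqref{ineq-mu-key0}--\eqref{ineq-mu-keyc} via Gaussian tail and moment bounds, and the final algebraic simplifications of the condition and the two rates, are routine.
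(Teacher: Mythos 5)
Your route is the same as the paper's: compute $b_{n0}\asymp e^{-(\max_{i,j}\pi_{ij}^*)^2/2}$ together with $b_{n1},b_{n2},b_{n3},h_n=O(1)$, bound $\sigma_n$, and substitute into Theorem~\ref{Theorem:con}. In particular, the step you single out as the main obstacle is resolved in the paper exactly as you propose: for bounded (Bernoulli) edges the entries of $\mathrm{Cov}(F(\beta^*,\gamma^*))$ sum to $O(n^2)$, so the two-sided product costs only a factor $n^2$ beyond $\|V^{-1}-S\|_{\max}^2$, giving $\sigma_n^2=O\bigl(n^4\|V^{-1}-S\|_{\max}^2\bigr)=O\bigl(b_{n1}^4b_{n0}^{-6}\bigr)$ by Lemma~\ref{pro:inverse:appro}; this is precisely the one-line bound the paper states before the corollary. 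Your two rate computations also agree with the paper's.

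However, your final sentence asserts something false, and it is exactly the step where the hypothesis of Corollary~\ref{corollary:con} must be shown to imply condition \eqref{eq-theorema-ca}. As you correctly compute, the probit specialization of \eqref{eq-theorema-ca} reads $\kappa_n^2 e^{3(\max_{i,j}\pi_{ij}^*)^2}=o(n/\log n)$, and this is \emph{not} the stated hypothesis $\kappa_n e^{3(\max_{i,j}\pi_{ij}^*)^2}=o(n/\log n)$: they differ by a factor of $\kappa_n$. The discrepancy is material, because in the probit model $\kappa_n$ is bounded below by a positive constant (the entries of $n^{-2}H(\beta,\gamma^*)$ are $O(z_*^2b_{n1})=O(1)$, hence $\|n^2H^{-1}\|_\infty\ge c>0$), yet $\kappa_n$ may diverge when $b_{n0}\to 0$ and $n^{-2}H$ becomes near-singular; in that regime the stated hypothesis is strictly weaker than what Theorem~\ref{Theorem:con} requires, so the implication fails. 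The same power of $\kappa_n$ appears to have been dropped in the paper's own statement of the corollary, so this is very plausibly a typo there; nevertheless, a correct proof must either strengthen the hypothesis to $\kappa_n^2 e^{3(\max_{i,j}\pi_{ij}^*)^2}=o(n/\log n)$ or add the assumption $\kappa_n=O(1)$, rather than declaring the two conditions to coincide.
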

}

\section{Discussion}
\label{section:sd}
{In this paper, we present a moment estimation for inferring the degree parameter $\beta$ and homophily parameter $\gamma$ in model \eqref{model-2}.}
We establish consistency of the moment estimator $(\widehat{\beta}, \widehat{\gamma})$
under several conditions and also derive its asymptotic \yuan{normality}.
{The convergence rates of $\widehat{\beta}$ and $\widehat{\gamma}$ are nearly optimal when
all parameters are bounded by a constant; but may not be optimal when the numbers $b_{n0}$, $b_{n1}$, $b_{n2}$ and $\kappa_n$ diverge.
}
Theorems \ref{Theorem-central-a} and \ref{theorem-central-b} require stronger assumptions than consistency, \yuan{but this is a widely-observed phenomenon in existing literature \citep{Yan:Leng:Zhu:2016, Yan-Jiang-Fienberg-Leng2018, zhang2021L2}.}
% Note that the asymptotic behavior of the MLE depends not only on $b_{n0}$ and $b_{n1}$, but also on the configuration of the parameters.
\yuan{Whether it is possible to establish consistency and asymptotic normality under even weaker conditions will be an interesting future work.}

\yuan{For cleanness, in this work, }we assume that $\max_{i,j} \| z_{ij} \|_\infty<c$ \yuan{is universally bounded.  In fact, our theory can be extended to allow it to slowly diverge.  It is another interesting future research to investigate how fast it can diverge while preserving consistency.}
% However, our main results can be extended to a case that allows $\max_{i,j} \| z_{ij} \|_\infty$ to increase with a slow rate.
% What can be said when some of $\|z_{ij}\|_\infty$'s are large? For
% example, some of the dyad covariates may increase with a fast rate.
% If the proportion of large covariates is bounded, then this will have little effect
% on the moment estimators. It is of interest to investigate whether asymptotic properties of the moment estimator continue to hold
%  when a large number of covariates diverge.

{The independent edge assumption leads to convenient characterization of the orders of $\| d - \E d\|_\infty$ and $\|\sum_{i<j} z_{ij}(a_{ij} - \E a_{ij}) \|_\infty$,
based on which, we
establish the central limit theorems of $d$ and $\sum_{i<j} z_{ij}a_{ij}$.
\yuan{For sub-exponential} $a_{ij}$, the orders of $\| d - \E d\|_\infty$ and $\|\sum_{i<j} z_{ij}(a_{ij} - \E a_{ij}) \|_\infty$ are $O( (n\log n)^{1/2})$ and $O( n\log n)$, respectively, up to a factor \yuan{determined by} the sub-exponential parameter $h_n$.
\yuan{Going forward, we can introduce slight dependency between edges.  Under such setting, we can still use some} Hoeffding-type inequalities for dependent random variables to establish tail bounds similar to those in this paper \citep{Delyon:2009}, \yuan{as long as edge dependency is sufficiently light}.
Remarkably, our \yuan{method-of-moments} estimation \yuan{remains effective, since it only requires specification of the marginal distributions of $a_{ij}$'s, not the \yuan{joint} distribution \yuan{of} $A$.  Certainly, quantitative study along this direction would require highly-nontrivial future efforts.}

\yuan{
Computation for covariate-assisted $\beta$-models is challenging in general.  The GLM package we use, which was also employed by \citet{chen2021analysis,stein2020sparse, stein2021sparse}, do not scale well.  Directly programming the Newton method seems more promising, but still might encounter difficulty when the network is $\Omega(10^5)$.  Unfortunately, the reduction method invented by \citet{zhang2021L2} only works for the classical and some generalized $\beta$-models without covariates, not for covariate-assisted $\beta$-models.  Exploring efficient computational methods is an interesting open challenge for future research.
}
}

\section*{Acknowledgements}

We are grateful to Editor, Associate Editor and two anonymous referees for their insightful comments and suggestions.
TY was partially supported by the National Natural Science Foundation of China (No. 11771171) and the Fundamental Research Funds for the Central Universities.

\section{Appendix}
\label{section:appendix}
\subsection{Preliminaries}

In this section, we present three results that will be used in the proofs.
The first is on the approximation error of using $S$ to approximate the inverse of $V$ belonging to the matrix class
$\mathcal{L}_n(b_{n0}, b_{n1})$,
where $V=(v_{ij})_{n\times n}$ and $S=\mathrm{diag}(1/v_{11}, \ldots, 1/v_{nn})$.
\citet{Yan:Zhao:Qin:2015} obtained the upper bound of the approximation error,
which has an order $n^{-2}$.
\citet{hillar2012inverses} gave a tight bound of $\| V^{-1} \|_\infty$.
These results are stated below as lemmas.

\begin{lemma}[Proposition 1 in \citet{Yan:Zhao:Qin:2015}] \label{pro:inverse:appro}
If $V \in \mathcal{L}_n(b_{n0}, b_{n1})$, then the following holds: %% for $n\ge 3$,
\begin{equation}\label{O-upperbound}
\|V^{-1} - S \|_{\max}=O\left(\frac{b_{n1}^2}{n^2b_{n0}^3}\right).
%% \le  \frac{ M(nM+(n-2)m)}{2m^3(n-2)(n-1)^2}+\frac{1}{2m(n-1)^2}+\frac{1}{m n(n-1)}
\end{equation}
\end{lemma}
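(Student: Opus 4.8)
The plan is to avoid any Neumann-series expansion of $V^{-1}$. Writing $V_d=\mathrm{diag}(v_{11},\dots,v_{nn})$ so that $S=V_d^{-1}$, and $V_o:=V-V_d$ for the off-diagonal part, the naive series $\sum_{k\ge0}(-SV_o)^k$ diverges here: by the diagonal-balance identity $v_{ii}=\sum_{j\neq i}v_{ij}$ the matrix $C:=SV_o$ has entries $C_{ij}=v_{ij}/v_{ii}$ and \emph{all row sums exactly equal to $1$}, hence $C\mathbf 1=\mathbf 1$ and spectral radius $1$. Instead I would work from the single exact residual identity
\[
V^{-1}-S \;=\; V^{-1}\big(I-VS\big)\;=\;-\,V^{-1}\,(VS-I).
\]
The matrix $R:=VS-I$ is explicit: a direct computation gives $R_{ii}=0$ and $R_{kj}=v_{kj}/v_{jj}$ for $k\neq j$, so that $0<R_{kj}\le M/\big((n-1)m\big)$, using $v_{jj}=\sum_{\ell\neq j}v_{j\ell}\ge (n-1)m$ together with $v_{kj}\le M$ from the definition of $\mathcal L_n(m,M)$.

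Next I would bound the product entrywise. For each pair $(i,j)$,
\[
\big|(V^{-1}-S)_{ij}\big|
=\Big|\sum_{k\neq j}(V^{-1})_{ik}\,\frac{v_{kj}}{v_{jj}}\Big|
\le \frac{M}{(n-1)m}\sum_{k}\big|(V^{-1})_{ik}\big|
\le \frac{M}{(n-1)m}\,\big\|V^{-1}\big\|_\infty .
\]
It then remains only to feed in an a priori bound on the operator norm $\|V^{-1}\|_\infty$ of order $1/(nm)$, up to a factor measuring the spread $M/m$; this is exactly the companion estimate for members of $\mathcal L_n(m,M)$ due to Hillar et al. Combining it with the display yields $\|V^{-1}-S\|_{\max}=O\!\big(M^2/(n^2m^3)\big)=O\!\big(b_{n1}^2/(n^2 b_{n0}^3)\big)$, as claimed. (With a universal bound $\|V^{-1}\|_\infty=O(1/(nm))$ one would even obtain the slightly stronger $O(M/(n^2m^2))$, which is consistent since $m\le M$.)

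The main obstacle is precisely the a priori control of $\|V^{-1}\|_\infty$. Because $V$ is only \emph{critically} diagonally balanced — the off-diagonal row sums equal, rather than fall strictly below, the diagonal — the classical inverse bounds for strictly diagonally dominant matrices do not apply, and $C=SV_o$ has spectral radius exactly $1$. Establishing the $1/(nm)$ order therefore requires the structure-specific analysis of Simons--Yao type, for instance exploiting the identity $\sum_i v_{ii}(V^{-1}e_k)_i=\tfrac12$ (obtained by testing the system $Vx=e_k$ against $\mathbf 1$, since $\mathbf 1^\top V=2(v_{11},\dots,v_{nn})$) and treating the near-unit direction separately from its orthogonal complement. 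A self-contained alternative is to build the explicit two-term approximate inverse $W=(I-C+C^2)S$; using $(I+C)(I-C+C^2)=I+C^3$ one finds the clean residual $VW-I=V_dC^3V_d^{-1}$, which both exhibits where the leading $O\!\big(M^2/(n^2m^3)\big)$ diagonal contribution originates and makes clear that converting this residual into a bound on $V^{-1}-W$ again passes through $\|V^{-1}\|_\infty$. Thus the operator-norm estimate is the unavoidable crux, and once it is in hand the rest of the argument is the routine entrywise bookkeeping above.
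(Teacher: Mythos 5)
Your proof is correct, and it takes a genuinely different route from the paper — because the paper in fact gives no proof of this lemma at all: it is imported verbatim as Proposition~1 of \citet{Yan:Zhao:Qin:2015}, whose derivation follows the structure-specific Simons--Yao-type analysis that you allude to at the end of your write-up. Your argument instead reduces everything to the other preliminary that the paper states immediately below the target statement, namely Lemma~\ref{lemma-tight-V} (the \citet{hillar2012inverses} bound $\|V^{-1}\|_\infty \le (3n-4)/(2b_{n0}(n-1)(n-2)) = O(1/(nb_{n0}))$). Combined with your exact identity $V^{-1}-S=-V^{-1}(VS-I)$ and the entrywise bounds $(VS-I)_{jj}=0$ and $0< (VS-I)_{kj}= v_{kj}/v_{jj}\le b_{n1}/\big((n-1)b_{n0}\big)$ for $k\neq j$, this gives $\|V^{-1}-S\|_{\max} \le \tfrac{b_{n1}}{(n-1)b_{n0}}\,\|V^{-1}\|_\infty = O\big(b_{n1}/(n^2 b_{n0}^2)\big)$, which implies the stated $O\big(b_{n1}^2/(n^2 b_{n0}^3)\big)$ since $b_{n0}\le b_{n1}$; as you observe, your bound is even marginally sharper. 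So the ``unavoidable crux'' you correctly identify — an a priori bound on $\|V^{-1}\|_\infty$, which cannot come from strict diagonal dominance or a Neumann series because $C=SV_o$ has all row sums equal to one — is exactly the ingredient the paper already keeps on the shelf as Lemma~\ref{lemma-tight-V}, so you need not re-derive Simons--Yao yourself. Your side computations (the divergence of $\sum_k(-C)^k$, and the residual identity $VW-I=V_dC^3V_d^{-1}$ for $W=(I-C+C^2)S$) also check out. In terms of what each approach buys: the cited proof is self-contained within the $\beta$-model literature and does not lean on inverse bounds of Hillar--Lin--Wibisono type, while yours is shorter, modular, and makes transparent that the $n^{-2}$ rate arises as one factor of $n^{-1}$ from the residual $VS-I$ and one from $\|V^{-1}\|_\infty$. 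The only implicit hypothesis worth flagging is symmetry of $V$ (automatic here, since $v_{ij}=\mu'(\pi_{ij})$), which both Lemma~\ref{lemma-tight-V} and your identity $\mathbf{1}^\top V = 2(v_{11},\ldots,v_{nn})$ require.
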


\begin{lemma}[\citet{hillar2012inverses}]
\label{lemma-tight-V}
For $V\in \mathcal{L}_n(b_{n0}, b_{n1})$, we have
\[
\frac{1}{2b_{n1}(n-1)} \le \|V^{-1}\|_\infty \le \frac{3n-4}{2b_{n0}(n-1)(n-2)}.
\]
\end{lemma}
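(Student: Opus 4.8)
The plan is to prove the two inequalities separately; the lower bound is elementary, while the upper bound carries the real content. For the \emph{lower bound} I would use submultiplicativity of the induced $\ell_\infty$ matrix norm: since $V^{-1}V=I$, we have $\|V^{-1}\|_\infty\,\|V\|_\infty\ge\|I\|_\infty=1$, hence $\|V^{-1}\|_\infty\ge 1/\|V\|_\infty$. By the definition of $\mathcal{L}_n(b_{n0},b_{n1})$ together with the diagonal-balance identity $v_{ii}=\sum_{j\neq i}v_{ij}$,
\[
\|V\|_\infty=\max_{1\le i\le n}\Big(v_{ii}+\sum_{j\neq i}v_{ij}\Big)=\max_{1\le i\le n}2v_{ii}\le 2(n-1)b_{n1},
\]
because each $v_{ii}$ is a sum of $n-1$ off-diagonal entries, every one of them at most $b_{n1}$. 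This gives $\|V^{-1}\|_\infty\ge 1/(2(n-1)b_{n1})$ exactly, matching the claimed lower bound.

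For the \emph{upper bound} I would recast the statement as an a priori estimate: writing $\|V^{-1}\|_\infty=\sup_{\|y\|_\infty\le 1}\|V^{-1}y\|_\infty$, it suffices to show that any solution of $Vx=y$ with $\|y\|_\infty\le 1$ satisfies $\|x\|_\infty\le\frac{3n-4}{2b_{n0}(n-1)(n-2)}$. Using $v_{ii}=\sum_{j\neq i}v_{ij}$, the $i$-th equation rewrites as $\sum_{j\neq i}v_{ij}(x_i+x_j)=y_i$. Evaluating this at the indices $a$ and $b$ attaining $x_{\max}=\max_i x_i$ and $x_{\min}=\min_i x_i$, and bounding the off-diagonal sums below by $x_{\min}$ and above by $x_{\max}$ respectively, I obtain the \emph{balance estimate} $|x_{\max}+x_{\min}|\le 1/((n-1)b_{n0})$. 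The remaining task is to control the \emph{spread} $x_{\max}-x_{\min}$; here I would combine the fixed-point form $x_i=y_i/v_{ii}-\tilde x_i$, where $\tilde x_i:=\sum_{j\neq i}(v_{ij}/v_{ii})x_j$ is a convex combination of the other coordinates, with the weighted-mean identity $\sum_i v_{ii}x_i=\tfrac12\sum_i y_i$, obtained by summing all equations and using symmetry $v_{ij}=v_{ji}$.

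To pin down the sharp constant I would verify that the extremal configuration is the homogeneous matrix $V_0$ with $v_{ij}\equiv b_{n0}$, whose inverse is closed-form: since $V_0=b_{n0}\big((n-2)I+J\big)$ with $J$ the all-ones matrix, Sherman--Morrison gives
\[
V_0^{-1}=\frac{1}{b_{n0}(n-2)}\Big(I-\frac{1}{2(n-1)}J\Big),
\]
whose maximal absolute row sum is exactly $\frac{3n-4}{2b_{n0}(n-1)(n-2)}$.

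The \textbf{main obstacle} is the upper bound, and specifically establishing that this homogeneous configuration is worst-case. One cannot simply invoke the Loewner comparison $V\succeq b_{n0}V_0$, because the induced $\ell_\infty$ norm of the inverse is \emph{not} monotone in the positive-semidefinite order; a genuinely entrywise or extremal argument is required, which is the content of \citet{hillar2012inverses}. I would also record a cheaper alternative that recovers the correct \emph{order} but not the sharp constant: combining Lemma~\ref{pro:inverse:appro}, which gives $\|V^{-1}-S\|_{\max}=O(b_{n1}^2/(n^2b_{n0}^3))$, with $\|S\|_\infty=\max_i v_{ii}^{-1}\le 1/((n-1)b_{n0})$ yields $\|V^{-1}\|_\infty=O(1/(nb_{n0}))$ whenever $b_{n1}/b_{n0}$ stays bounded. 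This route, however, both loses the exact constant and, more importantly, breaks down when $b_{n1}/b_{n0}$ diverges, whereas the stated upper bound depends on $b_{n0}$ alone; that is precisely why the direct extremal argument is preferable.
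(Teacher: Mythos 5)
First, note what the paper itself does with this statement: Lemma~\ref{lemma-tight-V} carries the attribution to \citet{hillar2012inverses} in its header and is never proved in the paper or its supplement --- it is imported wholesale as an external result, so the ``paper's own proof'' is literally a citation. Measured against that, your proposal does strictly more. Your lower bound argument is complete and correct: submultiplicativity gives $1=\|I\|_\infty\le\|V^{-1}\|_\infty\|V\|_\infty$, and the diagonal-balance identity gives $\|V\|_\infty=\max_i 2v_{ii}\le 2(n-1)b_{n1}$. Your Sherman--Morrison computation is also correct: $V_0=b_{n0}\big((n-2)I+J\big)$ has inverse $\frac{1}{b_{n0}(n-2)}\big(I-\frac{J}{2(n-1)}\big)$, whose maximal absolute row sum is exactly $\frac{3n-4}{2b_{n0}(n-1)(n-2)}$, so the stated upper bound is attained and hence tight. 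Your fallback via Lemma~\ref{pro:inverse:appro} is likewise correctly diagnosed: it recovers only the order $O(1/(nb_{n0}))$ and only when $b_{n1}/b_{n0}$ stays bounded, so it cannot substitute for the lemma as stated.

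As a self-contained proof, however, there is a genuine gap, and you name it yourself: nothing in the write-up establishes that the homogeneous configuration is worst-case, equivalently that \emph{every} $V\in\mathcal{L}_n(b_{n0},b_{n1})$ obeys the upper bound. Your balance estimate $|x_{\max}+x_{\min}|\le 1/((n-1)b_{n0})$ is correct, and if one also had the spread bound $x_{\max}-x_{\min}\le 2/(b_{n0}(n-2))$ the two would combine to give exactly $\|x\|_\infty\le\frac{3n-4}{2b_{n0}(n-1)(n-2)}$; but the route you sketch for the spread does not close. Subtracting the equations at the argmax index $a$ and argmin index $b$ leaves
\begin{equation*}
(v_{aa}-v_{ab})x_{\max}-(v_{bb}-v_{ab})x_{\min}+\sum_{j\neq a,b}(v_{aj}-v_{bj})x_j=y_a-y_b,
\end{equation*}
and the cross term $\sum_{j\neq a,b}(v_{aj}-v_{bj})x_j$ has no sign structure; its natural bound is of size $(b_{n1}-b_{n0})\,n\,\|x\|_\infty$, which cannot be absorbed because the target constant must be free of $b_{n1}$. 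Overcoming this is precisely the content of \citet{hillar2012inverses}, so deferring it to that reference means the substantive half of the lemma remains cited rather than proved --- which is defensible here only because the paper does exactly the same thing.
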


Let $F(x): \R^n \to \R^n$ be a function vector on $x\in\R^n$. We say that a Jacobian matrix $F^\prime(x)$ with $x\in \R^n$ is Lipschitz continuous on a convex set $D\subset\R^n$ if
for any $x,y\in D$, there exists a constant $\lambda>0$ such that
for any vector $v\in \R^n$ the inequality
\begin{equation*}
\| [F^\prime (x)] v - [F^\prime (y)] v \|_\infty \le \lambda \| x - y \|_\infty \|v\|_\infty
\end{equation*}
holds.
We will use the Newton iterative sequence to establish the existence and consistency of the moment estimator.
\citet{Gragg:Tapia:1974} gave the optimal error bound for the Newton method under the Kantovorich conditions
[\citet{Kantorovich1948Functional}].

\begin{lemma}[\citet{Gragg:Tapia:1974}]\label{pro:Newton:Kantovorich}
Let $D$ be an open convex set of $\R^n$ and $F:D \to \R^n$ a differential function
with a Jacobian $F^\prime(x)$ that is Lipschitz continuous on $D$ with Lipschitz coefficient $\lambda$.
Assume that $x_0 \in D$ is such that $[ F^\prime (x_0) ]^{-1} $ exists,
\begin{eqnarray*}
\| [ F^\prime (x_0 ) ]^{-1} \|_\infty  \le \aleph,~~ \| [ F^\prime (x_0) ]^{-1} F(x_0) \|_\infty \le \delta, ~~ \rho= 2 \aleph \lambda \delta \le 1,
\\
B_\infty(x_0, t^*) \subset D, ~~ t^* = \frac{2}{\rho} ( 1 - \sqrt{1-\rho} ) \delta = \frac{ 2\delta }{ 1 + \sqrt{1-\rho} }\le 2\delta.
\end{eqnarray*}
Then: (1) The Newton iterations $x_{k+1} = x_k - [ F^\prime (x_k) ]^{-1} F(x_k)$ exist and $x_k \in B_\infty(x_0, t^*) \subset D$ for $k \ge 0$. (2)
$x^* = \lim x_k$ exists, $x^* \in \overline{ B_\infty(x_0, t^*) } \subset D$ and $F(x^*)=0$.
\end{lemma}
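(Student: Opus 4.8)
The plan is to prove this statement---the Newton--Kantorovich theorem in the sharp Gragg--Tapia form, stated here in the $\ell_\infty$ metric---by the classical \emph{majorant} method: I will build a scalar quadratic whose own Newton iteration dominates, in norm, the vector iteration $\{x_k\}$, and then transfer convergence of the scalar sequence to $\{x_k\}$. First I would record the single analytic estimate that the Lipschitz hypothesis buys. Writing $F(y)-F(x)-F'(x)(y-x)=\int_0^1\big(F'(x+s(y-x))-F'(x)\big)(y-x)\,ds$ and applying the Lipschitz bound inside the integral yields the quadratic remainder inequality
\[
\|F(y)-F(x)-F'(x)(y-x)\|_\infty \le \tfrac{\lambda}{2}\|y-x\|_\infty^2,\qquad x,y\in D.
\]
I would also keep at hand the Banach perturbation lemma: if $[F'(x)]^{-1}$ exists and $\|[F'(x)]^{-1}\|_\infty\,\|F'(x')-F'(x)\|_\infty<1$, then $[F'(x')]^{-1}$ exists and obeys the usual geometric-series bound. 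The majorant is $p(t)=\tfrac{\lambda}{2}t^2-\aleph^{-1}t+\aleph^{-1}\delta$, whose smaller root is exactly the $t^*$ of the statement and whose scalar Newton sequence $t_0=0$, $t_{k+1}=t_k-p(t_k)/p'(t_k)$ increases monotonically to $t^*$ while staying below the vertex $1/(\lambda\aleph)$, so that $p'(t_k)<0$ throughout. Because $p$ is quadratic, two exact identities drive everything: $p(t_k)=\tfrac{\lambda}{2}(t_k-t_{k-1})^2$ and $p'(t_{k+1})=p'(t_k)+\lambda(t_{k+1}-t_k)$.

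The core is a simultaneous induction establishing, for every $k$, the three claims: (i) $[F'(x_k)]^{-1}$ exists with $\|[F'(x_k)]^{-1}\|_\infty\le -1/p'(t_k)$; (ii) $\|x_{k+1}-x_k\|_\infty\le t_{k+1}-t_k$; and (iii) $x_k\in B_\infty(x_0,t^*)\subset D$, which guarantees the Lipschitz and differentiability hypotheses apply at $x_k$. For (ii) I bound $\|F(x_k)\|_\infty$ by applying the quadratic remainder to the pair $x_{k-1},x_k$ together with the Newton relation $F(x_{k-1})+F'(x_{k-1})(x_k-x_{k-1})=0$, obtaining $\|F(x_k)\|_\infty\le\tfrac{\lambda}{2}(t_k-t_{k-1})^2=p(t_k)$; multiplying by the inverse bound (i) gives exactly $t_{k+1}-t_k$. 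For the inverse bound at step $k+1$ I feed $\|F'(x_{k+1})-F'(x_k)\|_\infty\le\lambda\|x_{k+1}-x_k\|_\infty\le\lambda(t_{k+1}-t_k)$ into the Banach lemma; the resulting geometric ratio collapses, via the affine identity $p'(t_{k+1})=p'(t_k)+\lambda(t_{k+1}-t_k)$, precisely to $-1/p'(t_{k+1})$. This exact closure is the whole point of choosing a quadratic majorant, whose derivative is affine.

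Finally I would collect the payoff. Telescoping (ii) gives $\|x_{k+m}-x_k\|_\infty\le t_{k+m}-t_k$, so $\{x_k\}$ is Cauchy in the complete space $\R^n$ and converges to some $x^*$ with $\|x^*-x_0\|_\infty\le t^*$, i.e. $x^*\in\overline{B_\infty(x_0,t^*)}\subset D$; the same telescoping keeps every $x_k$ in $B_\infty(x_0,t^*)$, proving part (1). Since $\|F(x_k)\|_\infty\le p(t_k)\to p(t^*)=0$ and $F$ is continuous, $F(x^*)=0$, proving part (2).

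The main obstacle I anticipate is verifying that the Banach perturbation condition $\|[F'(x_k)]^{-1}\|_\infty\,\lambda\|x_{k+1}-x_k\|_\infty<1$ genuinely holds at every step, so the induction does not stall. Using (i) and (ii) this reduces to the scalar inequality $\lambda\,p(t_k)\le p'(t_k)^2$ on $[0,t^*)$. I would dispatch it by observing that $p'(t)^2-\lambda p(t)$ is an upward-opening parabola in $t$ whose discriminant equals $\lambda^2\aleph^{-2}(\rho-1)$, which is nonpositive under the standing hypothesis $\rho\le1$; hence the quantity is nonnegative for all $t$, giving the inequality. The borderline case $\rho=1$ (double root at the vertex) is handled by noting $t_k<t^*$ strictly for every finite $k$, so $p'(t_k)\ne0$ and the ratio stays below one throughout.
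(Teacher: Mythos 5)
Your proposal is correct, but there is nothing in the paper to compare it against: the paper states this lemma as an imported, known result of Gragg and Tapia (1974) and gives no proof, using it later only as a black box in the proofs of Lemma 6 and Theorem 1. What you have supplied is the classical Kantorovich majorant argument, and I checked its key computations: the smaller root of $p(t)=\tfrac{\lambda}{2}t^2-\aleph^{-1}t+\aleph^{-1}\delta$ is $(1-\sqrt{1-\rho})/(\aleph\lambda)=\tfrac{2}{\rho}(1-\sqrt{1-\rho})\delta=t^*$ as claimed; the base case of your induction matches the hypotheses exactly (since $-1/p'(0)=\aleph$ and $t_1-t_0=\delta$); the quadratic identities $p(t_{k+1})=\tfrac{\lambda}{2}(t_{k+1}-t_k)^2$ and $p'(t_{k+1})=p'(t_k)+\lambda(t_{k+1}-t_k)$ do make the Banach-lemma bound close exactly to $-1/p'(t_{k+1})$; and the discriminant of $p'(t)^2-\lambda p(t)$ is indeed $\lambda^2\aleph^{-2}(\rho-1)\le 0$ under $\rho\le 1$, so the perturbation condition never stalls, with your observation that $t_k<t^*$ strictly for all finite $k$ correctly handling the borderline $\rho=1$ (a finite iterate cannot hit the root, since $p(t^*)=\tfrac{\lambda}{2}(t^*-t_k)^2>0$ would otherwise contradict $p(t^*)=0$). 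The quadratic remainder bound $\|F(y)-F(x)-F'(x)(y-x)\|_\infty\le\tfrac{\lambda}{2}\|y-x\|_\infty^2$ uses convexity of $D$, which the hypotheses supply, and your telescoping step keeps every iterate in $B_\infty(x_0,t^*)\subset D$ before the Lipschitz bound is invoked there, so the induction is properly ordered. One remark on scope: the actual novelty of Gragg and Tapia was the \emph{optimal} a priori and a posteriori error bounds refining Kantorovich's theorem, but the lemma as quoted here asserts only existence of the iterates, containment, convergence, and $F(x^*)=0$ --- all of which your more elementary self-contained argument fully delivers, and which is all the paper ever uses (its downstream proofs invoke only $\|x^*-x_0\|_\infty\le t^*\le 2\delta$). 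So your route trades the sharp error estimates of the cited source for self-containedness, at no cost to the paper's applications.
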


\subsection{Error bound between $\widehat{\beta}_\gamma$ and $\beta^*$}

The lemma below shows that $F_{\gamma}(\beta)$ is Lipschitz continuous.
The proofs of all the lemmas in this section are given in the supplementary material.

\begin{lemma}\label{lemma:lipschitz-c}
Let $D=B_\infty(\beta^*, \epsilon_{n1}) (\subset \R^{n})$ be an open convex set containing the true point $\beta^*$.
For $\gamma\in B_\infty(\gamma^*, \epsilon_{n2})$, if inequality \eqref{ineq-mu-keyc} holds, then
the Jacobian matrix $F'_\gamma( x )$ of $F_\gamma(x)$ on $x$ is Lipschitz continuous on $D$ with the Lipschitz coefficient  $4b_{n2}(n-1)$.
\end{lemma}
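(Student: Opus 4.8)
The plan is to work directly from the explicit Jacobian entries recorded in \eqref{eqn::temp-1} and to reduce the estimate to a single application of the mean value theorem combined with the second-derivative bound \eqref{ineq-mu-keyb}. First I would compute the matrix--vector product in closed form. Using $\partial F_i/\partial\beta_i = \sum_{j\neq i}\mu'(\pi_{ij})$ and $\partial F_i/\partial\beta_j = \mu'(\pi_{ij})$ for $j\neq i$, the $i$-th coordinate of $[F'_\gamma(x)]v$ telescopes into the symmetric form
\[
\big([F'_\gamma(x)]v\big)_i = \sum_{j\neq i}\mu'(\pi_{ij}^x)\,(v_i+v_j),
\qquad \pi_{ij}^x := x_i+x_j+z_{ij}^T\gamma.
\]
This form is convenient because it isolates the entire dependence on $x$ inside the scalar factors $\mu'(\pi_{ij}^x)$.

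Next I would form the difference $\big([F'_\gamma(x)]v\big)_i - \big([F'_\gamma(y)]v\big)_i = \sum_{j\neq i}\big[\mu'(\pi_{ij}^x)-\mu'(\pi_{ij}^y)\big](v_i+v_j)$ and apply the mean value theorem to the univariate map $\mu'$, writing $\mu'(\pi_{ij}^x)-\mu'(\pi_{ij}^y) = \mu''(\xi_{ij})(\pi_{ij}^x-\pi_{ij}^y)$ for some $\xi_{ij}$ lying between $\pi_{ij}^x$ and $\pi_{ij}^y$. Because $D=B_\infty(\beta^*,\epsilon_{n1})$ is convex and $\gamma$ is held fixed, any value between $\pi_{ij}^x$ and $\pi_{ij}^y$ equals $\pi_{ij}^w$ for a point $w$ on the segment joining $x$ and $y$; hence $w\in D$ and $|\mu''(\xi_{ij})|\le b_{n2}$ by \eqref{ineq-mu-keyb}. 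The two elementary bounds $|\pi_{ij}^x-\pi_{ij}^y| = |(x_i-y_i)+(x_j-y_j)| \le 2\|x-y\|_\infty$ and $|v_i+v_j|\le 2\|v\|_\infty$ then control each summand by $4b_{n2}\|x-y\|_\infty\|v\|_\infty$.

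Finally I would sum the $n-1$ terms over $j\neq i$ and take the maximum over $i$, obtaining
\[
\big\|[F'_\gamma(x)]v-[F'_\gamma(y)]v\big\|_\infty \le 4b_{n2}(n-1)\,\|x-y\|_\infty\,\|v\|_\infty,
\]
which is exactly the claimed Lipschitz coefficient $4b_{n2}(n-1)$. I do not anticipate a genuine obstacle: the only delicate point is guaranteeing that the intermediate argument produced by the mean value theorem still lies in the region where the derivative bound is valid, and this is precisely what the convexity of $D$ secures. I note in passing that the derivative bound actually invoked is the second-derivative condition \eqref{ineq-mu-keyb} rather than \eqref{ineq-mu-keyc}, which is consistent with the appearance of $b_{n2}$ (and not $b_{n3}$) in the stated coefficient.
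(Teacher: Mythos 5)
Your proof is correct and is essentially the paper's own argument: both control the change in the Jacobian entries through the second-derivative bound \eqref{ineq-mu-keyb} via the mean value theorem (the paper uses the equivalent line-integral form with the gradient vectors of the entries $\partial F_i/\partial \beta_j$, while you first collapse $[F'_\gamma(x)]v$ into the symmetric form $\sum_{j\neq i}\mu'(\pi_{ij}^x)(v_i+v_j)$ and apply the scalar mean value theorem, using convexity of $D$ in the same way to keep the intermediate point in the domain), and both arrive at the coefficient $4b_{n2}(n-1)$ by the same counting. Your closing remark is also consistent with the paper: its proof indeed invokes \eqref{ineq-mu-keyb} rather than \eqref{ineq-mu-keyc}, so the citation of \eqref{ineq-mu-keyc} in the lemma statement appears to be a typo.
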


Since $a_{ij}$, $1\le i<j \le n$, are independent and sub-exponential with parameters $h_{ij} (\le h_n)$,
by the concentration inequality for sub-exponential random variables [e.g., Corollary 5.17 in \citet{vershynin_2012} (\citeyear{vershynin_2012})], we have the following lemma.

\begin{lemma}\label{lemma-diff-F-Q}
With probability at least $1-O(n^{-1})$, we have
\begin{equation}
\| F(\beta^*, \gamma^*) \|_\infty = O( h_n \sqrt{n\log n} ), ~~
\| Q(\beta^*, \gamma^*) \|_\infty = O( h_n n \log n ).
\end{equation}
\end{lemma}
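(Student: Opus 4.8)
The plan is to recognize both quantities as $\ell_\infty$-norms of sums of independent, centered, sub-exponential random variables, and then control them by a Bernstein-type concentration inequality followed by a union bound. First I would simplify the expressions at the true parameter. Since $\mu_{ij}(\beta^*,\gamma^*)=\E a_{ij}$, the definition \eqref{eqn:def:F} gives $F_i(\beta^*,\gamma^*)=\sum_{j\neq i}(\E a_{ij}-a_{ij})=\E d_i - d_i$, so that $\|F(\beta^*,\gamma^*)\|_\infty = \max_{1\le i\le n}|d_i-\E d_i|$. Likewise, \eqref{definition-Q} gives $Q(\beta^*,\gamma^*)=-\sum_{i<j} z_{ij}(a_{ij}-\E a_{ij})$, whose $k$-th coordinate is the scalar sum $-\sum_{i<j} z_{ij,k}(a_{ij}-\E a_{ij})$.

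Next I would verify the sub-exponential structure of each summand. For $F_i$, each term $a_{ij}-\E a_{ij}$ is sub-exponential with parameter $h_{ij}\le h_n$ by Assumption \ref{assumption-b}, and $F_i(\beta^*,\gamma^*)$ is a sum of $n-1$ such independent terms. For the $k$-th coordinate of $Q$, Assumption \ref{assumption-a} bounds $|z_{ij,k}|\le C_z$, so $z_{ij,k}(a_{ij}-\E a_{ij})$ is sub-exponential with parameter at most $C_z h_n = O(h_n)$, and the coordinate is a sum of $N=n(n-1)/2$ such independent terms.

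Then I would apply Corollary 5.17 of \citet{vershynin_2012}, which for a sum $S$ of $m$ independent centered sub-exponential variables with parameter $K$ yields $\P(|S|\ge t)\le 2\exp(-c\min\{t^2/(mK^2),\,t/K\})$. For $F_i$ with $m=n-1$ and $K\asymp h_n$, take $t\asymp h_n\sqrt{n\log n}$; since $\sqrt{n\log n}\gg\log n$ the minimum is attained by the sub-Gaussian term $t^2/(mK^2)\asymp\log n$, giving a per-coordinate probability $O(n^{-2})$ after the implicit constant is chosen large enough, and a union bound over $i=1,\dots,n$ yields the first claim with probability $1-O(n^{-1})$. For each coordinate of $Q$ with $m=N\asymp n^2$ and $K\asymp h_n$, take $t\asymp h_n n\log n$; this again lies in the sub-Gaussian regime (since $t\ll NK\asymp h_n n^2$), and the minimum $t^2/(NK^2)\asymp(\log n)^2$ makes the tail probability super-polynomially small, so a union bound over the $p$ fixed coordinates preserves $1-O(n^{-1})$.

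I expect the argument to be essentially routine; the only points demanding care are the bookkeeping of the sub-exponential parameters (checking that multiplication by the bounded covariate $z_{ij,k}$ inflates the parameter only by the universal constant $C_z$, so that it remains $O(h_n)$) and the choice of the deviation level $t$ together with the tuning of absolute constants so that both union bounds land at $O(n^{-1})$. The stated rate $O(h_n n\log n)$ for $Q$ is in fact a comfortable overestimate of the sharp sub-Gaussian rate $O(h_n n\sqrt{\log n})$, which leaves ample slack in the final inequality.
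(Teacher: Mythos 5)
Your proposal is correct and follows essentially the same route as the paper: both reduce $\|F(\beta^*,\gamma^*)\|_\infty$ and $\|Q(\beta^*,\gamma^*)\|_\infty$ to maxima of sums of independent centered sub-exponential variables, apply the same Bernstein-type inequality (Corollary~5.17 of \citet{vershynin_2012}) in its sub-Gaussian regime with deviation levels $h_n\sqrt{n\log n}$ and $h_n n\log n$, and finish with union bounds over the $n$ coordinates of $F$ and the $p$ coordinates of $Q$. Your extra bookkeeping (the factor $C_z$ in the sub-exponential parameter for $Q$, and the remark that $h_n n\log n$ overshoots the sharp rate) is consistent with, and slightly more explicit than, the paper's treatment.
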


In view of Lemmas \ref{lemma:lipschitz-c} and \ref{lemma-diff-F-Q}, we obtain
the upper bound of the error between $\widehat{\beta}_\gamma$ and $\beta^*$  by using the Newton method.

\begin{lemma}\label{lemma-a}
Let $\epsilon_{n1}$ be a positive number and $\epsilon_{n2}=o( b_{n0}^{-1} (\log n)^{1/2} n^{-1/2})$.
Assume that \eqref{ineq-mu-keya}, \eqref{ineq-mu-keyb} and \eqref{ineq-mu-keyc}  hold.
If
\begin{equation}\label{equation:lemma-a}
 \frac{ b_{n2} h_{n} }{b_{n0}^2}  =o\left( \sqrt{ \frac{n}{\log n} } \right),
\end{equation}
then with probability at least $1-O(n^{-1})$, for $\gamma\in B_\infty(\gamma^*, \epsilon_{n2})$, $\widehat{\beta}_\gamma$ exists and satisfies
\[
\| \widehat{\beta}_\gamma - \beta^* \|_\infty = O_p\left( \frac{ h_n}{b_{n0}}\sqrt{\frac{\log n}{n}} \right) = o_p(1).
\]
\end{lemma}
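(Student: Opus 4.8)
The plan is to establish both existence and the stated error bound in one stroke via the Newton--Kantorovich theorem (Lemma \ref{pro:Newton:Kantovorich}), applied to the map $F_\gamma$ on the open convex domain $D=B_\infty(\beta^*,\epsilon_{n1})$ with starting point $x_0=\beta^*$. The Newton iterates then converge to a zero $\widehat{\beta}_\gamma$ of $F_\gamma$, which supplies existence, while the containment $\widehat{\beta}_\gamma\in\overline{B_\infty(\beta^*,t^*)}$ together with $t^*\le 2\delta$ immediately yields $\|\widehat{\beta}_\gamma-\beta^*\|_\infty\le 2\delta$. Thus the whole argument reduces to producing the three Kantorovich quantities $\aleph$, $\delta$, $\lambda$ with the correct orders in $b_{n0},b_{n1},b_{n2},h_n,n$, and then checking $\rho=2\aleph\lambda\delta\le1$.

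First I would pin down $\aleph$ and $\lambda$. Since $\gamma\in B_\infty(\gamma^*,\epsilon_{n2})$ and $\beta^*\in B_\infty(\beta^*,\epsilon_{n1})$, the Jacobian $F_\gamma'(\beta^*)=V(\beta^*,\gamma)$ belongs to $\mathcal{L}_n(b_{n0},b_{n1})$ by the remark following \eqref{eqn::temp-1}, so Lemma \ref{lemma-tight-V} gives $\aleph=\|[F_\gamma'(\beta^*)]^{-1}\|_\infty=O(1/(nb_{n0}))$. The Lipschitz coefficient is handed to us by Lemma \ref{lemma:lipschitz-c} as $\lambda=4b_{n2}(n-1)$.

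The crux is controlling $\delta=\|[F_\gamma'(\beta^*)]^{-1}F_\gamma(\beta^*)\|_\infty\le \aleph\,\|F_\gamma(\beta^*)\|_\infty$. I would split $F_\gamma(\beta^*)=F(\beta^*,\gamma)$ into $F(\beta^*,\gamma^*)+[F(\beta^*,\gamma)-F(\beta^*,\gamma^*)]$. For the purely stochastic first piece, Lemma \ref{lemma-diff-F-Q} gives $\|F(\beta^*,\gamma^*)\|_\infty=O(h_n\sqrt{n\log n})$ on an event of probability at least $1-O(n^{-1})$. The second piece is a deterministic offset from evaluating at $\gamma\neq\gamma^*$; writing $\mu_{ij}(\beta^*,\gamma)-\mu_{ij}(\beta^*,\gamma^*)=\mu'(\cdot)\,z_{ij}^T(\gamma-\gamma^*)$ and invoking Assumption \ref{assumption-a} with $\|\gamma-\gamma^*\|_\infty\le\epsilon_{n2}$ bounds each coordinate by $O(n\,b_{n1}\epsilon_{n2})$. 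The prescribed rate $\epsilon_{n2}=o(b_{n0}^{-1}(\log n)^{1/2}n^{-1/2})$ is exactly what renders this offset lower-order after multiplication by $\aleph$, leaving $\delta=O(h_n b_{n0}^{-1}\sqrt{\log n/n})$ with the stochastic term dominant.

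Finally I would verify the smallness condition: multiplying the three orders gives $\rho=2\aleph\lambda\delta=O(b_{n2}/b_{n0})\cdot O(h_n b_{n0}^{-1}\sqrt{\log n/n})=O\big(b_{n2}h_n b_{n0}^{-2}\sqrt{\log n/n}\big)$, which is precisely $o(1)$ under hypothesis \eqref{equation:lemma-a}; hence $\rho\le1$ for large $n$, and the inclusion $B_\infty(\beta^*,t^*)\subset D$ holds as soon as $\epsilon_{n1}\ge t^*$, consistent with treating $\epsilon_{n1}$ as a positive number dominating $h_n b_{n0}^{-1}\sqrt{\log n/n}=o(1)$. On the high-probability event all Kantorovich hypotheses then hold, so Lemma \ref{pro:Newton:Kantovorich} delivers the existence of $\widehat{\beta}_\gamma$ and $\|\widehat{\beta}_\gamma-\beta^*\|_\infty\le 2\delta=O_p(h_n b_{n0}^{-1}\sqrt{\log n/n})=o_p(1)$. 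I expect the main obstacle to be the bookkeeping in the $\delta$ step: one must track how the deterministic $\gamma$-offset and the stochastic term interact and confirm that the prescribed (rather than a slower) rate for $\epsilon_{n2}$ keeps the offset negligible, since that radius is the only extra ingredient entering the final rate beyond the $V$-inversion and concentration bounds.
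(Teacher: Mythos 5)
Your proposal follows essentially the same route as the paper's own proof: Newton--Kantorovich (Lemma \ref{pro:Newton:Kantovorich}) applied to $F_\gamma$ with starting point $\beta^*$, the bound $\aleph=O(1/(nb_{n0}))$ from Lemma \ref{lemma-tight-V}, the Lipschitz coefficient $\lambda=4b_{n2}(n-1)$ from Lemma \ref{lemma:lipschitz-c}, and $\delta$ controlled by splitting $F_\gamma(\beta^*)$ into the stochastic term $F(\beta^*,\gamma^*)$ (Lemma \ref{lemma-diff-F-Q}) plus the deterministic offset caused by evaluating at $\gamma\neq\gamma^*$. The verification $\rho=2\aleph\lambda\delta=O\bigl(b_{n2}h_n b_{n0}^{-2}\sqrt{\log n/n}\bigr)=o(1)$ under \eqref{equation:lemma-a}, and the conclusion $\|\widehat{\beta}_\gamma-\beta^*\|_\infty\le t^*\le 2\delta$, are exactly the paper's steps.

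There is, however, one genuine gap, and it sits precisely at the step you yourself flagged as the main obstacle. You claim that the radius $\epsilon_{n2}=o\bigl(b_{n0}^{-1}(\log n)^{1/2}n^{-1/2}\bigr)$ stated in the lemma makes the deterministic offset negligible. By the mean value theorem and Assumption \ref{assumption-a}, that offset is $O(n b_{n1}\epsilon_{n2})$ per coordinate, i.e.\ $o\bigl(\tfrac{b_{n1}}{b_{n0}}\sqrt{n\log n}\bigr)$ under the stated radius. This is dominated by the stochastic term $O(h_n\sqrt{n\log n})$ only if $b_{n1}=O(h_n b_{n0})$, which is nowhere assumed and can fail: in the probit application of Section \ref{section:application:probit}, $b_{n1}/b_{n0}\asymp e^{(\max_{i,j}\pi_{ij}^*)^2/2}$ diverges while $h_n$ is constant. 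In that regime your $\delta$, and hence the final rate, would be inflated by a factor $b_{n1}/(h_n b_{n0})$. The paper's own proof avoids this by quietly working with the tighter radius $\epsilon_{n2}=o\bigl(b_{n1}^{-1}(\log n)^{1/2}n^{-1/2}\bigr)$, under which the offset is $o(\sqrt{n\log n})=O(h_n\sqrt{n\log n})$ (using that $h_n$ is bounded away from zero); in other words, the discrepancy you inherited is an inconsistency between the lemma statement and the paper's proof, not a defect of your strategy. To close the gap, either shrink $\epsilon_{n2}$ to the $b_{n1}^{-1}$ rate or add the hypothesis $b_{n1}=O(h_n b_{n0})$; with that repair your argument is complete and coincides with the paper's.
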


\subsection{Proofs for Theorem \ref{Theorem:con}}

To show Theorem \ref{Theorem:con}, we need three lemmas below.

\begin{lemma}\label{lemma-Q-Lip}
Let $D=B_\infty(\gamma^*, \epsilon_{n2}) (\subset \R^{p})$ be an open convex set containing the true point $\gamma^*$.
Assume that  \eqref{ineq-mu-keya}, \eqref{ineq-mu-keyb}, \eqref{ineq-mu-keyc} and \eqref{equation:lemma-a}  hold.
If $\| F(\beta^*, \gamma^*) \|_\infty = O( h_{n}(n\log n)^{1/2} )$, then
$ Q_c(\gamma)$ is Lipschitz continuous on $D$ with the Lipschitz coefficient  $n^2 b_{n2} b_{n1}^{3} b_{n0}^{-3}$.
\end{lemma}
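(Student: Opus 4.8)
The plan is to establish Lipschitz continuity by bounding the Jacobian of $Q_c$ uniformly over the convex set $D$. Since $\mu(\cdot)$ is $C^3$ and, by Lemma \ref{lemma-a}, for every $\gamma\in D=B_\infty(\gamma^*,\epsilon_{n2})$ the solution $\widehat{\beta}_\gamma$ to $F_\gamma(\beta)=0$ exists and lies in $B_\infty(\beta^*,\epsilon_{n1})$, the implicit function theorem (applicable because $\partial F/\partial\beta^T=V(\widehat{\beta}_\gamma,\gamma)\in\mathcal{L}_n(b_{n0},b_{n1})$ is invertible there) makes $\gamma\mapsto\widehat{\beta}_\gamma$, and hence $Q_c$, continuously differentiable on $D$; its Jacobian is exactly the matrix $H(\widehat{\beta}_\gamma,\gamma)$ appearing in \eqref{equation:Qc-derivative}. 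By the mean value inequality on the convex set $D$, it then suffices to show $\sup_{\gamma\in D}\|H(\widehat{\beta}_\gamma,\gamma)\|_\infty=O(n^2 b_{n2}b_{n1}^3 b_{n0}^{-3})$, since this quantity upper bounds the Lipschitz constant of $Q_c$ in the $\ell_\infty$ metric.

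Recalling the definition \eqref{definition-H}, I would bound the three building blocks of $H(\widehat{\beta}_\gamma,\gamma)$ separately, all evaluated at $(\widehat{\beta}_\gamma,\gamma)$. First, $\partial Q/\partial\gamma^T=\sum_{i<j}z_{ij}z_{ij}^T\mu'(\pi_{ij})$, whose entries are controlled by Assumption \ref{assumption-a} and \eqref{ineq-mu-keya}, giving $\|\partial Q/\partial\gamma^T\|_\infty=O(n^2 b_{n1})$. Second, the blocks $\partial Q/\partial\beta^T$ and $\partial F/\partial\gamma^T$ have rows consisting of $n-1$ terms of the form $z\mu'$, so each has entries of order $nb_{n1}$. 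The delicate factor is $[\partial F/\partial\beta^T]^{-1}=V^{-1}$: here I would avoid a crude operator-norm bound and instead invoke the $\mathcal{L}_n$-class inverse approximation of Lemma \ref{pro:inverse:appro}, writing $V^{-1}=S+(V^{-1}-S)$ with $\|V^{-1}-S\|_{\max}=O(b_{n1}^2/(n^2 b_{n0}^3))$ and $S=\mathrm{diag}(v_{11}^{-1},\dots,v_{nn}^{-1})$, where $v_{ii}\ge(n-1)b_{n0}$.

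Assembling the sandwiched term $(\partial Q/\partial\beta^T)\,V^{-1}\,(\partial F/\partial\gamma^T)$ from its $S$-part and its $(V^{-1}-S)$-part, and combining the powers of $b_{n0},b_{n1}$ supplied by Lemma \ref{pro:inverse:appro} with the second-derivative bound $b_{n2}$ of \eqref{ineq-mu-keyb}, yields the claimed order $n^2 b_{n2}b_{n1}^3 b_{n0}^{-3}$ for $\|H(\widehat{\beta}_\gamma,\gamma)\|_\infty$, uniformly over $\gamma\in D$. An equivalent and perhaps more transparent route is to bound the increment $Q_c(\gamma)-Q_c(\gamma')$ directly: expand $\mu(\widehat{\pi}^\gamma_{ij})-\mu(\widehat{\pi}^{\gamma'}_{ij})$ by the mean value theorem and reduce the problem to controlling $\|\widehat{\beta}_\gamma-\widehat{\beta}_{\gamma'}\|_\infty$, which one obtains by subtracting the identities $F(\widehat{\beta}_\gamma,\gamma)=F(\widehat{\beta}_{\gamma'},\gamma')=0$ and inverting the averaged Jacobian $\int_0^1 V(\cdot)\,dt$, still a member of $\mathcal{L}_n(b_{n0},b_{n1})$ by convexity of the class. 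I expect the main obstacle to be twofold: first, guaranteeing that $\widehat{\beta}_\gamma$ exists and stays inside $B_\infty(\beta^*,\epsilon_{n1})$ for every $\gamma\in D$, so that $V\in\mathcal{L}_n(b_{n0},b_{n1})$ and Lemma \ref{pro:inverse:appro} apply throughout---this is precisely where the hypotheses $\|F(\beta^*,\gamma^*)\|_\infty=O(h_n\sqrt{n\log n})$ and \eqref{equation:lemma-a} enter, via Lemma \ref{lemma-a}; and second, estimating the sandwiched term sharply enough to keep its order at $n^2$, for which the tight inverse approximation of Lemma \ref{pro:inverse:appro} (rather than naive submultiplicativity) is essential.
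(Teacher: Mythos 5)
There is a genuine gap here, and it concerns \emph{which object} must be shown Lipschitz. Although the lemma's wording loosely says ``$Q_c(\gamma)$ is Lipschitz continuous,'' the constant $n^2b_{n2}b_{n1}^{3}b_{n0}^{-3}$ is consumed in the proof of Theorem \ref{Theorem:con} as the coefficient $\lambda$ in the Newton--Kantorovich condition $\rho=2\aleph\lambda\delta$ of Lemma \ref{pro:Newton:Kantovorich}, and that lemma requires Lipschitz continuity of the \emph{Jacobian} $Q_c^\prime(\gamma)$ in the sense defined just before it: $\|[Q_c^\prime(x)]v-[Q_c^\prime(y)]v\|_\infty\le\lambda\|x-y\|_\infty\|v\|_\infty$ for all $x,y\in D$. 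Accordingly, the paper's proof bounds the \emph{second} derivatives $\partial^2 Q_{c,k}/\partial\gamma_l\,\partial\gamma^T$ uniformly on $D$ and then applies the mean value theorem to the entries of $Q_c^\prime$. Your argument instead bounds the \emph{first} derivative $Q_c^\prime(\gamma)=H(\widehat{\beta}_\gamma,\gamma)$ and concludes that $Q_c$ itself is Lipschitz; that is one derivative short of what the lemma must deliver, and it cannot be fed into Lemma \ref{pro:Newton:Kantovorich}, so the two-stage Newton argument for Theorem \ref{Theorem:con} would have no Lipschitz coefficient to work with. The hard technical content of the paper's proof is exactly the piece your proposal omits: differentiating the implicit relation $F(\widehat{\beta}_\gamma,\gamma)=0$ a \emph{second} time to control $\partial^2\widehat{\beta}_\gamma/\partial\gamma_k\,\partial\gamma^T=O\big(b_{n2}b_{n1}^2/b_{n0}^3\big)$ (using $\partial\widehat{\beta}_\gamma/\partial\gamma^T=-V^{-1}\,\partial F/\partial\gamma^T=O(b_{n1}/b_{n0})$), and assembling these into $\big\|\partial^2 Q_{c,k}/\partial\gamma^T\partial\gamma_l\big\|=O\big(n^2b_{n1}^{3}b_{n2}/b_{n0}^{3}\big)$.

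A second, confirming symptom of the misreading is that your claimed estimate $\sup_{\gamma\in D}\|H(\widehat{\beta}_\gamma,\gamma)\|_\infty=O\big(n^2b_{n2}b_{n1}^{3}b_{n0}^{-3}\big)$ cannot arise from the pieces you assemble: every block of $H$ in \eqref{definition-H} --- $\partial Q/\partial\gamma^T$, $\partial Q/\partial\beta^T$, $V^{-1}$, $\partial F/\partial\gamma^T$ --- is built from the first derivative $\mu^\prime$ alone, so no factor $b_{n2}$ can appear; the order your own method (Lemma \ref{pro:inverse:appro} together with $v_{ii}\ge(n-1)b_{n0}$) actually produces is $O\big(n^2b_{n1}^2/b_{n0}\big)$. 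The presence of $b_{n2}$, the bound on $\mu^{\prime\prime}$ from \eqref{ineq-mu-keyb}, in the target coefficient is precisely the signal that one more differentiation is required than your argument performs. Your alternative route (bounding $\|\widehat{\beta}_\gamma-\widehat{\beta}_{\gamma^\prime}\|_\infty$ by subtracting the two defining identities) suffers from the same defect: it again yields only Lipschitz continuity of $Q_c$, not of $Q_c^\prime$.
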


\begin{lemma}
\label{lemma-asym-expansion-beta}
Write $\widehat{\beta}^*$ as $\widehat{\beta}_{\gamma^*}$
and $V=\partial F(\beta^*, \gamma^*)/\partial \beta^T$. $\widehat{\beta}^*$ has the following expansion:
\begin{equation}\label{result-lemma2}
\widehat{\beta}^* - \beta^* =  V^{-1} F(\beta^*, \gamma^*) +V^{-1}R,
\end{equation}
where $R=(R_1, \ldots, R_{n})^T$ is the remainder term and
\[
\left\| V^{-1}  R \right \|_\infty = O_p( \frac{b_{n2} h_n^2 \log n}{nb_{n0}^3}  ).
\]
\end{lemma}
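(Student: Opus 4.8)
The plan is to derive the expansion directly from the defining equation $F_{\gamma^*}(\widehat{\beta}^*)=0$ by a second-order Taylor expansion about the truth $\beta^*$. Since $V=\partial F(\beta^*,\gamma^*)/\partial\beta^T$ is precisely the Jacobian of $F_{\gamma^*}$ at $\beta^*$, expanding each coordinate $F_i(\cdot,\gamma^*)$ with a Lagrange remainder gives
\[
0=F_i(\widehat{\beta}^*,\gamma^*)=F_i(\beta^*,\gamma^*)+\big[V(\widehat{\beta}^*-\beta^*)\big]_i+\tfrac12(\widehat{\beta}^*-\beta^*)^T\big[\nabla^2 F_i(\bar\beta^{(i)})\big](\widehat{\beta}^*-\beta^*),
\]
where $\bar\beta^{(i)}$ lies on the segment joining $\beta^*$ and $\widehat{\beta}^*$. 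Collecting the quadratic terms into a vector $R$ (up to sign) and solving the linearized identity for $\widehat{\beta}^*-\beta^*$ produces the stated expansion $\widehat{\beta}^*-\beta^*=V^{-1}F(\beta^*,\gamma^*)+V^{-1}R$. The leading term is genuinely of order $n^{-1/2}$ (it is essentially $v_{ii}^{-1}(d_i-\E d_i)$ after the diagonal approximation $V^{-1}\approx S$), so the entire task reduces to showing the correction $V^{-1}R$ is of strictly smaller order.

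The key simplification is that the Hessian of each $F_i$ is extremely sparse. Writing $\delta=\widehat{\beta}^*-\beta^*$ and using that $\pi_{ij}$ is linear in $\beta$, the derivative formulas \eqref{eqn::temp-1} differentiate once more to give $\partial^2 F_i/\partial\beta_i^2=\sum_{j\neq i}\mu''(\pi_{ij})$, $\partial^2 F_i/\partial\beta_i\partial\beta_j=\partial^2 F_i/\partial\beta_j^2=\mu''(\pi_{ij})$, and all remaining second derivatives vanish. Consequently the quadratic form collapses into the clean expression $\sum_{j\neq i}\mu''(\bar\pi_{ij})(\delta_i+\delta_j)^2$, so that by \eqref{ineq-mu-keyb},
\[
|R_i|\le \tfrac12\sum_{j\neq i}|\mu''(\bar\pi_{ij})|(\delta_i+\delta_j)^2\le 2\,b_{n2}(n-1)\,\|\delta\|_\infty^2,
\]
whence $\|R\|_\infty=O\big(b_{n2}\,n\,\|\widehat{\beta}^*-\beta^*\|_\infty^2\big)$. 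Plugging in the sup-norm consistency rate $\|\widehat{\beta}^*-\beta^*\|_\infty=O_p\big(b_{n0}^{-1}h_n(\log n/n)^{1/2}\big)$ from Lemma \ref{lemma-a} gives $\|R\|_\infty=O_p\big(b_{n2}h_n^2\log n\,/\,b_{n0}^2\big)$.

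Finally I would bound the matrix factor: by Lemma \ref{lemma-tight-V}, since $V\in\mathcal{L}_n(b_{n0},b_{n1})$ we have $\|V^{-1}\|_\infty=O\big(1/(nb_{n0})\big)$. Combining via $\|V^{-1}R\|_\infty\le\|V^{-1}\|_\infty\,\|R\|_\infty$ yields exactly the claimed order $O_p\big(b_{n2}h_n^2\log n\,/\,(n b_{n0}^3)\big)$. I expect the only genuinely delicate point to be a validity check rather than a calculation: one must verify that the intermediate points $\bar\beta^{(i)}$ remain inside $B_\infty(\beta^*,\epsilon_{n1})$ so that the bound $b_{n2}$ on $|\mu''|$ legitimately applies. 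This is handled by convexity of the neighborhood together with the fact, supplied by Lemma \ref{lemma-a}, that $\widehat{\beta}^*$ itself lies in $B_\infty(\beta^*,\epsilon_{n1})$ with probability $1-O(n^{-1})$; the rest is the routine bookkeeping of multiplying the two $O_p$ bounds above. Using $\|V^{-1}\|_\infty$ as a single operator-norm factor, rather than estimating $V^{-1}R$ entrywise, is what keeps this last step clean.
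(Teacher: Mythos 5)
Your proposal is correct and follows essentially the same route as the paper's own proof: a second-order Taylor expansion of $F(\cdot,\gamma^*)$ at $\beta^*$, the sparse-Hessian bound $\|R\|_\infty = O\left(n\, b_{n2}\,\|\widehat{\beta}^*-\beta^*\|_\infty^2\right)$ combined with the consistency rate from Lemma \ref{lemma-a}, and finally $\|V^{-1}R\|_\infty \le \|V^{-1}\|_\infty\,\|R\|_\infty$ with $\|V^{-1}\|_\infty = O\left(1/(n b_{n0})\right)$ from Lemma \ref{lemma-tight-V}. Your use of per-coordinate intermediate points $\bar\beta^{(i)}$, the clean collapse of the quadratic form to $\sum_{j\neq i}\mu''(\bar\pi_{ij})(\delta_i+\delta_j)^2$, and the explicit check that the intermediate points stay in $B_\infty(\beta^*,\epsilon_{n1})$ are minor refinements of (and slightly more careful than) the paper's argument.
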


\begin{lemma}
\label{lemma-order-Q-beta}
Let $\Omega=\mathrm{Cov}(F(\beta^*, \gamma^*))$.
Let $\sigma_n^2 = n^2 \| (V^{-1}-S) \Omega (V^{-1}-S) \|_{\max}$.
For any $\beta \in B_\infty( \beta^*, \epsilon_{n1})$ and $\gamma \in B_\infty( \gamma^*, \epsilon_{n2})$,  we have
\[
 \| \frac{\partial Q (\beta, \gamma)}{ \partial \beta^T }
  (\widehat{\beta}^*-\beta^*) \|_\infty = O_p\left(  nb_{n1} \log n (\frac{b_{n2}h_n^2}{b_{n0}^{3}}+ \sigma_n) \right).
\]
Further, when $\Omega=-\partial F(\beta^*, \gamma^*)/\partial \beta^T$,  we have
\[
 \| \frac{\partial Q (\beta, \gamma)}{ \partial \beta^T }
  (\widehat{\beta}^*-\beta^*) \|_\infty = O_p\left(\frac{h_n^2b_{n1}b_{n2} n\log n}{b_{n0}^{3}} \right).
\]
\end{lemma}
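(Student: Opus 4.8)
The plan is to substitute the asymptotic expansion $\widehat\beta^*-\beta^* = V^{-1}F + V^{-1}R$ from Lemma~\ref{lemma-asym-expansion-beta} and then peel off the diagonal approximation $S$ of $V^{-1}$. Abbreviating $J:=\partial Q(\beta,\gamma)/\partial\beta^{T}$, whose $(\ell,k)$ entry is $\sum_{j\neq k}(z_{kj})_\ell\,\mu'(\pi_{kj})$ and hence obeys $|J_{\ell k}|\le (n-1)C_z b_{n1}$ by Assumption~\ref{assumption-a} and \eqref{ineq-mu-keya}, I would write
\[
J(\widehat\beta^*-\beta^*) = \underbrace{J S F}_{\mathrm{(I)}} + \underbrace{J(V^{-1}-S)F}_{\mathrm{(II)}} + \underbrace{J V^{-1}R}_{\mathrm{(III)}},\qquad F:=F(\beta^*,\gamma^*).
\]
Throughout I would exploit that $F_k=-\sum_{j\neq k}(a_{kj}-\E a_{kj})$ has mean zero, so any linear image $u^{T}F$ equals $-\sum_{k<j}(u_k+u_j)(a_{kj}-\E a_{kj})$, a weighted sum of the \emph{independent} centered edges.

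For the remainder (III) I would use only the induced bound $\max_\ell\sum_k|J_{\ell k}| = O(n^2 b_{n1})$ together with $\|V^{-1}R\|_\infty = O_p(b_{n2}h_n^2\log n/(n b_{n0}^3))$ from Lemma~\ref{lemma-asym-expansion-beta}, so that $\|\mathrm{(III)}\|_\infty = O_p(n b_{n1}b_{n2}h_n^2\log n/b_{n0}^3)$; this reproduces the first term of the target and is where the $\log n$ originates.

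The two stochastic terms must be controlled by \emph{variances}, not operator norms. For (I), the $\ell$-th coordinate is $-\sum_{k<j}w_{kj}(a_{kj}-\E a_{kj})$ with $w_{kj}=J_{\ell k}/v_{kk}+J_{\ell j}/v_{jj}$ and $|w_{kj}|\le 2C_z b_{n1}/b_{n0}$ (using $v_{kk}\ge (n-1)b_{n0}$); its variance is exactly $\sum_{k<j}w_{kj}^2\mathrm{Var}(a_{kj}) = O(n^2 b_{n1}^2 h_n^2/b_{n0}^2)$ by Assumption~\ref{assumption-b}, whence Chebyshev and a maximum over the fixed $p$ coordinates give $\|\mathrm{(I)}\|_\infty = O_p(n b_{n1}h_n/b_{n0})$, a lower-order contribution under the stated conditions. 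For (II), the definition of $\sigma_n$ is tailored to make a crude norm split tight: the variance of its $\ell$-th coordinate equals $J_{\ell,\cdot}(V^{-1}-S)\Omega(V^{-1}-S)J_{\ell,\cdot}^{T}\le \|J_{\ell,\cdot}\|_1^2\,\|(V^{-1}-S)\Omega(V^{-1}-S)\|_{\max} = O(n^4 b_{n1}^2)\cdot(\sigma_n^2/n^2)=O(n^2 b_{n1}^2\sigma_n^2)$, so Chebyshev gives $\|\mathrm{(II)}\|_\infty = O_p(n b_{n1}\sigma_n)$. Summing (I)--(III) yields the first claim, the $\log n$ multiplying $\sigma_n$ there being harmless slack.

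For the second display I would substitute the special value of $\sigma_n$ valid when $\Omega=-\partial F(\beta^*,\gamma^*)/\partial\beta^{T}=-V$ (the sign being immaterial to $\sigma_n$): Lemma~\ref{pro:inverse:appro} gives $\|V^{-1}-S\|_{\max}=O(b_{n1}^2/(n^2 b_{n0}^3))$, hence $\sigma_n^2=O(b_{n1}^2/b_{n0}^3)$, so the contribution $n b_{n1}\log n\,\sigma_n$ is absorbed into $n b_{n1}b_{n2}h_n^2\log n/b_{n0}^3$, leaving the stated bound. The main obstacle I anticipate is precisely the need to compute exact variances: bounding each term by an induced $\ell_\infty$-norm times a vector $\ell_\infty$-norm overcounts by a factor $\sqrt{n}$ in (I) (because the coordinates of $F$ are correlated while the underlying edges are independent), and only the specially scaled quantity $\sigma_n^2=n^2\|(V^{-1}-S)\Omega(V^{-1}-S)\|_{\max}$ makes the quadratic form in (II) land at the correct order. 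The remaining care is bookkeeping: confirming (III) supplies the $\log n$ on the $h_n^2$ term and that (I) is genuinely dominated.
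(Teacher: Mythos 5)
Your decomposition and the way you control each piece are essentially the paper's own proof. The paper writes $Q^\prime_{\beta,\ell}(\widehat{\beta}^*-\beta^*) = Q^\prime_{\beta,\ell}\,(SF + WF + V^{-1}R)$ with $W=V^{-1}-S$, bounds $Q^\prime_{\beta,\ell}V^{-1}R$ exactly as your (III) via Lemma \ref{lemma-asym-expansion-beta}, and controls $Q^\prime_{\beta,\ell}WF$ exactly as your (II): the variance $[Q^\prime_{\beta,\ell}]^T W\,\Omega\, W\, Q^\prime_{\beta,\ell} = O(n^2 b_{n1}^2\sigma_n^2)$ followed by Chebyshev. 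The only cosmetic divergence is (I), where the paper applies the sub-exponential concentration inequality to get $O_p(h_n n\log n)$ while you use a variance/Chebyshev bound $O_p(nb_{n1}h_n/b_{n0})$; either way, this term is absorbed into the final display only under an implicit relation among $b_{n0},b_{n1},b_{n2},h_n$ (roughly $b_{n1}b_{n2}h_n \gtrsim b_{n0}^3$), a leap the paper takes just as silently, so I do not count it against you.

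The one step that does not hold as you stated it is the derivation of the second display. From $\|V^{-1}-S\|_{\max} = O\big(b_{n1}^2/(n^2b_{n0}^3)\big)$ alone you cannot conclude $\sigma_n^2 = n^2\|(V^{-1}-S)\,V\,(V^{-1}-S)\|_{\max} = O(b_{n1}^2/b_{n0}^3)$: bounding the triple product entrywise using only $\|W\|_{\max}$ gives $|(WVW)_{ij}| \le \sum_{k,l}|W_{ik}||V_{kl}||W_{lj}| = O\big(n^2 b_{n1}\|W\|_{\max}^2\big) = O\big(b_{n1}^5/(n^2 b_{n0}^6)\big)$, i.e.\ $\sigma_n^2 = O(b_{n1}^5/b_{n0}^6)$, which is strictly weaker whenever $b_{n1}\gg b_{n0}$ and no longer collapses into the stated bound. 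The repair is the algebraic identity the paper uses: since $WV = I - SV$, one has $WVW = (V^{-1}-S) + (SVS - S)$, where $(SVS-S)_{ij} = (1-\delta_{ij})\,v_{ij}/(v_{ii}v_{jj}) = O\big(b_{n1}/(n^2b_{n0}^2)\big)$, so together with Lemma \ref{pro:inverse:appro} one indeed gets $\|WVW\|_{\max} = O\big(b_{n1}^2/(n^2b_{n0}^3)\big)$ and hence $\sigma_n^2 = O(b_{n1}^2/b_{n0}^3)$. With that identity inserted, your argument coincides with the paper's; everything else is sound.
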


Now we are ready to prove  Theorem \ref{Theorem:con}.

\begin{proof}[Proof of Theorem \ref{Theorem:con}]
We construct the Newton iterative sequence to show the consistency.
In view of Lemma \ref{pro:Newton:Kantovorich},
it is sufficient to demonstrate the Newton-Kantovorich conditions.
We set $\gamma^*$ as the initial point $\gamma^{(0)}$ and $\gamma^{(k+1)}=\gamma^{(k)} - [Q_c^\prime(\gamma^{(k)})]^{-1}Q_c(\gamma^{(k)})$.

By Lemma \ref{lemma-a}, with probability at least $1-O(n^{-1})$, we have
we have
\[
\| \widehat{\beta}_\gamma - \beta^* \|_\infty = O_p\left( \frac{ h_n }{b_{n0} }\sqrt{\frac{\log n}{n}} \right). %%:=O_p(\epsilon_n).
\]
This shows that $\widehat{\beta}_{\gamma^{(0)}}$ exists such that $Q_c(\gamma^{(0)})$ and $Q_c^\prime(\gamma^{(0)})$ are well defined.
%This also shows that in every iterative step, $\gamma^{(k+1)}$ exists as long as $\gamma^{(k)}$ exists.

Recall the definition of $Q_c(\gamma)$ and $Q(\beta, \gamma)$ in \eqref{definition-Q} and \eqref{definition-Qc}.
By Lemmas \ref{lemma-diff-F-Q} and \ref{lemma-order-Q-beta}, we have
\begin{eqnarray*}
\|Q_c(\gamma^*)\|_\infty  & \le &  \|Q(\beta^*, \gamma^*)\|_\infty + \|Q(\widehat{\beta}_{\gamma^*}, \gamma^*)-Q(\beta^*, \gamma^*)\|_\infty\\
&=& O_p\left(  nb_{n1} \log n (\frac{h_n^2b_{n2}}{b_{n0}^{3}}+ \sigma_n)\right).
\end{eqnarray*}
By Lemma \ref{lemma-Q-Lip}, $\lambda=n^2  b_{n1}^{3} b_{n2} b_{n0}^{-3} $.
Note that
$\aleph=\| [Q_c^\prime(\gamma^*)]^{-1} \|_\infty = O ( \kappa_n n^{-2})$.
Thus,
\[
\delta = \| [Q_c^\prime(\gamma^*)]^{-1} Q_c(\gamma^*) \|_\infty =  O_p\left(
 \frac{\kappa_n b_{n1} \log n }{ n }(\frac{h_n^2b_{n2}}{b_{n0}^{3}}+ \sigma_n) \right  ).
\]
 As a result, if equation \eqref{eq-theorema-ca} holds, then
\[
\rho=2\aleph \lambda \delta =
O_p\left( \frac{ \kappa_n^2 b_{n1}^4 b_{n2} \log n }{n b_{n0}^{3} }(\frac{h_n^2b_{n2}}{b_{n0}^{3}}+ \sigma_n) \right)=o_p(1).
\]
By Theorem \ref{pro:Newton:Kantovorich},
the limiting point of the sequence $\{\gamma^{(k)}\}_{k=1}^\infty$ exists, denoted by $\widehat{\gamma}$, and satisfies
\[
\| \widehat{\gamma} - \gamma^* \|_\infty = O_p(\delta).
\]
By Lemma \ref{lemma-a}, $\widehat{\beta}_{\widehat{\gamma}}$ exists, denoted by $\widehat{\beta}$, and
$( \widehat{\beta}, \widehat{\gamma})$ is the moment estimator.
It completes the proof.
\end{proof}

\setlength{\itemsep}{-1.5pt}
\setlength{\bibsep}{0ex}
\bibliography{reference3}
\bibliographystyle{apalike}

\newpage

\begin{center}
\Large{Supplementary material for ``Asymptotic theory in network models with covariates and a growing number of node parameters"}
\end{center}
\iffalse
\author{Qiuping Wang\thanks{ Both authors  contributed equally. School of Mathematics and Statistics, Zhaoqing University, Zhaoqing, 526061, China.
\texttt{Email:} qp.wang@mails.ccnu.edu.cn.}
\hspace{5mm}
Yuan Zhang*\thanks{Department of Statistics, The Ohio State University, Columbus, 43210, U.S.A.
\texttt{Email:} yzhanghf@stat.osu.edu}
\hspace{5mm}
Ting Yan\thanks{Department of Statistics, Central China Normal University, Wuhan, 430079, China.
\texttt{Email:} tingyanty@mail.ccnu.edu.cn.}
\\
 Zhaoqing University\\
 The Ohio State University \\
 Central China Normal University
}
\fi

Section \ref{section-simulation} contains details of simulation studies and the application to the Enran email data.
The proofs of Lemmas 4, 5 and 6 are given in Sections \ref{section-lemma4}, \ref{section-lemma5} and \ref{section-lemma6}, respectively.
Sections \ref{section-lemma7}, \ref{section-lemma8} and \ref{section-lemma9} contain the proofs of Lemmas 7, 8 and 9, respectively.
We present the proofs of Theorems 2 and 3 in Sections \ref{section-theorem2} and \ref{section-theorem3}, respectively.
The proof of equation (15) is in Section \ref{section-proof413}.
Section \ref{section-simpli} contains the detailed simplification calculations of the bias term $B_*$ in equation (19).
%Section \ref{section6} contains one Lemma for checking that the Poisson random variables are sub-exponential.
%Section \ref{section-table} contains the table fitted in the Enran email dataset.
%Here, $\|V\|_\infty$ is the maximum absolute row sum of a matrix, which is the matrix norm induced by the infinity
%norm $\|x\|_\infty$ on vectors in $R^n$.
%this paper, In and 1n denote the nn identity matrix and the n-dimensional column vector consisting of all
%ones, respectively.
The following inequalities in the main text are restated here, which will be used in the proofs repeatedly.
\begin{subequations}
\begin{gather}
\label{ineq-mu-keya}
b_{n0}\le \min_{i,j} |\mu^\prime(\pi_{ij})| \le \max_{i,j}|\mu^\prime(\pi_{ij})|\le b_{n1},  \\
\label{ineq-mu-keyb}
\max_{i,j}|\mu^{\prime\prime}(\pi_{ij})| \le b_{n2}, \\
\label{ineq-mu-keyc}
\max_{i,j}|\mu^{\prime\prime\prime}(\pi_{ij})| \le b_{n3}.
\end{gather}
\end{subequations}

\section{Simulation studies}
\label{section-simulation}

We set the parameter values to be a linear form, i.e.,
$\alpha_{i}^* = (i-1)L/(n-1)$ for $i=1, \ldots, n$.
We considered four different values for $L$ as $L\in \{0 , \log(\log n), (\log n)^{1/2}, \log n \}$.
By allowing $\alpha^*$ to grow with $n$, we intended to assess the asymptotic properties under different asymptotic regimes.
Each node had two covariates $X_{i1}$ and $X_{i2}$. Specifically,
$X_{i1}$ took values positive one or negative one with equal probability and $X_{i2}$ came from a $Beta(2,2)$ distribution.
All covariates were independently generated.
%%Each element of the $p$-dimensional node-specific covariate $X_i$ is independently generated from a $Beta(2,2)$ distribution.
The edge-level covariate $z_{ij}$ between nodes $i$ and $j$ took the form: $z_{ij}=(x_{i1}*x_{j1}, |x_{i2}-x_{j2}|)^\top$.
For the homophily parameter, we set  $\gamma^*=(0.5, 1)^\top$.
Thus, the homophily effect of the network is determined by a weighted sum of the similarity measures of the two covariates between two nodes.

By Corollary 5, %%\ref{corollary:poisson:central},
given any pair $(i,j)$, $\hat{\xi}_{i,j} = [\hat{\beta}_i-\hat{\beta}_j-(\beta_i^*-\beta_j^*)]/(1/\hat{v}_{i,i}+1/\hat{v}_{j,j})^{1/2}$
converges in distribution to the standard normality, where $\hat{v}_{i,i}$ is the estimate of $v_{i,i}$
by replacing $(\beta^*, \gamma^*)$ with $(\widehat{\beta}, \widehat{\gamma})$.
Therefore, we assessed the asymptotic normality of $\hat{\xi}_{i,j}$ using the quantile-quantile (QQ) plot.
Further, we also recorded the coverage probability of the 95\% confidence interval and the length of the confidence interval.
The coverage probability and the length of the confidence interval of $\widehat{\gamma}$ were also reported.
Each simulation was repeated $10,000$ times.

We did simulations with network sizes $n=100$ and $n=200$ and found that the QQ-plots for these two network sizes were similar.
Therefore, we only show the QQ-plots for $n=100$ to save space.
Further, the QQ-plots for $L=0$ and $L=\log(\log n)$  are similar.
Also, for $L=(\log n)^{1/2}$ and $L=\log n$, they are similar.
Therefore we only show those for $L=\log (\log n)$ and $L=\log n$  in Figure \ref{figure-qq}.
In this figure, the horizontal and vertical axes are the theoretical and empirical quantiles, respectively,
and the straight lines correspond to the reference line $y=x$.
In Figure \ref{figure-qq}, when $L=\log (\log n)$, the empirical quantiles coincide well with the theoretical ones.
When $L=(\log n)^{1/2}$, the empirical quantiles have a little derivation from the theoretical ones in the upper tail of the right bottom subgraph.
These figures show that there may be large space for improvement on the growing rate of $\| \beta\|_\infty$ in the conditions in Corollary 5. %%\ref{corollary:poisson:central}.

\begin{figure}[!htb]
\centering
\includegraphics[ width=3.5in, height=3in, angle=0]{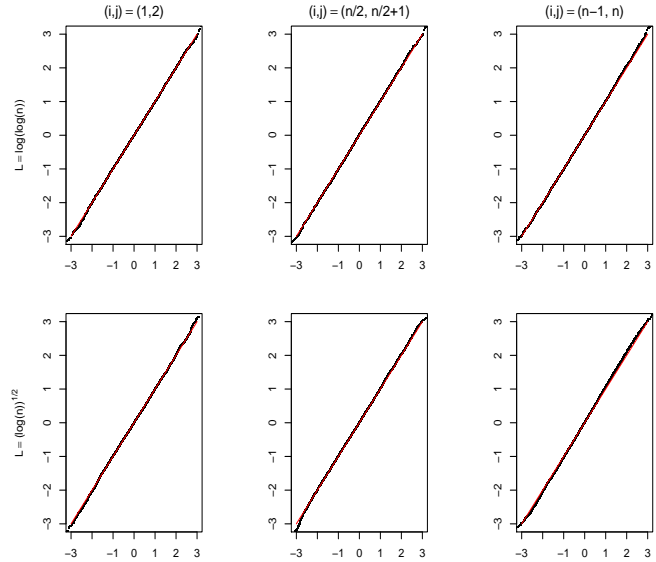}
\caption{The QQ plots of $\hat{\xi}_{i,j}$ (n=100). }
\label{figure-qq}
\end{figure}

Table \ref{Table:alpha} reports the coverage probability of the 95\% confidence interval for $\beta_i - \beta_j$ and the length of the confidence interval.
As we can see, the length of the confidence interval decreases as $n$ increases, which qualitatively agrees with the theory.
The coverage frequencies are all close to the nominal level $95\%$. On the other hand, the length of the confidence interval decreases as $L$ increases.
It seems  a little unreasonable. Actually,  the theoretical length of the $95\%$ confidence interval is $(1/v_{ii} + v_{jj})^{1/2}$ multiple by
a constant factor. Since $v_{ii}$ is a sum of a set of exponential items,  it becomes quickly larger as $L$ increases. As a result,
the length of confidence interval decreases as long as the estimates are close to the true values.  The simulated coverage probability results shows that
the estimates are very good.
So, this phenomenon that the length of confidence interval decreases in Table \ref{Table:alpha}, also agrees with the theory.

{\renewcommand{\arraystretch}{1}
\begin{table}[!h]\centering
\caption{The reported values are the coverage frequency ($\times 100\%$) for $\beta_i-\beta_j$ for a pair $(i,j)$ / the length of the confidence interval($\times 10$).}
\label{Table:alpha}
%\vskip5pt
\begin{spacing}{1}
\begin{tabular}{ccccccc}
\hline
n       &  $(i,j)$ & $L=0$ & $L=\log(\log n)$ & $L=(\log n)^{1/2}$ & $L=\log n$ \\
\hline
100         &$(1,2)   $&$94.56 / 4.60 $&$ 95.08 / 2.97 $&$ 94.80 / 2.42 $&$ 94.69 / 0.97  $ \\
            &$(50,51) $&$94.72 / 4.60 $&$ 94.93 / 2.04 $&$ 94.89 / 1.43 $&$ 94.83 / 0.31 $ \\
            &$(99,100)$&$95.12 / 4.60 $&$ 94.41 / 1.40 $&$ 94.38 / 0.85 $&$ 94.13 / 0.10$ \\
&&&&&&\\
200         &$(1,2)     $&$95.20 / 3.24 $&$ 94.79 / 2.01 $&$ 94.76 / 1.63 $&$ 95.09 / 0.52 $ \\
            &$(100,101) $&$95.03 / 3.24 $&$ 94.75 / 1.33 $&$ 94.91 / 0.92 $&$ 95.47 / 0.14 $ \\
            &$(199,200) $&$94.58 / 3.24 $&$ 95.05 / 0.88 $&$ 94.63 / 0.52 $&$ 93.90 / 0.04$ \\
\hline
\end{tabular}
\end{spacing}
\end{table}
}

Table \ref{Table:gamma} reports the coverage frequencies for the estimate $\widehat{\gamma}$
and bias corrected estimate  $\widehat{\gamma}_{bc}$ %% (= \widehat{\gamma}- \hat{\Sigma}^{-1} \hat{B}/\sqrt{n(n-1)})$
at the nominal level $95\%$,
and the standard error.
As we can see, the differences between the coverage frequencies with  uncorrected estimates and bias corrected estimates are very small.
All coverage frequencies are very close to the nominal level. The bias under the case of the Poisson distribution is very small in our simulation design.

\iffalse
different from the simulation results in the logistic model in \cite{Graham2017},
in which the coverage frequencies for the uncorrected estimate is far away the nominal level.
the coverage frequencies for the uncorrected estimate is visibly below the nominal level with at least $10$ percentage points
and the bias correction estimate dramatically improve the coverage frequencies, whose coverage frequencies are close to the nominal level when the MLE exists with a high frequency.
On the other hand, when $n$ is fixed, the average absolute bias of $\bs{\widehat{\gamma}}$ increases as $L$ becomes larger and so is the standard error.
\fi

{\renewcommand{\arraystretch}{1}
\begin{table}[!htbp]\centering
\caption{
The reported values are the coverage frequency ($\times 100\%$) for $\gamma_i$ for $i$ / length ($\times 10$) of confidence interval ($\bs{\gamma}^*=(0.5, 1)^\top$).
}
\label{Table:gamma}
%\vskip5pt
\begin{tabular}{cclllcc}
\hline
$n$     &   $\widehat{\gamma}$  & $L=0$ & $L=\log(\log n)$ & $L=(\log n)^{1/2}$ & $L=\log n$ \\
\hline
$100$   & $\hat{\gamma}_1$            &$95.13 / 0.52 $&$ 95.25 / 0.22 $&$ 94.92 / 0.15 $&$ 95.04 / 0.02 $ \\

        & $\hat{\gamma}_{bc, 1}$      &$95.11 / 0.52 $&$ 95.25 / 0.22 $&$ 94.92 / 0.15 $&$ 95.04 / 0.02 $ \\

        & $\hat{\gamma}_2$            &$94.98 / 3.08 $&$ 95.28 / 1.31 $&$ 95.00 / 0.88 $&$ 95.06 / 0.15  $ \\

        & $\hat{\gamma}_{bc, 2}$      &$94.93 / 3.08 $&$ 95.29 / 1.31 $&$ 95.02 / 0.88 $&$ 95.06 / 0.15  $\\

$200$   & $\hat{\gamma}_1$            &$94.87 / 0.26 $&$ 95.49 / 0.10 $&$ 95.07 / 0.07 $&$ 94.92 / 0.007 $ \\
        & $\hat{\gamma}_{bc, 1}$      &$94.87 / 0.26 $&$ 95.47 / 0.10 $&$ 95.08 / 0.07 $&$ 94.91 / 0.007 $\\

        & $\hat{\gamma}_2$            &$95.31 / 1.52 $&$ 95.12 / 0.59 $&$ 94.97 / 0.39 $&$ 94.49 / 0.041 $ \\
        & $\hat{\gamma}_{bc, 2}$      &$95.31 / 1.52 $&$ 95.12 / 0.59 $&$ 94.95 / 0.39 $&$ 94.49 / 0.041 $\\

\hline
\end{tabular}
\end{table}
}

\subsection{A real data example}
%%is valuable because it is one of the very few collections of organizational emails that are publicly available

%%Enron Email Dataset downloaded from : https://www.cs.cmu.edu/~enron/.
%%And it is the May 7, 2015 Version of dataset.

We use the Enron email dataset as an example analysis [\cite{Cohen2004}], available from \url{https://www.cs.cmu.edu/~enron/}.  %% (Cohen, 2009)
The Enron email data was acquired and made public by the Federal Energy Regulatory Commission during its investigation into fraudulent accounting practices.
The raw data is messy and needs to be cleaned before any analysis is conducted.
\cite{Zhou2007} applied data cleaning strategies to compile the Enron email dataset.
We use their cleaned data for the subsequent analysis.
The resulting data comprises $21,635$ messages sent between $156$ employees
with their covarites information. There are $6,650$ messages having more than one recipient across their `To', `CC' and
`BCC' fields, with a few messages having more than 50 recipients. For our analysis,
we exclude messages with more than ten recipients, which is a subjectively chosen cut-off that avoids
emails sent en masse to large groups. Each employee has three categorical variables: departments of these employees (Trading, Legal, Other),
the genders (Male, Female) and seniorities (Senior, Junior). Employees are labelled from $1$ to $156$.
The $3$-dimensional covariate vector $z_{ij}$ of edge $(i,j)$ is formed by using a homophilic matching function between these $3$ covariates of two employees $i$ and $j$, i.e.,
if $x_{ik}$ and $x_{jk}$ are equal, then $z_{ijk}=1$; otherwise $z_{ijk}=0$.

%%We define the covariate for each edge as a $3$-dimensional vector consisting of the matching function , where for categorical variables,
%%the difference is defined as the indicator whether they are equal. homophilic matching function.

For our analysis, we removed the employees  ``32" and ``37" with zero degrees, where the estimators of the corresponding node parameters do not exist.
This leaves a connected network with $154$ nodes.
The minimum, $1/4$ quantile, median, $3/4$ quantile and maximum values of $d$ are $1$, $95$, $220$, $631$ and $4637$, respectively.
It exhibits a strong degree heterogeneity. The estimators of $\alpha_i$ with their estimated standard errors are given in Table \ref{Table:alpha:real}.
%\ref{Table:alpha:real}.
The estimates of degree parameters vary widely:
from the minimum $-4.36$ to maximum $2.97$.
We then test three null hypotheses $\beta_2=\beta_3$, $\beta_{76}=\beta_{77}$ and $\beta_{151}=\beta_{154}$, using
the homogeneity test statistics $\hat{\xi}_{i,j} = |\hat{\beta}_i-\hat{\beta}_j|/(1/\hat{v}_{i,i}+1/\hat{v}_{j,j})^{1/2}$.
The obtained $p$-values turn out to be $1.7\times 10^{-24}$, $1.8\times 10^{-4}$ and $6.2\times 10^{-23}$, respectively,
confirming the need to assign one parameter to each node to characterize the heterogeneity of degrees.

{\renewcommand{\arraystretch}{1}
\begin{table}[!hbt]\centering
\scriptsize
%\vskip5pt
\caption{The estimates of $\beta_i$ and their standard errors in the Enron email dataset.}
\label{Table:alpha:real}
\begin{tabular}{cccc c cccc c cccc c cccc}
\hline
Node & $d_i$  &  $\hat{\beta}_i$ & $\hat{\sigma}_i$ & Node & $d_i$  &  $\hat{\beta}_i$ & $\hat{\sigma}_i$ &  Node & $d_i$  &  $\hat{\beta}_i$ & $\hat{\sigma}_i$
& Node & $d_i$  &  $\hat{\beta}_i$ & $\hat{\sigma}_i$ \\
\hline
1 &$ 723 $&$ 1.03 $&$ 0.37 $& 41 &$ 309 $&$ 0.15 $&$ 0.57 $& 79 &$ 309 $&$ -0.46 $&$ 0.79 $& 117 &$ 1176 $&$ 1.49 $&$ 0.29 $ \\
2 &$ 67 $&$ -1.36 $&$ 1.22 $& 42 &$ 281 $&$ 0.08 $&$ 0.6 $& 80 &$ 281 $&$ -0.08 $&$ 0.65 $& 118 &$ 398 $&$ 0.4 $&$ 0.5 $ \\
3 &$ 275 $&$ 0.03 $&$ 0.6 $& 43 &$ 690 $&$ 0.96 $&$ 0.38 $& 81 &$ 690 $&$ 0.32 $&$ 0.53 $& 119 &$ 369 $&$ 0.35 $&$ 0.52 $ \\
4 &$ 1202 $&$ 1.54 $&$ 0.29 $& 44 &$ 234 $&$ -0.13 $&$ 0.65 $& 82 &$ 234 $&$ 0.32 $&$ 0.52 $& 120 &$ 2673 $&$ 2.33 $&$ 0.19 $ \\
5 &$ 678 $&$ 0.94 $&$ 0.38 $& 45 &$ 704 $&$ 1 $&$ 0.38 $& 83 &$ 704 $&$ -1.45 $&$ 1.27 $& 121 &$ 571 $&$ 0.75 $&$ 0.42 $ \\
6 &$ 249 $&$ -0.07 $&$ 0.63 $& 46 &$ 952 $&$ 1.27 $&$ 0.32 $& 84 &$ 952 $&$ -0.74 $&$ 0.89 $& 122 &$ 2174 $&$ 2.15 $&$ 0.21 $ \\
7 &$ 375 $&$ 0.35 $&$ 0.52 $& 47 &$ 998 $&$ 1.38 $&$ 0.32 $& 85 &$ 998 $&$ 0.72 $&$ 0.43 $& 123 &$ 343 $&$ 0.26 $&$ 0.54 $ \\
8 &$ 40 $&$ -1.88 $&$ 1.58 $& 48 &$ 686 $&$ 0.99 $&$ 0.38 $& 86 &$ 686 $&$ -2.04 $&$ 1.71 $& 124 &$ 115 $&$ -0.8 $&$ 0.93 $ \\
9 &$ 428 $&$ 0.48 $&$ 0.48 $& 49 &$ 1224 $&$ 1.54 $&$ 0.29 $& 87 &$ 1224 $&$ -0.31 $&$ 0.71 $& 125 &$ 195 $&$ -0.29 $&$ 0.72 $ \\
10 &$ 95 $&$ -1.01 $&$ 1.03 $& 50 &$ 141 $&$ -0.63 $&$ 0.84 $& 88 &$ 141 $&$ -1.29 $&$ 1.16 $& 126 &$ 102 $&$ -0.96 $&$ 0.99 $ \\
11 &$ 231 $&$ -0.12 $&$ 0.66 $& 51 &$ 101 $&$ -0.95 $&$ 1 $& 89 &$ 101 $&$ -1.31 $&$ 1.17 $& 127 &$ 180 $&$ -0.4 $&$ 0.75 $ \\
12 &$ 31 $&$ -2.16 $&$ 1.8 $& 52 &$ 1 $&$ -5.57 $&$ 10 $& 90 &$ 1 $&$ 0.52 $&$ 0.48 $& 128 &$ 67 $&$ -1.39 $&$ 1.22 $ \\
13 &$ 85 $&$ -1.15 $&$ 1.08 $& 53 &$ 1138 $&$ 1.46 $&$ 0.3 $& 91 &$ 1138 $&$ 1.17 $&$ 0.35 $& 129 &$ 185 $&$ -0.38 $&$ 0.74 $ \\
14 &$ 53 $&$ -1.62 $&$ 1.37 $& 54 &$ 66 $&$ -1.41 $&$ 1.23 $& 92 &$ 66 $&$ 1.59 $&$ 0.28 $& 130 &$ 1798 $&$ 1.96 $&$ 0.24 $ \\
15 &$ 182 $&$ -0.36 $&$ 0.74 $& 55 &$ 155 $&$ -0.5 $&$ 0.8 $& 93 &$ 155 $&$ -1.02 $&$ 1.03 $& 131 &$ 3157 $&$ 2.5 $&$ 0.18 $ \\
16 &$ 26 $&$ -2.34 $&$ 1.96 $& 56 &$ 266 $&$ 0.02 $&$ 0.61 $& 94 &$ 266 $&$ -1.49 $&$ 1.3 $& 132 &$ 98 $&$ -0.96 $&$ 1.01 $ \\
17 &$ 702 $&$ 0.98 $&$ 0.38 $& 57 &$ 555 $&$ 0.76 $&$ 0.42 $& 95 &$ 555 $&$ 0.94 $&$ 0.38 $& 133 &$ 57 $&$ -1.5 $&$ 1.32 $ \\
18 &$ 182 $&$ -0.36 $&$ 0.74 $& 58 &$ 423 $&$ 0.47 $&$ 0.49 $& 96 &$ 423 $&$ -2.22 $&$ 1.86 $& 134 &$ 106 $&$ -0.93 $&$ 0.97 $ \\
19 &$ 122 $&$ -0.78 $&$ 0.91 $& 59 &$ 3715 $&$ 2.69 $&$ 0.16 $& 97 &$ 3715 $&$ -1.88 $&$ 1.58 $& 135 &$ 182 $&$ -0.39 $&$ 0.74 $ \\
20 &$ 4637 $&$ 2.97 $&$ 0.15 $& 60 &$ 298 $&$ 0.14 $&$ 0.58 $& 98 &$ 298 $&$ 0.79 $&$ 0.41 $& 136 &$ 79 $&$ -1.19 $&$ 1.13 $ \\
21 &$ 14 $&$ -2.96 $&$ 2.67 $& 61 &$ 1832 $&$ 1.97 $&$ 0.23 $& 99 &$ 1832 $&$ -1.96 $&$ 1.62 $& 137 &$ 676 $&$ 0.96 $&$ 0.38 $ \\
22 &$ 44 $&$ -1.8 $&$ 1.51 $& 62 &$ 65 $&$ -1.41 $&$ 1.24 $& 100 &$ 65 $&$ 0.31 $&$ 0.53 $& 138 &$ 2340 $&$ 2.23 $&$ 0.21 $ \\
23 &$ 135 $&$ -0.69 $&$ 0.86 $& 63 &$ 419 $&$ 0.46 $&$ 0.49 $& 101 &$ 419 $&$ -0.19 $&$ 0.67 $& 139 &$ 3 $&$ -4.5 $&$ 5.77 $ \\
24 &$ 826 $&$ 1.15 $&$ 0.35 $& 64 &$ 68 $&$ -1.37 $&$ 1.21 $& 102 &$ 68 $&$ -0.34 $&$ 0.72 $& 140 &$ 208 $&$ -0.2 $&$ 0.69 $ \\
25 &$ 135 $&$ -0.64 $&$ 0.86 $& 65 &$ 1159 $&$ 1.48 $&$ 0.29 $& 103 &$ 1159 $&$ -1.48 $&$ 1.3 $& 141 &$ 56 $&$ -1.56 $&$ 1.34 $ \\
26 &$ 668 $&$ 0.95 $&$ 0.39 $& 66 &$ 170 $&$ -0.45 $&$ 0.77 $& 104 &$ 170 $&$ -1.04 $&$ 1.03 $& 142 &$ 241 $&$ -0.08 $&$ 0.64 $ \\
27 &$ 644 $&$ 0.88 $&$ 0.39 $& 67 &$ 815 $&$ 1.13 $&$ 0.35 $& 105 &$ 815 $&$ -1.65 $&$ 1.39 $& 143 &$ 645 $&$ 0.88 $&$ 0.39 $ \\
28 &$ 20 $&$ -2.59 $&$ 2.24 $& 68 &$ 112 $&$ -0.87 $&$ 0.94 $& 106 &$ 112 $&$ -1.3 $&$ 1.19 $& 144 &$ 540 $&$ 0.71 $&$ 0.43 $ \\
29 &$ 190 $&$ -0.34 $&$ 0.73 $& 69 &$ 707 $&$ 0.99 $&$ 0.38 $& 107 &$ 707 $&$ -1.38 $&$ 1.21 $& 145 &$ 1080 $&$ 1.43 $&$ 0.3 $ \\
30 &$ 99 $&$ -0.97 $&$ 1.01 $& 70 &$ 33 $&$ -2.09 $&$ 1.74 $& 108 &$ 33 $&$ -1.32 $&$ 1.18 $& 146 &$ 67 $&$ -1.39 $&$ 1.22 $ \\
31 &$ 60 $&$ -1.47 $&$ 1.29 $& 71 &$ 136 $&$ -0.68 $&$ 0.86 $& 109 &$ 136 $&$ 1.12 $&$ 0.35 $& 147 &$ 440 $&$ 0.51 $&$ 0.48 $ \\
33 &$ 241 $&$ -0.11 $&$ 0.64 $& 72 &$ 788 $&$ 1.12 $&$ 0.36 $& 110 &$ 788 $&$ -0.95 $&$ 0.99 $& 148 &$ 165 $&$ -0.49 $&$ 0.78 $ \\
34 &$ 996 $&$ 1.35 $&$ 0.32 $& 73 &$ 179 $&$ -0.41 $&$ 0.75 $& 111 &$ 179 $&$ -1.07 $&$ 1.07 $& 149 &$ 588 $&$ 0.8 $&$ 0.41 $ \\
35 &$ 96 $&$ -0.98 $&$ 1.02 $& 74 &$ 720 $&$ 1 $&$ 0.37 $& 112 &$ 720 $&$ -0.03 $&$ 0.62 $& 150 &$ 38 $&$ -1.95 $&$ 1.62 $ \\
36 &$ 97 $&$ -1.02 $&$ 1.02 $& 75 &$ 313 $&$ 0.15 $&$ 0.57 $& 113 &$ 313 $&$ 1.21 $&$ 0.33 $& 151 &$ 1330 $&$ 1.65 $&$ 0.27 $ \\
38 &$ 564 $&$ 0.74 $&$ 0.42 $& 76 &$ 184 $&$ -0.38 $&$ 0.74 $& 114 &$ 184 $&$ -0.04 $&$ 0.62 $& 152 &$ 120 $&$ -0.81 $&$ 0.91 $ \\
39 &$ 711 $&$ 0.98 $&$ 0.38 $& 77 &$ 358 $&$ 0.32 $&$ 0.53 $& 115 &$ 358 $&$ -0.06 $&$ 0.65 $& 153 &$ 219 $&$ -0.21 $&$ 0.68 $ \\
40 &$ 202 $&$ -0.29 $&$ 0.7 $& 78 &$ 137 $&$ -0.64 $&$ 0.85 $& 116 &$ 137 $&$ -0.94 $&$ 0.99 $& 154 &$ 298 $&$ 0.1 $&$ 0.58 $ \\
155 &$ 82 $&$ -1.17 $&$ 1.1$& 156 &$ 480 $&$ 0.6 $&$ 0.46$ \\
\hline
\end{tabular}

\end{table}
}
The estimated covariate effects, their  bias corrected estimates, their standard errors, and their $p$-values under the null of having no effects are reported in Table \ref{Table:gamma:realdata}.
From this table, we can see that the estimates and bias corrected estimates are almost the same, indicating that
the bias effect is very small in the Poisson model and it corroborates the findings of simulations.
The variables ``department" and ``seniority" are significant while ``gender" is not significant.
This indicates that the gender has no significant influence on the formation of organizational emails.
The coefficient of variable ``department" is positive,
implying that a common value increases the probability of two employees in the same department to have more email connections.
On the other hand, the coefficient of variable ``seniority" is negative, indicating that
two employees in the same seniority have less emails than those with unequal seniorities. This makes sense intuitively.

{\renewcommand{\arraystretch}{1}
\begin{table}[h]\centering
%riptsize
%\vskip5pt
\caption{The estimators of $\gamma_i$, the corresponding bias corrected estimators, the standard errors, and the $p$-values under the null $\gamma_i=0$ ($i=1, 2, 3$) for Enron email data.}
\label{Table:gamma:realdata}
\begin{tabular}{ccc ccc cc c}
%\\
\hline
Covariate %& $\gamma$
&  $\hat{\gamma}_i$ & $\hat{\gamma}_{bc, i}$ & $\hat{\sigma}_i$ &$p$-value  \\
\hline
Department          %& $\gamma_1$
&  $ 0.167 $&$ 0.167 $&$ 1.13$ &$ <0.001$\\
Gender          %& $\gamma_2$
&  $-0.006 $&$ -0.006 $&$ 1.27$&$ 0.62$\\
Seniority       %& $\gamma_3$
&  $-0.203 $&$ -0.203 $&$ 1.09$&$ <0.001$\\
\hline
\end{tabular}
\end{table}
}

\section{Proof of Lemma 4}
\label{section-lemma4}

\begin{proof}[Proof of Lemma 4]
Recall that $\pi_{ij}=\beta_i + \beta_j + z_{ij}^\top \gamma$ and
\[
F_i(\beta, \gamma) =  \sum_{j\neq i} \mu_{ij}(\beta_i + \beta_j + z_{ij}^\top \gamma )-d_i , ~~ i=1, \ldots, n.
\]
The Jacobian matrix $F^\prime_\gamma(\beta)$ of $F_\gamma(\beta)$ can be calculated as follows.
By finding the partial derivative of $F_i$ with respect to $\beta$, for $i\neq j$ we have
\[
\frac{\partial F_i(\beta, \gamma) }{ \partial \beta_j} =  \mu_{ij}^\prime (\pi_{ij}), ~~ \frac{ \partial F_i(\beta, \gamma)}{ \partial \beta_i} =  \sum_{j\neq i} \mu_{ij}^\prime (\pi_{ij}),
\]
\[
\frac{\partial^2 F_i(\beta, \gamma) }{ \partial \beta_i \partial \beta_j} = \mu_{ij}^{\prime\prime} (\pi_{ij}),~~
\frac{ \partial^2 F_i(\beta, \gamma)}{\partial \beta_i^2} =  \sum_{j\neq i} \mu_{ij}^{\prime\prime} (\pi_{ij}).
\]
When $\beta\in B(\beta^*, \epsilon_{n1})$ and $\gamma \in B(\gamma^*, \epsilon_{n2})$, by inequality \eqref{ineq-mu-keyb},  we have
\[
\left|\frac{\partial^2 F_i(\beta, \gamma) }{ \partial \beta_i \partial \beta_j} \right|\le b_{n2},~~i\neq j.
\]
Therefore,
\begin{equation}
\label{inequ:second:deri}
\left| \frac{\partial^2 F_i(\beta, \gamma) }{\partial \beta_i^2} \right|\le (n-1)b_{n2},~~
\left| \frac{\partial^2 F_i(\beta, \gamma) }{\partial \beta_j\partial \beta_i} \right| \le b_{n2}.
\end{equation}

Let
\[
\mathbf{g}_{ij}(\beta)=(\frac{\partial^2 F_i(\beta, \gamma) }{ \partial \beta_1 \partial \beta_j}, \ldots,
\frac{\partial^2 F_i(\beta, \gamma) }{ \partial \beta_n \partial \beta_j})^\top.
\]
In view of \eqref{inequ:second:deri}, we have
\[
\|\mathbf{g}_{ii}(\beta)\|_1 \le 2(n-1)b_{n2},
\]
where $\|x \|_1=\sum_i |x_i|$ for a general vector $x\in \R^n $.
Note that when $i\neq j$ and $k\neq i, j$,
\[
\frac{\partial^2 F_i(\beta, \gamma) }{ \partial \beta_k \partial \beta_j} =0.
\]
Therefore, we have
$\|{g}_{ij}(\beta)\|_1 \le 2b_{n2}$, for $j\neq i$. Consequently, for vectors ${x},{y}, {v}\subset D$, we have
\begin{eqnarray*}
& & \| [F^\prime_{\gamma}({x})]v - [F^\prime_{\gamma}({y})] v \|_\infty \\
& \le & \max_i\{\sum_j [ \frac{\partial F_i}{\partial \beta_j }({x}, \gamma) - \frac{\partial F_i}{\partial \beta_j }({y}, \gamma)] v_j \} \\
& \le & \|{v}\|_\infty \max_i \sum_{j=1}^n |\frac{\partial F_i}{\partial \beta_j }
({x}, \gamma) - \frac{\partial F_i}{\partial \beta_j }({y}, \gamma) |  \\
& = & \|{v}\|_\infty \max_i \sum_{j=1}^n |\int_0^1 [{g}_{ij}(t{x}+(1-t){y})]^\top ({x}-{y})dt | \\
& \le & 4b_{n2}(n-1)\|{v}\|_\infty\|{x}-{y}\|_\infty.
\end{eqnarray*}
It completes the proof.
\end{proof}

\section{Proof of Lemma 5}
\label{section-lemma5}

To show this lemma, we need one preliminary result.
We first introduce the concentration inequality.
We say that a real-valued random variable $X$ is {\em sub-exponential} with parameter $\kappa > 0$ if
\begin{equation*}
\E[|X|^p]^{1/p} \leq \kappa p \quad \text{ for all } p \geq 1.
\end{equation*}
Note that if $X$ is a $\kappa $-sub-exponential random variable with finite first moment, then the centered random variable $X-\E[X]$ is also sub-exponential with parameter $2 \kappa $. This follows from the triangle inequality applied to the $p$-norm, followed by Jensen's inequality for $p \geq 1$:
\begin{equation*}
\big[ \E \big( \big|X-\E[X]\big|^p\big ) ]^{1/p}
%&\leq \E[|X|^p]^{1/p} + \E\big[\big|\E[X]\big|^p\big]^{1/p} \\
\leq [ \E\big(|X|^p \big )]^{1/p} + \big|\E[X]\big|
%\leq \E[|X|^p]^{1/p} + \E[|X|]
\leq 2[\E\big( |X|^p \big )]^{1/p}.
\end{equation*}
Sub-exponential random variables satisfy the following concentration inequality.

\begin{lemma}[\cite{vershynin_2012}, Corollary~5.17]\label{Thm:ConcIneqSubExp}
Let $X_1, \dots, X_n$ be independent centered random variables, and suppose each $X_i$ is sub-exponential with parameter $\kappa_i$. Let $\kappa = \max_{1 \leq i \leq n} \kappa_i$. Then for every $\epsilon \geq 0$,
\begin{equation*}
\P\left( \left|\frac{1}{n}  \sum_{i=1}^n (X_i- \E(X_i)) \right| \geq \epsilon \right) \leq
2\exp\left[-n\gamma \cdot  \min\Big(\frac{\epsilon^2}{\kappa^2}, \: \frac{\epsilon}{\kappa} \Big) \right],
\end{equation*}
where $\gamma > 0$ is an absolute constant.
\end{lemma}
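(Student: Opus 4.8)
The plan is to prove this Bernstein-type tail bound by the classical Cram\'er--Chernoff exponential method. The first step is to convert the moment-based definition of sub-exponentiality into a usable bound on the moment generating function (MGF). Specifically, I would show that if a centered $X$ satisfies $\E[|X|^p]^{1/p}\le \kappa p$ for all $p\ge 1$, then there exist absolute constants $c,C>0$ such that $\E[e^{\lambda X}]\le e^{C\kappa^2\lambda^2}$ whenever $|\lambda|\le c/\kappa$. This follows by Taylor-expanding the exponential, $\E[e^{\lambda X}]=1+\sum_{p\ge 2}\lambda^p\E[X^p]/p!$ (the $p=1$ term vanishes by centering), bounding $|\E[X^p]|\le (\kappa p)^p$, and then dominating the resulting series $\sum_{p\ge 2}(\kappa|\lambda| p)^p/p!$ by a convergent geometric series for $|\lambda|$ small, using $p^p/p!\le e^p$.

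The second step is the Chernoff bound applied to $S_n=\sum_{i=1}^n(X_i-\E X_i)$. By independence, for any $0<\lambda\le c/\kappa$,
\[
\P(S_n\ge n\epsilon)
\le e^{-\lambda n\epsilon}\prod_{i=1}^n\E\big[e^{\lambda(X_i-\E X_i)}\big]
\le \exp\big(-\lambda n\epsilon + nC\kappa^2\lambda^2\big),
\]
where each factor was controlled using Step 1 together with $\kappa_i\le\kappa$. The third step is to optimize the exponent over the admissible range $\lambda\in(0,c/\kappa]$. The unconstrained minimizer is $\lambda^\ast=\epsilon/(2C\kappa^2)$, which splits the analysis into two regimes. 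When $\epsilon$ is small (so that $\lambda^\ast\le c/\kappa$), substituting $\lambda^\ast$ yields the sub-Gaussian rate $\exp(-n\epsilon^2/(4C\kappa^2))$. When $\epsilon$ is large the constraint binds, and I would instead take $\lambda=c/\kappa$, which gives the sub-exponential rate $\exp(-c'n\epsilon/\kappa)$ after absorbing the quadratic term into the linear one. Taking the weaker of the two exponents reproduces the $\min(\epsilon^2/\kappa^2,\epsilon/\kappa)$ form. Finally, applying the identical argument to $-S_n$ and a union bound over the two tails accounts for the absolute value and produces the leading factor of $2$.

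The main obstacle is Step 1: extracting a clean MGF bound from the moment hypothesis while keeping the constants absolute and the admissible $\lambda$-range of the correct order $1/\kappa$. This is precisely the content of the equivalence-of-characterizations lemma for sub-exponential variables (Lemma 5.15 in \citet{vershynin_2012}); once it is in hand, Steps 2--4 are routine. A minor bookkeeping point is that the statement normalizes by $1/n$ and re-centers already-centered variables, but this is harmless and only rescales $\epsilon$ inside the Chernoff step.
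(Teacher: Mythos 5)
Your proof is correct. The paper does not prove this lemma at all---it imports it verbatim as Corollary~5.17 of \citet{vershynin_2012}---and your Cram\'er--Chernoff argument (converting the moment condition into an MGF bound $\E[e^{\lambda X}]\le e^{C\kappa^2\lambda^2}$ valid for $|\lambda|\le c/\kappa$, applying the Chernoff bound with independence, optimizing $\lambda$ over the sub-Gaussian and sub-exponential regimes, and closing with a union bound over the two tails) is precisely the standard proof given in that cited source, so your approach and the paper's coincide.
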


\begin{proof}[Proof of Lemma 5]
Recall that $a_{ij}-\E a_{ij}$, $1\le i<j \le n$, are independent and sub-exponential with respective parameters $h_{ij}$
and $\max_{i,j}h_{ij} \le h_n$. We set $\epsilon$ in Lemma \ref{Thm:ConcIneqSubExp} as
\begin{equation*}
\epsilon = h_n \left(\frac{2 \log (n-1)}{\gamma (n-1) } \right)^{1/2}.
\end{equation*}
Assume $n$ is sufficiently large such that $\epsilon/\kappa = \sqrt{2 \log (n-1) / \gamma (n-1)} \leq 1$.
By applying the concentration inequality in Theorem~\ref{Thm:ConcIneqSubExp}, we have
 for each $i = 1,\dots,n$,
\begin{equation*}
\begin{split}
\P\left(\frac{1}{n-1}|d_i - \E d_i | \geq h_n \left(\frac{2 \log (n-1)}{\gamma (n-1) } \right)^{1/2} \right)
&\leq 2\exp\left(- (n-1)\gamma   \cdot \frac{2 \log n}{\gamma (n-1) }\right) \\
&= \frac{2}{(n-1)^2}.
\end{split}
\end{equation*}
By the union bound,
\begin{eqnarray*}
&&\P\Bigg(\|d - \E d \|_\infty \ge h_n \sqrt{\frac{2}{\gamma}(n-1)\log(n-1)} \; \Bigg) \\
&\le & \sum_{i=1}^n \P\left(|d_i - \E d_i| \geq h_n \sqrt{\frac{2}{\gamma}(n-1)\log(n-1)} \right) \\
& \le &  \frac{2n}{(n-1)^2}.
\end{eqnarray*}
Similarly, we have
\[
\P \left( \left\| \sum_{i<j}z_{ij}(a_{ij}-\E a_{ij}) \right \|_\infty \ge h_n n\log n \right)  \le 1 - \frac{2p}{n}.
\]

\end{proof}

\section{Proof of Lemma 6}
\label{section-lemma6}

\begin{proof}[Proof of Lemma 6]
Note that $F'_{\gamma}(\beta) \in \mathcal{L}_n(b_{n0}, b_{n1})$
when $\beta\in B(\beta^*, \epsilon_{n1})$ and $\gamma \in B(\gamma^*, \epsilon_{n2})$,
 and $F_{\gamma}(\widehat{\beta}_{\gamma})$=0.
To prove this lemma, it is sufficient to show that the Kantovorich conditions for the function $F_{\gamma}(\beta)$ hold when
$D=B(\beta^*, \epsilon_{n1})$ and $\gamma\in B(\gamma^*, \epsilon_{n2})$, where $\epsilon_{n1}$ is a positive number and
$\epsilon_{n2}= o(b_{n1}^{-1}(\log n/n)^{1/2})$. %%under Conditions \ref{condition-diff-a}--\ref{condition:matrixclass-d}.
The following calculations are based on the event $E_n$:
\[
E_n = \{d: \max_i | d_i - \E d_i | = O( h_{n}(n\log n)^{1/2} ) \}.
\]
In the Newton iterative step, we set the true parameter vector $\beta^*$
as the starting point $\beta^{(0)}:=\beta^*$..

Let $V=(v_{ij})= \partial F_{\gamma}(\beta^*)/\partial \beta^\top$ and $S=\mathrm{diag}(1/v_{11}, \ldots, 1/v_{nn})$.
By Lemma 2, we have
$\aleph =\|V^{-1}\|_\infty = O( (nb_{n0})^{-1} )$.
Recall that $F_{\gamma^*}(\beta^*) = \E d -d $ and $\gamma\in B(\gamma^*, (\log n/n)^{1/2})$ and Assumption 1 holds,
Note that the dimension $p$ of $\gamma$  is a fixed constant.
If $\epsilon_{n2}=o( b_{n1}^{-1} (\log n)^{1/2} n^{-1/2})$, by the mean value theorem, we have
\begin{eqnarray*}
\| F_\gamma(\beta^*) \|_\infty & \le & \| d - \E d \|_\infty + \max_i | \sum\nolimits_{j\neq i} [\mu_{ij}(\beta^*, \gamma) - \mu_{ij}(\beta^*, \gamma^*)] |  \\
& \le & O( h_n(n\log n)^{1/2}) + \max_i \sum_{j\neq i} |\mu_{ij}^\prime( \beta^*, \bar{\gamma})| | z_{ij}^\top ( \gamma - \gamma^*)| \\
& = & O(h_n(n\log n)^{1/2}).
\end{eqnarray*}
Repeatedly utilizing  Lemma 2, we have
\begin{eqnarray*}
\delta=\| [F'_\gamma(\beta^*)]^{-1}F_\gamma(\beta^*) \|_\infty = \| [F'_\gamma(\beta^*)]^{-1}\|_\infty \|F_\gamma(\beta^*) \|_\infty
=O( \frac{h_n}{b_{n0}} \sqrt{\frac{\log n}{n}} )
\end{eqnarray*}
By Lemma 4, $F_\gamma(\beta)$ is Lipschitz continuous with Lipschitz coefficient $\lambda=4b_{n2}(n-1)$.
Therefore, if
\[
\frac{ b_{n2} h_{n} }{b_{n0}^2}=o(\sqrt{ \frac{n}{\log n} }),
\]
then
\begin{eqnarray*}
\rho =2\aleph \lambda \delta & = & O(\frac{1}{nb_{n0}})\times O( b_{n2}n )
\times O( \frac{ h_n }{b_{n0}}\sqrt{\frac{\log n}{n}} ) \\
& = & O\left( \frac{ b_{n2} h_{n} }{b_{n0}^2}\sqrt{ \frac{\log n}{n} } \right) =o(1).
\end{eqnarray*}
The above arguments verify the Kantovorich conditions.
By Lemma 3, it yields that
\[
\| \widehat{\beta}_\gamma - \beta^* \|_\infty = O\left( \frac{ h_n }{b_{n0} }\sqrt{\frac{\log n}{n}} \right).
\]
By Lemma 5, $P(E_n)\to 1$ such that the above equation holds with probability at least $1-O(n^{-1})$.
It completes the proof.
\end{proof}

\section{Proof of Lemma  7}
\label{section-lemma7}

\begin{proof}[Proof of Lemma 7]
Recall that
\[
Q_c(\gamma) = (Q_{c, 1}(\gamma), \ldots, Q_{c,p}(\gamma))^\top = \sum_{j<i} z_{ij}( \mu_{ij}(\hat{\beta}_\gamma, \gamma) - a_{ij}),
\]
and $Q_c^\prime(\gamma)$ is the Jacobian matrix of $Q_c(\gamma)$.

When causing no confusion, we write $Q_{c,k}(\gamma)$ as $Q_{c,k}$, $k=1, \ldots, p$.
Note that
\[
Q_{c,k}= \sum_{j<i} z_{ijk} (  \mu_{ij}(\hat{\beta}_\gamma, \gamma) - a_{ij} ).
\]
By finding the first order partial derivative of function $Q_{c,k}$ with respect to variable $\gamma_l$, we have
\[
\frac{\partial Q_{c,k} }{ \partial \gamma_l } =  \sum_{j<i} z_{ijk}\mu^\prime(\hat{\pi}_{ij}) \left(\frac{\partial \widehat{\beta}_{ \gamma, i}}{\partial \gamma_l } + \frac{ \partial \widehat{\beta}_{\gamma,j}}{\partial \gamma_l} + z_{ijl}\right),
\]
where $\hat{\pi}_{ij}=\widehat{\beta}_{ \gamma,i} + \widehat{\beta}_{ \gamma, j} + z_{ij}^\top \gamma$ and $\widehat{\beta}_{\gamma}=(\widehat{\beta}_{\gamma,1}, \ldots, \widehat{\beta}_{\gamma,n})^\top$.
Again, with the second order partial derivative, we have
\[
\frac{\partial^2 Q_{c,k} }{ \partial \gamma^\top \partial \gamma_l } =  \sum_{j<i} z_{ijk}
\mu^{\prime\prime}(\hat{\pi}_{ij})
\left(\frac{\partial \widehat{\beta}_{ \gamma,i}}{\partial \gamma^\top } + \frac{ \partial \widehat{\beta}_{ \gamma,j}}{\partial \gamma^\top} + z_{ij} \right)
 \left(\frac{\partial \widehat{\beta}_{\gamma,i}}{\partial \gamma_l } + \frac{ \partial \widehat{\beta}_{ \gamma,j}}{\partial \gamma_l} + z_{ijl}\right)
\]
\[
~~~~~~~ + z_{ijk}\mu^{\prime}(\hat{\pi}_{ij})
%%\frac{ e^{ \widehat{\beta}_{i, \gamma} + \widehat{\beta}_{j, \gamma} + z_{ij}^\top \gamma } }{ (1 + e^{ \widehat{\beta}_{i, \gamma} + \widehat{\beta}_{j, \gamma} + z_{ij}^\top \gamma })^2 }
 \left(\frac{\partial^2 \widehat{\beta}_{ \gamma,i}}{\partial \gamma^\top \partial \gamma_l } + \frac{ \partial^2 \widehat{\beta}_{ \gamma,j}}{\partial \gamma^\top \partial \gamma_l} \right).
\]
Recall that $\max_{i,j} \|z_{ij}\|_\infty =O(1) $ and
when $\beta \in B(\beta^*, \epsilon_{n1}), \gamma\in B(\gamma^*, \epsilon_{n2})$, we have:
\begin{equation}\label{ineq-mu-derive-b}
\max_{i,j}|\mu^\prime(\pi_{ij})|\le b_{n1}, ~~\max_{i,j}|\mu^{\prime\prime}(\pi_{ij})| \le b_{n2},~~ \max_{i,j}|\mu^{\prime\prime\prime}(\pi_{ij})| \le b_{n3}.
\end{equation}
So, we have
\begin{equation}
\label{equa:verify:c}
\| \frac{\partial^2 Q_{c,k} }{ \partial \gamma^\top \partial \gamma_l } \| =
  O \left( n^2 \left[ b_{n2} (  \| \frac{ \partial \widehat{\beta}_\gamma }{\partial \gamma^\top } \| )^2
+ b_{n1} \max_i \|  \frac{\partial^2 \widehat{\beta}_{ \gamma, i}}{\partial \gamma^\top \partial \gamma_l } \| \right]
\right).
\end{equation}
In view of \eqref{equa:verify:c}, to derive the upper bound of $\frac{\partial^2 Q_{c,k} }{ \partial \gamma^\top \partial \gamma_l }$, it is left to
bound $\frac{ \partial \widehat{\beta}_\gamma }{\partial \gamma^\top } $ and $\frac{\partial^2 \widehat{\beta}_{\gamma,i}}{\partial \gamma^\top \partial \gamma_l }$.

Recall that $F(\widehat{\beta}_\gamma, \gamma)=0$. With the derivative of function $F(\widehat{\beta}_\gamma, \gamma)$ on variable $\gamma$, we have
\begin{equation}\label{equ-verify-b}
\frac{ \partial F( \beta, \gamma)}{\partial \beta^\top}\bigg|_{\beta=\widehat{\beta}_\gamma, \gamma=\gamma} \frac{ \partial \widehat{\beta}_\gamma }{ \partial \gamma^\top } +
\frac{ \partial F( \beta, \gamma) }{ \partial \gamma^\top }\bigg|_{\beta=\widehat{\beta}_\gamma, \gamma=\gamma} =0.
\end{equation}
Thus, we have
\begin{equation}
\label{equ-one-beta-b}
\frac{ \partial \widehat{\beta}_\gamma }{ \partial \gamma^\top }
= - \left[\frac{ \partial F( \beta, \gamma)}{\partial \beta^\top}\bigg|_{\beta=\widehat{\beta}_\gamma, \gamma=\gamma} \right]^{-1}
\frac{ \partial F( \beta, \gamma) }{ \partial \gamma^\top }\bigg|_{\beta=\widehat{\beta}_\gamma, \gamma=\gamma}.
\end{equation}
To simplify notations, define
\[
V=(v_{ij})_{n\times n} : =  \frac{ \partial F( \beta, \gamma)}{\partial \beta}\bigg|_{\beta=\widehat{\beta}_\gamma, \gamma=\gamma},~~ W=(w_{ij})_{n\times n}:= V^{-1} - S,~~F:=F(\beta,\gamma),
\]
where $S=(s_{ij})_{n\times n}$ and $s_{ij}=\delta_{ij}/v_{ii}$.
Note that
\begin{equation}\label{equation-Fi-gamma}
\frac{ \partial F_i }{ \partial \gamma^\top }\bigg|_{\beta=\widehat{\beta}_\gamma, \gamma=\gamma} =  \sum_{j\neq i} z_{ij} \mu_{ij}^\prime(\widehat{\beta}_\gamma, \gamma ).
\end{equation}
By inequality \eqref{ineq-mu-derive-b}, we have
\begin{equation}
\label{eq-F-upper-bou}
\|\frac{ \partial F }{ \partial \gamma^\top }\bigg|_{\beta=\widehat{\beta}_\gamma, \gamma=\gamma} \|
\le \max_{i,k} \sum_{j\neq i} |\mu_{ij}^\prime (\widehat{\beta}_\gamma, \gamma ) | |z_{ijk} | = O( b_{n1} n).
\end{equation}
By combing \eqref{equ-one-beta-b} and \eqref{eq-F-upper-bou} and applying Lemma 2,
we have
\begin{equation}\label{eq-first-beta-upp}
\|\frac{ \partial \widehat{\beta}_\gamma }{ \partial \gamma^\top }\|_\infty
\le \|V\|_\infty \| \|\frac{ \partial F(\widehat{\beta}_\gamma, \gamma) }{ \partial \gamma^\top } \|_\infty
\le O(\frac{ 1}{nb_{n0}}) \cdot O( b_{n1}n) = O(\frac{ b_{n1}}{b_{n0}}).
\end{equation}

Next, we will evaluate $\frac{ \partial^2 \widehat{\beta}_\gamma }{ \partial \gamma_k \partial \gamma^\top  }$.
By \eqref{equ-verify-b}, we have
\[
\frac{\partial }{ \partial \gamma_k } \left[\frac{ \partial F}{\partial \beta^\top }\bigg|_{\beta=\widehat{\beta}_\gamma, \gamma=\gamma} \right]
\frac{ \partial \widehat{\beta}_\gamma }{ \partial \gamma^\top }
+ \left[\frac{ \partial F}{\partial \beta^\top}\bigg|_{\beta=\widehat{\beta}_\gamma, \gamma=\gamma} \right] \frac{ \partial^2 \widehat{\beta}_\gamma }{ \partial \gamma_k \partial \gamma^\top  }
+ \frac{\partial }{ \partial \gamma_k} \left[ \frac{\partial F }{  \partial \gamma^\top }\bigg|_{\beta=\widehat{\beta}_\gamma, \gamma=\gamma} \right] =0.
\]
It leads to that
\begin{eqnarray}
\nonumber
\frac{ \partial^2 \widehat{\beta}_\gamma }{ \partial \gamma_k \partial \gamma^\top  }
& = &  - V^{-1} \frac{\partial }{ \partial \gamma_k } \left[\frac{ \partial F}{\partial \beta^\top}\bigg|_{\beta=\widehat{\beta}_\gamma, \gamma=\gamma} \right]
\frac{ \partial \widehat{\beta}_\gamma }{ \partial \gamma^\top }
- V^{-1}\frac{\partial }{ \partial \gamma_k} \left[ \frac{\partial F }{  \partial \gamma^\top }\bigg|_{\beta=\widehat{\beta}_\gamma, \gamma=\gamma} \right]
\\
\label{equation-I1I2}
& := & - I_1 - I_2.
\end{eqnarray}
For $i\neq j$, we have
\[
\left(\frac{\partial F }{\partial \beta^\top}\bigg|_{\beta=\widehat{\beta}_\gamma, \gamma=\gamma}  \right)_{ij}=  \mu_{ij}^\prime (\widehat{\beta}_\gamma, \gamma),
\]
\[
\frac{\partial}{\partial \gamma_k}\left(\frac{\partial F }{\partial \beta^\top}\bigg|_{\beta=\widehat{\beta}_\gamma, \gamma=\gamma}  \right)_{ij}=  \mu_{ij}^{\prime\prime} (\widehat{\beta}_\gamma, \gamma)
( T_{ij}^\top \frac{\partial \widehat{\beta}_\gamma }{\partial \gamma_k} + z_{ijk}).
\]
Thus,
\begin{equation}\label{inequality-gc}
\left|\frac{\partial}{\partial \gamma_k}\left(\frac{\partial F }{\partial \beta^\top}\bigg|_{\beta=\widehat{\beta}_\gamma, \gamma=\gamma}  \right)_{ij} \right |
\le b_{n2}\left(2 \left\|\frac{ \partial \widehat{\beta}_\gamma }{ \partial \gamma^\top } \right\| +
\max_{i,j} \| z_{ij} \|_\infty \right).
\end{equation}
Note that
\[
\left(\frac{\partial F }{\partial \beta^\top}\bigg|_{\beta=\widehat{\beta}_\gamma, \gamma=\gamma}  \right)_{ii}=  \sum_{j\neq i} \mu_{ij}^\prime (\widehat{\beta}_\gamma, \gamma).
\]
By  \eqref{inequality-gc}, we have
\begin{equation}\label{inequality-gd}
\left|\frac{\partial}{\partial \gamma_k}\left(\frac{\partial F }{\partial \beta^\top}\bigg|_{\beta=\widehat{\beta}_\gamma, \gamma=\gamma}  \right)_{ii} \right |
\le (n-1)b_{n2}\left(2 \left\|\frac{ \partial \widehat{\beta}_\gamma }{ \partial \gamma^\top } \right\| + \max_{i,j} \| z_{ij} \|_\infty \right).
\end{equation}
For all $i=1, \ldots, n$ and $j=1, \ldots, p$, in view of \eqref{inequality-gc} and \eqref{inequality-gd}, we have
\begin{eqnarray*}
&&\left|\left\{\frac{\partial }{ \partial \gamma_k } \left[\frac{ \partial F}{\partial \beta^\top}\bigg|_{\beta=\widehat{\beta}_\gamma, \gamma=\gamma}\right ]
\frac{ \partial \widehat{\beta}_\gamma }{ \partial \gamma^\top } \right\}_{ij}
\right|
\\
& \le & \sum_{\ell=1}^n \left|\left\{\frac{\partial }{ \partial \gamma_k } \left[\frac{ \partial F}{\partial \beta}\bigg|_{\beta=\widehat{\beta}_\gamma, \gamma=\gamma}\right ] \right\}_{i\ell} \right|
\left| \left(\frac{ \partial \widehat{\beta}_\gamma }{ \partial \gamma^\top } \right)_{\ell j} \right|
\\
& \le & 2(n-1)b_{n2} \left\|\frac{ \partial \widehat{\beta}_\gamma }{ \partial \gamma^\top } \right\| \left(2 \left\|\frac{ \partial \widehat{\beta}_\gamma }{ \partial \gamma^\top } \right\| + \max_{i,j} \| z_{ij} \|_\infty\right)\\
& = & O( nb_{n2} \left\|\frac{ \partial \widehat{\beta}_\gamma }{ \partial \gamma^\top } \right\|^2 ).
\end{eqnarray*}
Thus,
\begin{eqnarray}
\nonumber
\| I_1 \| & = &
\left\|
V^{-1} \left\{ \frac{\partial }{ \partial \gamma_k} \left[ \frac{\partial F }{  \partial \gamma }\bigg|_{\beta=\widehat{\beta}_\gamma, \gamma=\gamma} \right]
\right\}
\left(\frac{ \partial \widehat{\beta}_\gamma }{ \partial \gamma^\top } \right)
\right\|
\\
\nonumber
& \le &  \|V^{-1}\|_\infty \times
\max_i \sum_{j=1}^p \left|\left\{\frac{\partial }{ \partial \gamma_k } \left[\frac{ \partial F}{\partial \beta^\top}\bigg|_{\beta=\widehat{\beta}_\gamma, \gamma=\gamma}\right ]
\frac{ \partial \widehat{\beta}_\gamma }{ \partial \gamma^\top } \right\}_{ij}
\right| \\
\label{equation-I1}
& = & O \left( \frac{b_{n2}}{b_{n0}}  \|\frac{\partial \widehat{\beta}_\gamma}{\partial \gamma_k}\|^2  \right).
\end{eqnarray}
Since
\[
\frac{ \partial F_i }{ \partial \gamma^\top }\bigg|_{\beta=\widehat{\beta}_\gamma, \gamma=\gamma}
=  \sum_{j\neq i} z_{ij} \mu_{ij}^\prime (\widehat{\beta}_\gamma, \gamma),
\]
we have
\[
\frac{\partial }{ \partial \gamma_k} \left[ \frac{\partial F_i }{  \partial \gamma^\top }\bigg|_{\beta=\widehat{\beta}_\gamma, \gamma=\gamma} \right]
=  \sum_{j\neq i} z_{ij} \mu_{ij}^{\prime\prime} (\widehat{\beta}_\gamma, \gamma) ( \frac{\partial \widehat{\beta}_\gamma}{\partial \gamma_k} + z_{ijk}),
\]
such that
\[
\left\|\frac{\partial }{ \partial \gamma_k} \left[ \frac{\partial F }{  \partial \gamma^\top }\bigg|_{\beta=\widehat{\beta}_\gamma, \gamma=\gamma} \right] \right\|_\infty
\le (n-1)b_{n2} (\max_{i,j} \|z_{ij}\|_\infty) ( \|\frac{\partial \widehat{\beta}_\gamma}{\partial \gamma_k}\| + \max_{i,j} \|z_{ij}\|_\infty).
\]
Consequently, we have
\begin{eqnarray}
\nonumber
\|I_2\|_\infty & = &  \| V^{-1} \frac{\partial }{ \partial \gamma_k } \left[\frac{ \partial F}{\partial \gamma^\top}\bigg|_{\beta=\widehat{\beta}_\gamma, \gamma=\gamma}\right ] \|_\infty  \\
\nonumber
& \le &  \|V^{-1}\|_\infty \|\frac{\partial }{ \partial \gamma_k } \left[\frac{ \partial F}{\partial \gamma^\top}\bigg|_{\beta=\widehat{\beta}_\gamma, \gamma=\gamma}\right ] \|_\infty
\\
& = & O( \frac{1}{nb_{n0}} + \frac{b_{n1}^2}{nb_{n0}^3} ) \times (n-1)b_{n2} \kappa_n ( \|\frac{\partial \widehat{\beta}_\gamma}{\partial \gamma_k}\| + \kappa_n)\\
\label{equation-I2}
& = & O\left( \frac{b_{n2}}{b_{n0}} \times   \|\frac{\partial \widehat{\beta}_\gamma}{\partial \gamma_k}\|_\infty   \right).
\end{eqnarray}
By combining \eqref{equation-I1I2}, \eqref{equation-I1} and \eqref{equation-I2}, it yields that
\begin{equation}
\label{eq-second-upper}
\| \frac{ \partial^2 \widehat{\beta}_\gamma }{ \partial \gamma_k \partial \gamma^\top  } \| = O \left( \frac{b_{n2}}{b_{n0}}  \|\frac{\partial \widehat{\beta}_\gamma}{\partial \gamma_k}\|^2  \right).
\end{equation}
Consequently, in view of  \eqref{equa:verify:c}, \eqref{eq-first-beta-upp} and \eqref{eq-second-upper}, we have
\begin{eqnarray}
\nonumber
\| \frac{\partial^2 Q_{c,k} }{ \partial \gamma^\top \partial \gamma_l } \|
 & = & O ( n^2b_{n2}  \|\frac{\partial \widehat{\beta}_\gamma}{\partial \gamma_k}\|^2
 + n^2 b_{n1} \cdot \frac{b_{n2}}{b_{n0}}\|\frac{\partial \widehat{\beta}_\gamma}{\partial \gamma_k}\|^2 \\
\label{eq-pro2-final}
& = &  O\left(\frac{n^2 b_{n1}^{3}b_{n2}}{b_{n0}^3}\right).
\end{eqnarray}

Note that
\begin{equation}\label{inequality-ga}
|\sum_{j=1}^p \{ [Q_c^\prime(x)]_{ij} - [Q_c^\prime(y)]_{ij} \} v_j | \le \| v \|_1 \max_{i,j} | [Q_c^\prime(x)]_{ij} - [Q_c^\prime(y)]_{ij} |.
\end{equation}
By the mean value theorem, we have
\begin{eqnarray}
\nonumber
|[Q_{c}^\prime(x)]_{k\ell} - [Q_{c}^\prime(y)]_{k\ell}|   & = & \left|\frac{ \partial[ Q_{c}^\prime (\gamma)]_{k\ell} }{ \partial \gamma^\top } \bigg|_{\gamma=t} (x-y) \right|
\\
\label{inequality-gb}
& \le &   \left\| \frac{\partial^2 Q_{c,k}(\gamma) }{ \partial \gamma^\top  \partial \gamma_l } \bigg|_{\gamma=t} \right\|_1 \|x-y\|_\infty,
\end{eqnarray}
where $t= \alpha x + (1-\alpha) y$ for some $\alpha\in(0,1)$.
By combining inequalities \eqref{inequality-ga}, \eqref{inequality-gb} and \eqref{eq-pro2-final}, we have
\begin{equation}
\label{check:Qc}
\| [Q_c^\prime (x)]v  - [Q_c^\prime (y)] v \|_\infty \le \lambda \| x - y \|_\infty \| v\|_\infty,
\end{equation}
where
\[
\lambda=O(n^2 b_{n1}^{3} b_{n2} b_{n0}^{-3} ).
\]
This completes the proof.
\end{proof}

\section{Proof of Lemma 8}
\label{section-lemma8}

\begin{proof}[Proof of Lemma 8]
Recall that $F_i(\beta^*, \gamma^*) = \sum_{j\neq i} ( \mu_{ij}(\beta^*, \gamma^*) -
a_{ij})$, $i=1, \ldots, n$.
By applying a second order Taylor expansion to $H(\widehat{\beta}_{\gamma^*}, \gamma^*)$, we have
\begin{equation}
\label{equ-lemma-gamma-b}
F(\widehat{\beta}_{\gamma^*}, \gamma^*)  = F(\beta^*, \gamma^*) +  \frac{\partial F(\beta^*, \gamma^*)}{\partial \beta^\top } (\widehat{\beta}^* - \beta^*)
+ \frac{1}{2} \left[\sum_{k=1}^{n-1} (\widehat{\beta}^*_k - \beta^*_k) \frac{\partial^2F(\bar{\beta}^*, \gamma^*)}{\partial \beta_k \partial \beta^\top} \right]\times (\widehat{\beta}^* - \beta^*),
\end{equation}
where $\bar{\beta}^*$ lies between $\widehat{\beta}^*$ and $\beta^*$. We evaluate the last term in the above equation row by row.
Its $\ell$th row is
\begin{equation}\label{definition-R}
R_\ell := \frac{1}{2} (\widehat{\beta}^* - \beta^*)^\top  \frac{\partial^2 F_\ell(\bar{\beta}^*, \gamma^*)}{\partial \beta \partial \beta^\top} (\widehat{\beta}^* - \beta^*),~~\ell=1, \ldots, n.
\end{equation}
A directed calculation gives that
\[
\frac{\partial^2 F_\ell(\bar{\beta}^*, \gamma^*)}{\partial \beta_i \partial \beta_j} =
\begin{cases}
 \sum_{t\neq i} \mu^{\prime\prime}(\bar{\pi}_{it}),  &   \ell =i=j  \\
\mu^{\prime\prime}(\bar{\pi}_{\ell j}), & \ell=i, i\neq j; \ell=j, i\neq j \\
0, & \ell \neq i \neq j.
\end{cases}
\]
By \eqref{ineq-mu-keyb},  we have
\begin{eqnarray*}
\max_{\ell=1, \ldots, n} 2|R_\ell| &  \le & \max_{\ell=1, \ldots, n} \sum_{1\le i<j \le n-1} |\frac{\partial^2 F_\ell(\bar{\beta}^*, \gamma^*)}{\partial \beta_i \partial \beta_j} |
\|\widehat{\beta}^* - \beta^*\|^2 \\
& \le & 2b_{n2}(n-1)\|\widehat{\beta}^* - \beta^*\|^2.
\end{eqnarray*}
By Lemma 6, we have that
\begin{equation}
\label{eq:rk}
\max_{\ell=1, \ldots, n} |R_\ell|
= O_p\left( \frac{b_{n2}h_n^2 \log n}{b_{n0}^2}  \right).
\end{equation}
Let $R=(R_1, \ldots, R_{n})^\top$ and $V=-\partial F(\beta^*, \gamma^*)/\partial \beta^\top$.
Since $H(\widehat{\beta}^*,\gamma^*)=0$,
by \eqref{equ-lemma-gamma-b}, we have
\begin{equation}\label{eq-expression-beta-star}
\widehat{\beta}^* - \beta^* =  V^{-1} F(\beta^*, \gamma^*) + V^{-1} R .
\end{equation}
Note that $V\in \mathcal{L}_n(b_{n0}, b_{n1})$.
By \eqref{eq:rk} and Lemma 1, we have
\begin{eqnarray*}
\|V^{-1}R \|_\infty & \le & \|V^{-1} \|_\infty\|R \|_\infty
 =  O_p( \frac{b_{n2} h_n^2 \log n}{nb_{n0}^3}  ).
\end{eqnarray*}
\end{proof}

\section{Proof of Lemma 9}
\label{section-lemma9}

\begin{proof}[Proof of Lemma 9]
For convenience, write
\[
\widehat{\beta}^* = \widehat{\beta}_{\gamma} , ~~V(\beta, \gamma)=-\frac{\partial F(\beta, \gamma)}{ \partial \beta^\top },~~
Q^\prime_{\beta,\ell}:= \frac{\partial Q_\ell (\beta, \gamma)}{ \partial \beta^\top },~\ell=1, \ldots, p.
\]
When evaluating functions $f(\beta, \gamma)$ on $(\beta, \gamma)$ at its true value $(\beta^*, \gamma^*)$,
we suppress the argument $(\beta^*, \gamma^*)$. This is, write $Q^\prime_{\beta,\ell}=Q^\prime_{\beta,\ell}(\beta^*, \gamma^*) $, etc.
Note that $V\in \mathcal{L}_n(b_{n0}, b_{n1})$. Let $W=V^{-1}-S$.
By Lemma 8, we have
\begin{equation}\label{eq-Q-identity}
- \frac{\partial Q_\ell (\beta^*, \gamma^*)}{ \partial \beta^\top } ( \widehat{\beta}^* - \beta^* )
= Q^\prime_{\beta,\ell} ( V^{-1} F + V^{-1} R )
= Q^\prime_{\beta,\ell} ( S F + WF +  V^{-1} R ).
\end{equation}
We will bound $Q^\prime_{\beta,\ell} S F$, $Q^\prime_{\beta,\ell}WF$ and $Q^\prime_{\beta,\ell}V^{-1} R$
in turn as follows.
Let $z_* = \max_{i,j} \|z_{ij}\|_\infty$.
A direct calculation gives
\[
Q^\prime_{\beta,\ell,i} =  \sum_{j=1, j\neq i}^n z_{ij \ell } \mu^\prime_{ij} ( \pi_{ij}^* ),
\]
such that
\begin{equation}\label{eq-Q-upper-bound}
|Q^\prime_{\beta,\ell,i}| \le (n-1) z_* b_{n1}.
\end{equation}
Thus, by Lemmas 2 and 8, we have
\begin{eqnarray}
\nonumber
|Q^\prime_{\beta,\ell} V^{-1} R | & \le & \sum_i |Q^\prime_{\beta,\ell,i}| \|V^{-1} R\|_\infty  \\
\label{eq-QVR-upper-bound}
& \le & n(n-1)  b_{n1} O_p( \frac{ b_{n2} h_n^2 \log n }{ nb_{n0}^3 } )= O_p( \frac{ nb_{n2}b_{n1} h_n^2 \log n }{ b_{n0}^3 }).
\end{eqnarray}
Then we bound $Q^\prime_{\beta,\ell} SF$. A direct calculation gives that
\begin{equation}\label{eq-QSH}
Q^\prime_{\beta,\ell} SF = \sum_{i=1}^{n} \frac{ Q^\prime_{\beta,\ell, i}}{v_{ii}}F_i=\sum_{i=1}^{n} c_iH_i,
\end{equation}
where
\[
c_i = \frac{ Q^\prime_{\beta,\ell, i}}{v_{ii}}, i=1, \ldots, n.
\]
It is easy to show that
\[
\max_{i=1, \ldots, n} |c_i| \le  \frac{  z_*  b_{n1}}{ b_{n0} }.
\]
By expressing  $Q^\prime_{\beta,\ell} SF$ as a sum of $a_{ij}$s, we have
\[
Q^\prime_{\beta,\ell} SF = 2 \sum_{1\le i < j \le n} c_i (\mu_{ij} - a_{ij}),
\]
Note $a_{ij}$ ($i<j$) is independent and bounded by $h_n z_*$.
By applying the concentration inequality for subexponential random variables to the above sum, we have
\begin{equation}
\label{eq-QSH-upper-bound}
| Q^\prime_{\beta,\ell} SF | = O_p( h_n n\log n ).
\end{equation}

Finally, we bound $Q^\prime_{\beta,\ell} WF$.
Let
\[
\sigma_n^2 = \max_{i,j} n^2 |(W^\top \mathrm{Cov} (F) W)_{ij}|.
\]
Therefore, by \eqref{eq-Q-upper-bound}, we have
\begin{eqnarray*}
\mathrm{ Var} ( Q^\prime_{\beta, \ell} W F ) & = & [ Q^\prime_{\beta, \ell}]^\top W^\top \mathrm{Cov}(F) W  Q^\prime_{\beta, \ell} \\
& = &  \sum_{i,j} Q^\prime_{\beta, \ell, i} (W^\top \mathrm{Cov}(F) W)_{ij} Q^\prime_{\beta, \ell,j}  \\
& = & O\left( n^2\times n^{-2} \sigma_n^2 \times  b_{n1}^2 n^2  \right) \\
& = & O( n^2  b_{n1}^2 \sigma_n^2 ).
\end{eqnarray*}
By  Chebyshev's inequality, we have
\[
\P (  |Q^\prime_{\beta, \ell} W H| > n  b_{n1}^2 \sigma_n (\log n)^{1/2} ) \le \frac{O( n^2 \sigma_n^2 b_{n1}^2 b_{n0}^{-3} ) }
{n^2  b_{n1}^2 b_{n0}^{-3} \sigma_n^2 \log n } \to 0.
\]
It leads to
\begin{equation}
\label{eq-QWH-upper-bound}
Q^\prime_{\beta, \ell} W H = O_p(n  b_{n1} \sigma_n(\log n)^{1/2}).
\end{equation}
By combining \eqref{eq-Q-identity}, \eqref{eq-QVR-upper-bound} \eqref{eq-QSH-upper-bound} , \eqref{eq-QWH-upper-bound},
it yields
\begin{eqnarray*}
&&\max_{\ell=1, \ldots, p} | Q^\prime_{\beta, \ell} (\widehat{\beta}^* - \beta^*) |\\
&=& O_p( \frac{ nb_{n2}b_{n1} h_n^2 \log n }{ b_{n0}^3 })
+ O_p( h_n  n\log n )
+ O_p(n  b_{n1} \sigma_n (\log n)^{1/2} ) \\
& = &  O_p\left(  nb_{n1} \log n (\frac{h_n^2b_{n2}}{b_{n0}^{3}}+ \sigma_n) \right).
\end{eqnarray*}

In the case of $V=\mathrm{Cov} (F)$, the equation \eqref{eq-QWH-upper-bound} could be simplified. Denote $W=V^{-1} -S$. Then we have
\[
\mathrm{Cov}(WF) = W^\top \mathrm{Cov} (F) W = (V^{-1} -S ) V (V^{-1} -S ) = V^{-1} -S  + SVS - S.
\]
A direct calculation gives that
\[
(SVS-S)_{ij} =  \frac{ ( 1-\delta_{ij})v_{ij}}{v_{ii}v_{jj}}.
\]
By Lemma 1, we have
\[
|(W^\top \mathrm{Cov} (F) W)_{ij}| = O\left( \frac{ b_{n1}^2 }{n^2 b_{n0}^3 } \right).
\]
Then, we have
\begin{equation*}
Q^\prime_{\beta, \ell} W F = O_p\left( \frac{ n b_{n1}^2 }{ b_{n0}^{3/2}} \right),
\end{equation*}
which leads to the simplification:
\[
\max_{\ell=1, \ldots, p} | Q^\prime_{\beta, \ell} (\widehat{\beta}^* - \beta^*) |
 =   O_p\left(   \frac{h_n^2b_{n1} b_{n2}n \log n}{b_{n0}^{3}} \right).
\]
It completes the proof.
\end{proof}

\section{Proofs for Theorem 2}
\label{section-theorem2}

\begin{proof}[Proof of Theorem 2]

To simplify notations, write  $\mu_{ij}^\prime = \mu^\prime(\beta_i^* + \beta_j^* + z_{ij}^\top \gamma^*)$ and
\[
V= \frac{ \partial F(\beta^*, \gamma^*)}{\partial \beta^\top}, ~~ V_{\gamma\beta} = \frac{ \partial F(\beta^*, \gamma^*)}{\partial \gamma^\top}.
\]
Let $\pi_{ij}^* = \beta_i^* + \beta_j^* + z_{ij}^\top \gamma^*$ and $\widehat{\pi}_{ij} = \widehat{\beta}_i + \widehat{\beta}_j + z_{ij}^\top \widehat{\gamma}$.
By a second order Taylor expansion, we have
\begin{equation}
\label{equ-Taylor-exp}
\mu( \widehat{\pi}_{ij} ) - \mu( \pi_{ij}^* )
= \mu_{ij}^\prime (\widehat{\beta}_i-\beta_i)+\mu_{ij}^\prime (\widehat{\beta}_j-\beta_j) + \mu_{ij}^\prime z_{ij}^\top ( \widehat{\gamma} - \gamma)
+ g_{ij},
\end{equation}
where
\[
g_{ij}= \frac{1}{2} \begin{pmatrix}
\widehat{\beta}_i-\beta_i^* \\
\widehat{\beta}_j-\beta_j^* \\
\widehat{\gamma} - \gamma^*
\end{pmatrix}^\top%%(\widehat{\beta}_i-\beta_i^*, \widehat{\beta}_j-\beta_j^*,  ( \widehat{\gamma} - \gamma^* )^\top  )
\begin{pmatrix}
\mu^{\prime\prime}_{ij}( \tilde{\pi}_{ij} ) & -\mu^{\prime\prime}_{ij}( \tilde{\pi}_{ij} )
& \mu^{\prime\prime}_{ij}( \tilde{\pi}_{ij} ) z_{ij}^\top \\
-\mu^{\prime\prime}_{ij}( \tilde{\pi}_{ij} ) & \mu^{\prime\prime}_{ij}( \tilde{\pi}_{ij} )
& -\mu^{\prime\prime}_{ij}( \tilde{\pi}_{ij} ) z_{ij}^\top \\
\mu^{\prime\prime}_{ij}( \tilde{\pi}_{ij} ) z_{ij}^\top
& -\mu^{\prime\prime}_{ij}( \tilde{\pi}_{ij} ) z_{ij}^\top & \mu^{\prime\prime}_{ij}( \tilde{\pi}_{ij} ) z_{ij}z_{ij}^\top
\end{pmatrix}
\begin{pmatrix}
\widehat{\beta}_i-\beta_i^* \\
\widehat{\beta}_j-\beta_j^* \\
\widehat{\gamma} - \gamma^*
\end{pmatrix},
\]
and $\tilde{\pi}_{ij}$ lies between $\pi_{ij}^*$ and $\widehat{\pi}_{ij}$.
By calculations, $g_{ij}$ can be simplified as
\begin{eqnarray*}
g_{ij} & = &  \mu^{\prime\prime}( \tilde{\pi}_{ij} ) [(\widehat{\beta}_i-\beta_i)^2 +  (\widehat{\beta}_j-\beta_j)^2 + 2(\widehat{\beta}_i-\beta_i)(\widehat{\beta}_j-\beta_j)]
\\
&& + 2\mu^{\prime\prime}( \tilde{\pi}_{ij} ) z_{ij}^\top ( \widehat{\gamma} - \gamma) (\widehat{\beta}_i-\beta_i+\widehat{\beta}_j-\beta_j) +
( \widehat{\gamma} - \gamma)^\top  \mu^{\prime\prime}( \tilde{\pi}_{ij} ) z_{ij}z_{ij}^\top( \widehat{\gamma} - \gamma)
\end{eqnarray*}
Recall that $z_*:= \max_{i,j} \| z_{ij} \|_\infty =O(1)$.
Note that $|\mu^{\prime\prime}(\pi_{ij})|\le b_{n2}$ when $\beta \in B(\beta^*, \epsilon_{n1})$ and $\gamma \in B(\gamma^*, \epsilon_{n2})$.
So we have
\begin{equation*}
\begin{array}{rcl}
|g_{ij}| & \le & 4b_{n2} \| \widehat{\beta} - \beta^*\|_\infty^2 + 2b_{n2}\| \widehat{\beta} - \beta^*\|_\infty \| \widehat{\gamma}-\gamma^* \|_1 \kappa_n + b_{n2} \| \| \widehat{\gamma}-\gamma^* \|_1^2 \kappa_n^2 \\
& \le & 2b_{n2}[4 \| \widehat{\beta} - \beta^*\|_\infty^2+  \| \widehat{\gamma}-\gamma^* \|_1^2 z_*^2].
\end{array}
\end{equation*}
Let  $g_i=\sum_{j\neq i}g_{ij}$, $g=(g_1, \ldots, g_n)^\top$.
If (4.11) in the main text holds %%\eqref{eq-theorema-ca}
and
\[
\eta_n:=\frac{  b_{n1}^2 \kappa_n^2 \log n}{n} (\frac{h_n^2 b_{n2}}{b_{n0}^3} + \sigma_n)^2 = o(1),
\]
then
\begin{equation}
\label{inequality-gij}
\max_{i=1, \ldots, n} |g_i| \le n\max_{i,j} |g_{i,j}| = O( \frac{ h_n^2 b_{n2} \log n}{ b_{n0}^2 } )
+  O_p(b_{n2} \eta_n \log n ) = O_p( \frac{ h_n^2 b_{n2} \log n}{ b_{n0}^2 }).
\end{equation}
Writing \eqref{equ-Taylor-exp} into a matrix form, it yields
\[
d - \E d = V(\widehat{\beta} - \beta^*) + V_{\gamma\beta} (\widehat{\gamma}-\gamma^*) + g,
\]
which is equivalent to
\begin{equation}
\label{expression-beta}
\widehat{\beta} - \beta^* = V^{-1}(d - \E d) + V^{-1}V_{\gamma\beta} (\widehat{\gamma}-\gamma^*) + V^{-1} g.
\end{equation}
We bound the last two remainder terms in the above equation as follows.
Let $W=V^{-1} - S$.
Note that $(S g)_i =  g_i/v_{ii}$ and $(n-1)b_{n0} \le v_{ii} \le (n-1)b_{n1}$.
By Lemma 2 in the main text, % \ref{lemma-tight-V},
we have
\begin{eqnarray}
\label{eq-Vg-upper}
\| V^{-1} g \|_\infty \le \|V^{-1}\|_\infty \|g \|_\infty
= O( \frac{1}{nb_{n0}} \times \frac{ h_n^2 b_{n2} \log n}{ b_{n0}^2 } ).
\end{eqnarray}

Note that the $i$th of $V_{\gamma\beta}$ is $\sum_{j=1,j\neq 1}^n \mu^\prime_{ij} z_{ij}^\top$.
So we have
\[
\| V_{\gamma\beta}(\widehat{\gamma}-\gamma^*) \|_\infty \le (n-1) z_*\|\widehat{\gamma}-\gamma^*\|_1
=  O_p( \kappa_n  b_{n1} \log n (\frac{b_{n2}h_n^2}{b_{n0}^{3}}+ \sigma_n)).
\]
By Lemma 2 in the main text, %\ref{pro:inverse:appro},
we have
\begin{equation}
\label{equ-theorem3-3}
\begin{array}{rcl}
\|V^{-1}V_{\gamma\beta} (\widehat{\gamma}-\gamma^*)\|_\infty    \le
\|V^{-1}\|_\infty \|V_{\gamma\beta} (\widehat{\gamma}-\gamma^*)\|_\infty
= O_p\left( \frac{\kappa_n  b_{n1} \log n}{nb_{n0}} (\frac{b_{n2}h_n^2}{b_{n0}^{3}}+ \sigma_n) \right).
\end{array}
\end{equation}
Since
$\max_i |(W\Omega W^\top)_{ii}| \le \sigma_n^2/n^2$,
we have
\begin{equation}
\label{equ-theorem3-dd}
\P( [W(d - \E d)]_i > \sigma_n \log n/n ) \le \frac{n^2}{\sigma_n^2(\log n)^2} |\mathrm{Var}\{[W(d - \E d)]_i\}| =\frac{1}{(\log n)^2}.
\end{equation}
Consequently, by combining \eqref{expression-beta}, \eqref{eq-Vg-upper}, \eqref{equ-theorem3-3} and \eqref{equ-theorem3-dd},
we have
\[
\widehat{\beta}_i - \beta^*_i = [S(d - \E d)]_i +  O_p( \frac{\kappa_n b_{n1}\log n}{ nb_{n0}} (\frac{b_{n2}h_n^2}{b_{n0}^{3}}+ \sigma_n)).
\]
It completes the proof.

\end{proof}

\section{Proof of Theorem 3}  %%~\ref{theorem-central-b}
\label{section-theorem3}

\begin{proof}[Proof of Theorem 3]  %%\ref{theorem-central-b}
Assume that the conditions in Theorem 1  %%ref{Theorem:con}
hold.
A mean value expansion gives
\[
 Q_c( \widehat{\gamma} ) - Q_c(\gamma^*) =  \frac{\partial Q_c(\bar{\gamma}) }{ \partial \gamma^\top }  (\widehat{\gamma}-\gamma^*),
\]
where $\bar{\gamma}$ lies between $\gamma^*$ and $\widehat{\gamma}$.
By noting that $Q_c( \widehat{\gamma} )=0$, we have
\[
\sqrt{N}(\widehat{\gamma} - \gamma^*) =
\Big[ \frac{1}{N}  \frac{\partial Q_c(\bar{\gamma}) }{ \partial \gamma^\top } \Big]^{-1}
\times \frac{Q_c(\gamma^*}{\sqrt{N}}). %%\times \Big[\frac{1}{\sqrt{N}}  \sum_{j< i} s_{\gamma_{ij}}(\widehat{\beta}(\gamma^*), \gamma^*) \Big].
\]
Note that the dimension of $\gamma$ is fixed. By Theorem 1 and (4.10) in the main text, %\ref{Theorem:con} and \eqref{equation-H-appro},
we have
\[
\frac{1}{N}  \frac{\partial Q_c(\bar{\gamma}) }{ \partial \gamma^\top }
\stackrel{p}{\to } \bar{H}:=\lim_{N\to\infty} \frac{1}{N}H(\beta^*, \gamma^*).
\]
Write $\widehat{\beta}^*$ as $\widehat{\beta}_{\gamma^*}$ for convenience. Therefore,
\begin{equation}\label{eq:theorem4:aa}
\sqrt{N} (\widehat{\gamma} - \gamma^*) =  \bar{H}^{-1} \times (-\frac{Q( \widehat{\beta}^*, \gamma^*)}{\sqrt{N}})   + o_p(1).
%% \Big[ \frac{1}{\sqrt{N}}  \sum_{j< i}s_{\gamma_{ij}} ( \widehat{\beta}^*, \gamma^*   )\Big] + o_p(1).
\end{equation}
By applying a third order Taylor expansion to $Q( \widehat{\beta}^*, \gamma^*)$, it yields
\begin{equation}\label{eq:gamma:asym:key}
-\frac{1}{\sqrt{N}}  Q(\widehat{\beta}^*, \gamma^*) = S_1 + S_2 + S_3,
\end{equation}
where
\begin{equation*}
\begin{array}{l}
S_1  = - \frac{1}{\sqrt{N}}  Q(\beta^*, \gamma^* )
- \frac{1}{\sqrt{N}}
\Big[\frac{\partial  Q(\beta^*, \gamma^* ) }{\partial \beta^\top } \Big]( \widehat{\beta}^* - \beta^* ), \\
S_2  =  - \frac{1}{2\sqrt{N}} \sum_{k=1}^{n} \Big[( \widehat{\beta}_k^* - \beta_k^* )
\frac{\partial^2 Q(\beta^*, \gamma^* ) }{ \partial \beta_k \partial \beta^\top }
\times ( \widehat{\beta}^* - \beta^* ) \Big],  \\
S_3  = - \frac{1}{6\sqrt{N}} \sum_{k=1}^{n} \sum_{l=1}^{n} \{ (\widehat{\beta}_k^* - \beta_k^*)(\widehat{\beta}_l^* - \beta_l^*)
\Big[   \frac{ \partial^3 Q(\bar{\beta}^*, \gamma^*)}{ \partial \beta_k \partial \beta_l \partial \beta^\top } \Big]
(\widehat{\beta}^*  - \beta^* )\},
\end{array}
\end{equation*}
and $\bar{\beta}^*=t\beta^*+(1-t)\widehat{\beta}^*$ for some $t\in(0,1)$.
Similar to the proof of Theorem 4 in \cite{Graham2017}, we will show that
(1) $S_2$ is the bias term having a non-zero probability limit; (2) $S_3$ is an asymptotically negligible
remainder term.

We first evaluate the term $S_3$.
We calculate $g_{ijk}=\frac{ \partial^3 Q (\beta, \gamma ) }{ \partial \beta_k \partial \beta_i \partial \beta_k }$ according to the indices $i,j,k$ as follows.
Observe that $g_{ijk}=0$ when $i,j,k$ are different numbers because $\mu_{ij}$ only has two arguments $\beta_i$ and $\beta_j$ and its third partial derivative on three different $\beta_i$, $\beta_j$ and $\beta_k$ is zero .
So there are only two cases below in which  $g_{ijk}\neq 0$. \\
(1) Only two values among three indices $i, j, k$ are equal.
If $k=i; i\neq j$,
$g_{ijk}=z_{ij}\frac{\partial^3 \mu_{ij}}{\partial \pi_{ij}^3}$; for other cases, the results are similar.\\
(2) Three values are equal.
$g_{kkk}=\sum_{i\neq k} z_{ki}\frac{\partial^3 \mu_{ki}}{\partial \pi_{ki}^3}$ . \\
Therefore, we have
\begin{eqnarray*}
S_3&=&\frac{1}{6\sqrt{N}}   \sum_{k, l, h}
\frac{ \partial^3 Q(\bar{\beta}^*, \gamma^*)}{ \partial \beta_k \partial \beta_l \partial \beta_h }(\widehat{\beta}_k^* - \beta_k^*)(\widehat{\beta}_l^* - \beta_l^*)
(\widehat{\beta}_h^* - \beta_h^*) \\
& = & \frac{1}{6\sqrt{N}}   \left \{\sum_{i<j}
 \frac{ \partial^3 Q(\bar{\beta}^*, \gamma^*)}{ \partial \beta_i^2 \partial \beta_j}(\widehat{\beta}_i^* - \beta_i^*)^2(\widehat{\beta}_j^* - \beta_j^*)
 +  \frac{ \partial^3 Q(\bar{\beta}^*, \gamma^*)}{ \partial \beta_j^2 \partial \beta_i}(\widehat{\beta}_j^* - \beta_j^*)^2(\widehat{\beta}_i^* - \beta_i^*)
 \right.
 \\
 &&
 \left.
 +\sum_i \frac{ \partial^3 Q(\bar{\beta}^*, \gamma^*)}{ \partial \beta_i^3 }(\widehat{\beta}_i^* - \beta_i^*)^3
 \right \}.
\end{eqnarray*}
So
\begin{eqnarray*}
\|S_3\|_\infty & \le &\frac{4}{3\sqrt{N}} \times \max_{i,j} \left\{|\frac{\partial^3 \mu_{ij}(\bar{\beta}^*, \gamma^*))}{\partial \pi_{ij}^3}| \| z_{ij} \|_\infty \right\}
\times \frac{n(n-1)}{2} \| \widehat{\beta}^* - \beta\|_\infty^3.
\end{eqnarray*}
By Lemma 6, %%\ref{lemma-a},
we have
\[
\|S_3\|_\infty =  O_p( \frac{b_{n3}h_{n}^3(\log n)^{3/2}}{n^{1/2}b_{n0}^3} ).
\]

Similar to the calculation in the derivation of the asymptotic bias in Theorem 4 in \cite{Graham2017}, we have
$S_2=B_*+o_p(1)$, where $B_*$ is defined at (4.12) in the main text. %%\eqref{defintion-Bias}.
\iffalse
\begin{equation}\label{definition:Bstar}
B_*=\lim_{n\to\infty} \frac{1}{2\sqrt{N}} \sum_{i=1}^n \frac{  \sum_{j\neq i} z_{ij} \mu^{\prime\prime}_{ij}(\pi_{ij}^*)   }
{  \sum_{j\neq i} \mu^{\prime}_{ij}(\pi_{ij}^*)},
\end{equation}
where $\pi_{ij}^*=\beta_i^* + \beta_j^* + z_{ij}^\top \gamma^*$.
\fi

Recall that $V= \partial F(\beta^*, \gamma^*)/\partial \beta^\top$ and
$V_{Q\beta}:=\partial Q(\beta^*, \gamma^*)/ \partial \beta^\top $.
By noting that
\[
d - \E d =  \sum_{1\le i<j<n} (a_{ij} - \E a_{ij}) T_{ij},
\]
we have
\[
-[Q(\beta^*, \gamma^*) - V_{Q\beta} V^{-1}( d - \E d)] = \sum_{1\le i<j \le n} (a_{ij}-\E a_{ij}) ( z_{ij} - V_{Q\beta} V^{-1} T_{ij} ).
\]
Similar to the calculation in the derivation of the asymptotic expression of $S_1$ in \cite{Graham2017}, we have
\[
S_1 = \frac{1}{\sqrt{N}} \sum_{j< i} s_{ij}(\beta^*, \gamma^*) + o_p(1),
\]
Therefore, it shows that equation \eqref{eq:gamma:asym:key} is equal to
\begin{equation}\label{eq:proof:4-a}
\frac{1}{\sqrt{N}} \sum_{j< i} s_{ij}( \widehat{\beta}^*, \gamma^* )
= \frac{1}{\sqrt{N}} \sum_{j< i} s_{ij}( \beta^*, \gamma^* ) + B_* + o_p(1),
\end{equation}
with $\frac{1}{\sqrt{N}} \sum_{i=1}^n \sum_{j\neq i} s_{\gamma_{ij}}^*( \beta^*, \gamma^* )$ equivalent to the first two terms in \eqref{eq:gamma:asym:key}
and $B_*$ the probability limit of the third term in \eqref{eq:gamma:asym:key}.

Substituting \eqref{eq:proof:4-a} into \eqref{eq:theorem4:aa} then gives
\[
\sqrt{N}(\widehat{\gamma}- \gamma^*) =  \bar{H}^{-1} B_* + \bar{H}^{-1} \times \frac{1}{\sqrt{N}}  \sum_{j< i}
s_{ij} (\beta^*, \gamma^*) + o_p(1).
\]
It completes the proof.
\end{proof}

\section{Proof of (15)} %% $\frac{1}{n^2} H(\beta, \gamma^*) =\frac{1}{n^2} H(\beta^*, \gamma^*) + o(1)$
\label{section-proof413}

Recall that $\pi_{ij}= z_{ij}^\top \gamma + \beta_i + \beta_j$, $\mu_{ij}(\pi_{ij})=\E a_{ij}$  and
$T_{ij}$ is an $n$-dimensional vector with $i$th and $j$th elements $1$ and other elements $0$.
By calculations, we have
\begin{eqnarray*}
\frac{ \partial Q(\beta, \gamma) }{ \partial \gamma^\top } =  \sum_{j<i} z_{ij} z_{ij}^\top  \mu_{ij}^{\prime}( \pi_{ij} ),
\\
\frac{ \partial Q(\beta, \gamma)}{ \partial \beta^\top} =  \sum_{j<i}   z_{ij} T_{ij}^\top \mu_{ij}^{\prime}( \pi_{ij}),
\\
\frac{\partial F(\beta, \gamma)}{ \partial \gamma^\top } =
\begin{pmatrix} \sum_{j\neq 1} z_{1j}^\top  \mu_{1j}^{\prime}( \pi_{1j} ) \\
\vdots \\
\sum_{j\neq n} z_{nj}^\top  \mu_{nj}^{\prime}( \pi_{nj} )
\end{pmatrix}.
\end{eqnarray*}
Note that
\[
H(\beta, \gamma^*) = \frac{ \partial Q(\beta, \gamma^*) }{ \partial \gamma^\top} - \frac{ \partial Q(\beta, \gamma^*) }{\partial \beta^\top} \left[ \frac{\partial F(\beta, \gamma^*)}{\partial \beta^\top} \right]^{-1}
\frac{\partial F(\beta, \gamma^*)}{\partial \gamma^\top }.
\]
To simplify notations, let
\[
A=\frac{ \partial Q(\beta, \gamma^*)}{ \partial \gamma^\top},~~B=\frac{\partial Q(\beta, \gamma^* ) }{ \partial \beta^\top },~~
V= \frac{ \partial F(\beta, \gamma^*) }{ \partial \beta^\top },~~D=\frac{ \partial F(\beta, \gamma^*) }{\partial \gamma^\top }.
\]
When emphasizing the arguments $\beta$ and $\gamma$, we write $A(\beta, \gamma^*)$ instead of $A$ and so on.
When $ \beta \in B( \beta^*, \epsilon_{n1})$, $V \in \mathcal{L}(b_{n0}, b_{n1})$.
Let $W=V^{-1}-S$, where $S=\mathrm{diag}(1/v_{11}, \ldots, 1/v_{nn})$.
Then we have
\[
H = A - B V^{-1} D = A - BSD - BWD.
\]
Recall that $z_*=\max_{i,j} \| z_{ij} \|_\infty$ and $\max_{i,j} | \mu_{ij}^{\prime}(\beta, \gamma^*) | \le b_{n1}$.
To simplify notations, we suppress the subscript ``$\max$" in the matrix maximum norm $\| \cdot \|_{\max}$ in this section.
It yields
\[
\|B\| \le n z_* b_{n1},~~~ \| D \| \le n  z_* b_{n1},
\]
such that
\begin{equation}\label{equation-v-remainder}
\|BWD\| = \max_{i,j} | B_{ik}W_{kl}D_{lj} | = O( \frac{b_{n1}^2}{n^2b_{n0}^3} ) \times n^2 z_*^2 b_{n1}^2 = O\left( \frac{b_{n1}^4 z_*^2 }{b_{n0}^3} \right).
\end{equation}

Now, we evaluate  $A(\beta, \gamma^*)-A(\beta^*, \gamma^*)$. By the mean value theorem, we have
\begin{eqnarray}
\nonumber
%%\| \frac{ \partial Q(\beta, \gamma^*)}{\partial \gamma} - \frac{\partial Q(\beta^*, \gamma^*)}{\partial \gamma } \|
\| A(\beta, \gamma^*) -  A(\beta^*, \gamma^*) \|
& = & \| \sum_{j\le i} z_{ij}z_{ij}^\top \left[ \frac{\partial \mu_{ij}(\beta, \gamma^*)}{\partial \pi_{ij}} - \frac{\partial \mu_{ij}(\beta^*, \gamma^*)}{\partial \pi_{ij}} \right] \| \\
\nonumber
& \le & \max_{i,j} \| z_{ij}z_{ij}^\top \| n^2 b_{n2} \| \beta - \beta^* \|_\infty \\
\label{verify-aa}
& \le & n^2 z_*^2  b_{n2} \| \beta - \beta^* \|_\infty.
\end{eqnarray}
Next, we evaluate $[BSD](\beta, \gamma^*)-[BSD](\beta^*, \gamma^*)$. Note that
\[
[BSD]_{ij}= \sum_{k, l} B_{ik} S_{kl} D_{lj} = \sum_{k=1}^n \frac{ B_{ik} D_{kj}}{ v_{kk} },
\]
\[
B_{kl}(\beta, \gamma^*)=\sum_{j\neq l} z_{ljk} \frac{ \partial \mu_{lj}(\beta, \gamma^*) }{\partial \pi_{lj} } ,
\]
\[
\frac{\partial B_{kl}(\beta, \gamma^*)}{\partial \beta} = \sum_{j\neq l} z_{ljk} \frac{ \partial^2 \mu_{lj}(\beta, \gamma^*) }{\partial \pi_{lj}^2 } T_{lj},
\]
\[
D_{kl}(\beta, \gamma^*)= \sum_{j\neq k} z_{kjl}\frac{ \partial \mu_{kj}(\beta, \gamma^*) }{ \partial \pi_{kj}},
\]
\[
\frac{ \partial D_{kl}(\beta, \gamma^*) }{ \partial \beta}= \sum_{j\neq k} z_{kjl} \frac{\partial^2 \mu_{kj}(\beta, \gamma^*)}{\partial \pi_{kj}^2 } T_{kj}.
\]
Since $|\mu^\prime(\pi_{ij})| \le b_{n1}$ and $|\mu^{\prime\prime}(\pi_{ij})| \le b_{n2}$,
we have
\begin{equation}
\label{equation-Bkl}
| B_{kl} (\beta, \gamma^*) | \le n\kappa_n b_{n1},~~ | D_{kl} (\beta, \gamma^*) | \le n z_* b_{n1},
\end{equation}
and for a vector $v$,
\begin{equation}
\label{equation-Bklv}
\begin{array}{rcl}
\| \frac{\partial B_{kl}(\beta, \gamma^*)}{\partial \beta^\top } v \| & \le & z_* b_{n2} [(n-1)|v_l| + \sum_{j\neq l} |v_j| ],
\\
\| \frac{ \partial D_{kl}(\beta, \gamma^*) }{ \partial \beta^\top} \| & \le & z_* b_{n2} [(n-1)|v_l| + \sum_{j\neq l} |v_j| ].
\end{array}
\end{equation}
By \eqref{equation-Bkl} and \eqref{equation-Bklv}, we have
\begin{equation}
\label{equation-verify-e}
|\frac{\partial [B_{ik}(\tilde{\beta}, \gamma^*)D_{kj}(\tilde{\beta}, \gamma^*)]}{\partial \beta^\top}v|
\le 2n z_*^2  b_{n1}b_{n2}[(n-1)|v_l| + \sum_{j\neq l} |v_j| ].
\end{equation}
It is easy to see that
\begin{equation}
\label{equation-verify-f}
|v_{kk}| \le \sum_{i\neq k} |\frac{ \partial \mu_{ik} }{ \partial \pi_{ik} }| \le (n-1)b_{n1},~~
\| \frac{ \partial v_{kk}}{\partial \beta^\top } \|_1 = \sum_{i\neq k} \|\frac{ \partial^2 \mu_{ik} }{ \partial \pi_{ik}^2 } T_{ik}\|_1
\le 2(n-1)b_{n2}
\end{equation}
By the mean value theorem, we have
\[
\frac{ B_{ik}(\beta, \gamma^*) D_{kj}(\beta, \gamma^*) }{ v_{kk}(\beta, \gamma^*) } - \frac{ B_{ik}(\beta^*, \gamma^*) D_{kj}(\beta^*, \gamma^*) }{ v_{kk}(\beta^*, \gamma^*) }
=f^\top(\tilde{\beta}, \gamma^*) (\beta - \beta^*),
\]
where $\tilde{\beta}$ lies between $\beta$ and $\beta^*$, and
\[
f(\beta, \gamma^*)= \frac{1}{v_{kk}^2(\tilde{\beta}, \gamma^*)}\left[\frac{\partial [B_{ik}(\tilde{\beta}, \gamma^*)D_{kj}(\tilde{\beta}, \gamma^*)]}{\partial \beta^\top}v_{kk}(\tilde{\beta}, \gamma^*)
- \frac{v_{kk}(\tilde{\beta}, \gamma^*)}{\partial \beta^\top}B_{ik}(\tilde{\beta}, \gamma^*)D_{kj}(\tilde{\beta}, \gamma^*)\right].
\]
By \eqref{equation-verify-e} and \eqref{equation-verify-f}, we have
\begin{eqnarray*}
&& |f^\top(\tilde{\beta}, \gamma^*) (\beta - \beta^*)|  \\
& \le & O\left(\frac{1}{(n-1)^2 b_{n0}^2 } \left\{[n^2\kappa_n^2  b_{n1}b_{n2} \| \beta - \beta^* \|_\infty  \times (n-1)b_{n1}
+ [n\kappa_n b_{n1}]^2 nb_{n2} \| \beta - \beta^* \|_\infty
\right\}\right) \\
& = & O( nb_{n1}^2 b_{n2} z_*^2 \|\beta - \beta^* \|_\infty b_{n0}^{-2} ) = O( n z_*^2 b_{n2} \|\beta - \beta^* \|_\infty \frac{b_{n1}^2}{b_{n0}^2} ).
\end{eqnarray*}
Consequently,
\begin{eqnarray}
\nonumber
&&|[BSD](\beta, \gamma^*)-[BSD](\beta^*, \gamma^*)| \\
\nonumber
& = & \sum_{k=1}^n |\left(\frac{ B_{ik}(\beta, \gamma^*) D_{kj}(\beta, \gamma^*)}{ v_{kk}(\beta, \gamma^*) }
- \frac{ B_{ik}(\beta, \gamma) D_{kj}(\beta, \gamma)}{ v_{kk}(\beta, \gamma) } \right)| \\
\label{inequality-verify-bb}
& \le & O( n^2 b_{n2}z_*^2 \|\beta - \beta^* \|_\infty b_{n1}^2b_{n0}^{-2} ).
\end{eqnarray}
By inequalities \eqref{equation-v-remainder}, \eqref{verify-aa} and \eqref{inequality-verify-bb}, if
\[
\frac{ b_{n1}^2}{b_{n0}^2} b_{n2}z_*^2 \|\beta - \beta^* \|_\infty = o(1),
\]
then
\[
\frac{1}{n^2} H(\beta, \gamma^*)_{ij} =\frac{1}{n^2} H(\beta^*, \gamma^*)_{ij} + o(1).
\]

\section{Simplifying expression of $B_*$ in (19)}
\label{section-simpli}
In the case of $V=\partial F(\beta^*, \gamma^*)/\partial \beta=\mathrm{Var}(d)$, $B_*$ can be simplified as follows.
Let $W=V^{-1} - S$. A direct calculation gives that
\[
  \sum_{k=1}^n \left[\frac{ \partial^2 Q(\beta^*, \gamma^*) }{ \partial \beta_k \partial \beta^\top}
 S  e_k \right] =
  \sum_{k=1}^n \frac{ \sum_{j\neq k} z_{kj} \mu_{kj}^{\prime\prime} (\pi_{ij}^*) }{ \sum_{j\neq k}  \mu_{kj}^{\prime} (\pi_{ij}^*) }.
\]
By Lemma 1, we have
\[
\sum_{k=1}^n \left[\frac{ \partial^2 Q_\ell(\beta^*, \gamma^*) }{ \partial \beta_k \partial \beta^\top}
 W  e_k \right] =  \sum_{j\neq k}z_{kj\ell} \mu^{\prime\prime}(\pi_{kj}^*) ( w_{kj} + w_{kn} )
 = O \left( \frac{ b_{n1}^2 b_{n2}}{ b_{n0}^3 n
 } \right).
\]
So, if $b_{n1}^2 b_{n2} b_{n0}^{-3} = o(n)$, then
\begin{equation}
\label{defintion-B-2}
B_* = \frac{1}{\sqrt{N}} \sum_{k=1}^n \left[\frac{ \partial^2 Q(\beta^*, \gamma^*) }{ \partial \beta_k \partial \beta^\top}
 V^{-1} e_k \right] = -\frac{1}{\sqrt{N}} \sum_{k=1}^n \frac{ \sum_{j\neq k} z_{kj} \mu_{kj}^{\prime\prime} (\pi_{ij}^*) }{ \sum_{j\neq k}  \mu_{kj}^{\prime} (\pi_{ij}^*) },
\end{equation}

\iffalse

\bibliography{reference3}

\bibliographystyle{apalike}
\addtolength{\itemsep}{-2 em}
\fi

\end{document}